\documentclass{amsart}
\usepackage{amsmath}
\usepackage{amsthm,amssymb}
\usepackage[unicode]{hyperref}
\hypersetup{
	colorlinks=true,
	linkcolor=black,
	citecolor=black,
	urlcolor=blue
}
\usepackage{xcolor}
\usepackage{mathtools}
\usepackage{enumitem}
\usepackage{xspace}
\usepackage{listings}
\usepackage{mathrsfs}

\lstdefinestyle{chattranscript}{
	basicstyle=\ttfamily\scriptsize,
	columns=fullflexible,
	keepspaces=true,
	showstringspaces=false,
	breaklines=true,
	breakatwhitespace=false,
	frame=single,
	xleftmargin=0.5em,
	xrightmargin=0.5em,
	aboveskip=0.8em,
	belowskip=0.8em,
	literate=
	{é}{{\'e}}1
	{ï}{{\"i}}1
	{ö}{{\"o}}1
	{–}{{--}}1
	{—}{{---}}1
	{“}{{``}}1
	{”}{{''}}1
	{’}{{'}}1
	{∎}{{$\square$}}1
}

\newtheorem{thm}{Theorem}[section]
\newtheorem{lem}[thm]{Lemma}
\newtheorem{prop}[thm]{Proposition}
\newtheorem{cor}[thm]{Corollary}

\theoremstyle{definition}
\newtheorem{defn}[thm]{Definition}
\newtheorem{rem}[thm]{Remark}
\newtheorem{fact}[thm]{Fact}

\numberwithin{equation}{section}

\newcommand{\bbq}{{\mathbb Q}}
\newcommand{\bbz}{{\mathbb Z}}
\newcommand{\bfpi}{{\boldsymbol \Pi}}
\newcommand{\bfsgm}{{\boldsymbol \Sigma}}
\newcommand{\bfdlt}{{\boldsymbol \Delta}}
\newcommand{\uphar}{\upharpoonright}
\newcommand{\Act}{\operatorname{Act}}
\newcommand{\Homeo}{\operatorname{Homeo}}
\newcommand{\Hom}{\operatorname{Hom}}
\newcommand{\NSub}{\operatorname{NSub}}
\newcommand{\Sub}{\operatorname{Sub}}
\newcommand{\STRP}{\ensuremath{\mathrm{STRP}}\xspace}

\title[STRP: Ascent, Complexity, and Obstructions]
{Strong Topological Rokhlin Property: Finite-Index Ascent,
	Descriptive Complexity, and Effective Obstructions}

\author{Jintao Luo}
\address{Independent Researcher, Chongqing, 400000, China}
\email{jintaoluo@foxmail.com}

\subjclass[2020]{Primary 03E15; Secondary 37B10, 37B51, 03D45, 20F65}
\keywords{strong topological Rokhlin property, generic Cantor action,
	subshift of finite type, projectively isolated subshift, Borel complexity,
	recursively presented group, virtually free group, Medvedev degree}

\date{}

\begin{document}
	
	\begin{abstract}
		We give a finite symbolic characterization of the strong topological Rokhlin property in terms of globally realizable finite pattern systems. We use this characterization to prove finite-index ascent: if $H\leq G$ has finite index, $G$ is finitely generated, and $H$ has \STRP, then $G$ has \STRP. Consequently every finitely generated virtually free group has \STRP, whereas among countable locally virtually free groups \STRP holds exactly for the finitely generated ones. In the compact coding space $\NSub(F_\omega)$ of countable groups, the \STRP locus belongs to $\bfpi^0_4$, is $\bfsgm^0_3$-hard; the locus of finitely generated virtually free groups is $\bfsgm^0_3$-complete. For quotients $F_\omega/N$ with $N$ recursively enumerable, every projectively isolated subshift has decidable finite tuple-language and contains an $N$-recursive configuration. This yields effective SFT obstructions to \STRP, including subgroup and direct-product obstructions, and implies that $\mathrm{SL}_n(\bbq)$ does not have \STRP for any $n\geq2$.
	\end{abstract}
	
	\maketitle
	\section{Globally realizable triples}
	
	Let $\mathfrak C=2^{\omega}$ be the Cantor space. Fix a compatible metric $d$ on $\mathfrak C$. The group $\Homeo(\mathfrak C)$ carries the Polish topology induced by
	\[
	d_{\mathrm H}(u,v)=\max\left\{\sup_{x\in\mathfrak C}d(u(x),v(x)),\ \sup_{x\in\mathfrak C}d(u^{-1}(x),v^{-1}(x))\right\}.
	\]
	For a countable group $G$, the space of continuous Cantor actions is
	$\Act_G(\mathfrak C)=\Hom(G,\Homeo(\mathfrak C))$, viewed as a closed subspace of $\Homeo(\mathfrak C)^G$. The conjugation action is given by $(u\cdot\alpha)(g)=u\alpha(g)u^{-1}$. A countable group $G$ has the \textbf{strong topological Rokhlin property}, abbreviated \STRP, if the conjugation action $\Homeo(\mathfrak C)\curvearrowright\Act_G(\mathfrak C)$ has a comeager orbit.
	
	We next fix the symbolic-dynamical terminology used throughout the paper. Let $A$ be a finite nonempty set, called an \textbf{alphabet}. The \textbf{full $G$-shift} is $A^G$ with the product topology and the left shift action
	$(\sigma_G(g)x)(h)=x(g^{-1}h)$. A \textbf{$G$-subshift} is a nonempty closed $G$-invariant subset of $A^G$. If $F\subseteq G$ is finite, an element of $A^F$ is an $F$-\textbf{pattern}. For a subshift $X\subseteq A^G$, its $F$-\textbf{language} is $X_F=\{x\uphar F:x\in X\}$.
	
	For a finite $F\subseteq G$ and $\mathcal F\subseteq A^F$, write
	\[
	[F,\mathcal F]_G=\{x\in A^G:(\sigma_G(g)x)\uphar F\in\mathcal F\text{ for every }g\in G\}.
	\]
	A subshift is a \textbf{subshift of finite type}, or an \textbf{SFT}, if it has the form $[F,\mathcal F]_G\neq\emptyset$ for some finite $F$ and $\mathcal F$. This allowed-pattern convention is equivalent to the customary finite forbidden-pattern definition.
	
	Let $\Sub_G(A)$ denote the space of nonempty $G$-subshifts over $A$. For $X\in\Sub_G(A)$ and finite $F\subseteq G$, define
	$\mathcal N_X^F=\{Y\in\Sub_G(A):Y_F=X_F\}$. The families $\mathcal N_X^F$ form a clopen basis of a compact zero-dimensional metrizable topology on $\Sub_G(A)$.
	
\begin{fact}\label{fact:normalization}
	Suppose that $X=[F,\mathcal F]_G$ is nonempty. Then $X=[F,X_F]_G$. If $K\supseteq F$ is finite, then $X=[K,X_K]_G$.
\end{fact}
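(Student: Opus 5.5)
Both claims come down to double inclusions that follow from the definitions of $X_F$ and $[F,\mathcal F]_G$. We also need the observation that the shift acts by homeomorphisms on $A^G$, so for every $x\in A^G$ and every $g\in G$, the point $\sigma_G(g)x$ lies in $X$ whenever $x\in X$.

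For the first claim, $X=[F,X_F]_G$, we argue as follows.

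First inclusion, $X\subseteq[F,X_F]_G$. Take $x\in X$ and $g\in G$. Since $X$ is $G$-invariant, $\sigma_G(g)x\in X$. Hence $(\sigma_G(g)x)\uphar F\in X_F$ by the definition of the $F$-language. This is exactly the condition for $x$ to lie in $[F,X_F]_G$.

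Reverse inclusion, $[F,X_F]_G\subseteq X$. Every $F$-pattern that occurs in $X$ is allowed by $\mathcal F$. Indeed, take $p\in X_F$, say $p=x\uphar F$ with $x\in X=[F,\mathcal F]_G$. Taking $g=e$ in the defining condition gives $p=(\sigma_G(e)x)\uphar F\in\mathcal F$. So $X_F\subseteq\mathcal F$. The operator $\mathcal F'\mapsto[F,\mathcal F']_G$ is monotone, so $[F,X_F]_G\subseteq[F,\mathcal F]_G=X$.

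For the second claim, $X=[K,X_K]_G$ with $K\supseteq F$ finite, we argue in the same way.

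First inclusion, $X\subseteq[K,X_K]_G$. This is the same invariance argument as before, with $K$ in place of $F$.

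Reverse inclusion, $[K,X_K]_G\subseteq X$. Let $y\in[K,X_K]_G$ and $g\in G$. Then $(\sigma_G(g)y)\uphar K\in X_K$, so $(\sigma_G(g)y)\uphar K=z\uphar K$ for some $z\in X$. Restricting to $F\subseteq K$ gives
\[
(\sigma_G(g)y)\uphar F=z\uphar F\in X_F\subseteq\mathcal F,
\]
where the last containment was shown above. Since $g$ was arbitrary, $y\in[F,\mathcal F]_G=X$.

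There is no genuine obstacle in this argument. The only step needing care is $X_F\subseteq\mathcal F$, which uses the identity element $g=e$. The nonemptiness hypothesis merely ensures that $X_F$ and $X_K$ are nonempty, so that the displayed sets are SFTs under the paper's convention.
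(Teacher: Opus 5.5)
Your proof is correct and is essentially the paper's argument: $X_F\subseteq\mathcal F$ by evaluating the defining condition at the identity, the forward inclusions by shift-invariance of $X$, and the reverse inclusion for $K$ by restricting $K$-patterns to $F$. Landing directly in $[F,\mathcal F]_G$ rather than in $[F,X_F]_G$ is only a cosmetic difference, but note that invariance of $X$ follows from the definition, since $(\sigma_G(h)\sigma_G(g)x)\uphar F=(\sigma_G(hg)x)\uphar F\in\mathcal F$ for all $h\in G$, not from the shift acting by homeomorphisms.
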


\begin{proof}
	Since $x\in X$ implies $x\uphar F\in\mathcal F$, one has $X_F\subseteq\mathcal F$. Thus $x\in[F,X_F]_G$ implies $(\sigma_G(g)x)\uphar F\in X_F\subseteq\mathcal F$ for every $g\in G$, so $x\in X$. Conversely, if $x\in X$, then $\sigma_G(g)x\in X$ for every $g\in G$, whence $(\sigma_G(g)x)\uphar F\in X_F$ for every $g\in G$. Thus $x\in[F,X_F]_G$.
	
	Let $K\supseteq F$ be finite. If $x\in X$, then $\sigma_G(g)x\in X$ for every $g\in G$, so $(\sigma_G(g)x)\uphar K\in X_K$ for every $g\in G$ and $x\in[K,X_K]_G$. Conversely, if $x\in[K,X_K]_G$, then for every $g\in G$,
	$(\sigma_G(g)x)\uphar K\in X_K$, and therefore
	$(\sigma_G(g)x)\uphar F=((\sigma_G(g)x)\uphar K)\uphar F\in X_F$.
	Thus $x\in[F,X_F]_G=X$.
\end{proof}

	\begin{defn}\label{def:globally-realizable}
		A triple $(A,F,\mathcal F)$ is \textbf{globally realizable over $G$} if $A$ is a finite alphabet, $F\subseteq G$ is finite, $[F,\mathcal F]_G$ is nonempty, and
		$([F,\mathcal F]_G)_F=\mathcal F$.
	\end{defn}
	
	Every locally allowed pattern in a globally realizable triple occurs in a global configuration. If $X\subseteq A^G$ is a nonempty subshift, then $(A,F,X_F)$ is globally realizable for every finite $F\subseteq G$: the inclusion $X\subseteq[F,X_F]_G$ gives $X_F\subseteq([F,X_F]_G)_F$, while the defining condition at the identity gives the reverse inclusion.
	
	A map $p:B\to A$ between finite alphabets induces the \textbf{one-block map} $p^G:B^G\to A^G$ defined by $p^G(y)(g)=p(y(g))$. A continuous $G$-equivariant map between subshifts is a \textbf{block map}, or a \textbf{morphism}; it is a \textbf{factor map} when it is onto. By the Curtis--Hedlund--Lyndon theorem, every morphism is determined by a finite local rule \cite[Corollary~2.9]{doucha2024}. A subshift is \textbf{sofic} if it is a factor of an SFT.
	
	\begin{defn}[{\cite[Definition~2.22]{doucha2024}}]\label{def:projectively-isolated}
		A subshift $X\in\Sub_G(A)$, where $|A|\geq2$, is \textbf{projectively isolated} if there are a finite alphabet $B$ with $|B|\geq2$, a subshift $Y\in\Sub_G(B)$, a finite set $F\subseteq G$, and a continuous $G$-equivariant map $\phi:Y\to X$ such that $ \{Z\in\mathcal N_Y^F:\phi[Z]=X\} $ has nonempty interior containing $Y$. For $Z\in\mathcal N_Y^F$, the notation $\phi[Z]$ means the image obtained by applying to $Z$ the same finite local rule that defines $\phi$ on $Y$; \cite[Lemma~2.21]{doucha2024} proves that the rule is defined on every such $Z$ after enlarging $F$ if necessary.
	\end{defn}
	
	SFTs are dense in every $\Sub_G(B)$. A projective-isolation witness can be recoded with a one-block factor map by \cite[Lemma~2.27]{doucha2024}.
	
	\begin{prop}\label{prop:projectively-isolated}
		Let $X\subseteq A^G$ be a subshift with $|A|\geq2$. The following are equivalent.
		\begin{enumerate}[label=(\arabic*),font=\upshape]
			\item The subshift $X$ is projectively isolated.
			\item There are a finite alphabet $B$, a finite $F\subseteq G$, a globally realizable triple $(B,F,\mathcal F)$, and a surjection $p:B\to A$ such that
			\begin{equation}\label{eq:s1-02}
				\forall Z\in\Sub_G(B)\ 
				[Z_F=\mathcal F\longrightarrow p^G(Z)=X].
			\end{equation}
		\end{enumerate}
	\end{prop}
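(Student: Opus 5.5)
We prove both implications, relying on the recoding lemma \cite[Lemma~2.27]{doucha2024}, on the density of SFTs in $\Sub_G(B)$, and on Fact~\ref{fact:normalization}.

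\emph{(1)$\Rightarrow$(2).} Start from a projective-isolation witness $(B,Y,F,\phi)$. By \cite[Lemma~2.27]{doucha2024} we may assume $\phi=p^G\uphar Y$ for a one-block map $p:B\to A$, and that the open set $\mathcal U=\{Z\in\mathcal N_Y^F:p^G(Z)=X\}^\circ$ contains $Y$. Choose a basic clopen neighbourhood $\mathcal N_Y^K\subseteq\mathcal U$ with $K\supseteq F$ finite. SFTs are dense, so there is an SFT $W\in\mathcal N_Y^K$. Write $W=[K',\mathcal F']_G$ and put $F^*=K\cup K'$. By Fact~\ref{fact:normalization}, $W=[F^*,W_{F^*}]_G$, and $(B,F^*,W_{F^*})$ is globally realizable. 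Now let $Z\in\Sub_G(B)$ satisfy $Z_{F^*}=W_{F^*}$. Restricting to $K$ gives $Z_K=W_K=Y_K$, so $Z\in\mathcal N_Y^K\subseteq\mathcal U$, and therefore $p^G(Z)=X$. Surjectivity of $p$ is not automatic, but it can be arranged. Since $p^G(Y)=X$, the image of $p$ contains every symbol occurring in $X$. If some symbol $a\in A$ does not occur in $X$, add a fresh symbol $b_a$ to $B$ with $p(b_a)=a$. Subshifts of $B^G$ that never use $b_a$ are exactly the subshifts over the old alphabet, so the condition $Z_{F^*}=W_{F^*}$ still excludes $b_a$ and \eqref{eq:s1-02} is preserved. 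We also need $|B|\ge2$, which holds because $|A|\ge2$ and $p$ is surjective.

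\emph{(2)$\Rightarrow$(1).} Set $Y=[F,\mathcal F]_G$. Global realizability gives $Y\neq\emptyset$ and $Y_F=\mathcal F$. Take $\phi=p^G\uphar Y$; this is continuous and equivariant, and \eqref{eq:s1-02} applied to $Z=Y$ gives $\phi[Y]=X$. The set $\mathcal N_Y^F=\{Z:Z_F=\mathcal F\}$ is clopen and contains $Y$. On every $Z\in\mathcal N_Y^F$ the one-block rule $p$ is defined, and \eqref{eq:s1-02} gives $p^G(Z)=X$. Hence $\{Z\in\mathcal N_Y^F:\phi[Z]=X\}$ equals the open set $\mathcal N_Y^F$, so it has nonempty interior containing $Y$. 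The definition requires $|B|\ge2$, and this follows from surjectivity of $p$ onto $A$ with $|A|\ge2$.

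\emph{Main obstacle.} The delicate point is in (1)$\Rightarrow$(2): we must make sure the one-block recoding of \cite[Lemma~2.27]{doucha2024} really delivers an isolation neighbourhood of the form $\mathcal N_Y^K$ on which $\phi[Z]$ is computed by $p$. In particular, the paper's convention of applying ``the same local rule'' to $Z$ has to agree with $p^G(Z)$ once the rule is one-block. A second point is that passing from $Y$ to the nearby SFT $W$ loses nothing, because $W$ lies in the same basic neighbourhood $\mathcal N_Y^K$.
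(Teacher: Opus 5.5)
Your proof is correct and follows the paper's argument essentially step for step: the one-block recoding via \cite[Lemma~2.27]{doucha2024}, a basic neighbourhood $\mathcal N_Y^K$ inside the isolating open set, a nearby SFT renormalized by Fact~\ref{fact:normalization}, and an enlargement of the alphabet to make $p$ onto (the paper enlarges the alphabet first, but the argument is the same). The one detail to add is that $F^*$ must be nonempty for $Z_{F^*}=W_{F^*}$ to exclude the fresh symbols $b_a$: if $F^*=\emptyset$, which can happen for a constant singleton $X$ with constant $p$, then \eqref{eq:s1-02} fails for $Z=\{b_a^G\}$; the paper avoids this by putting $1_G$ into the window and using $y(g)=(\sigma_G(g^{-1})y)(1_G)$, so simply choose $K\ni 1_G$.
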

	
	\begin{proof}
		Assume (2), and let $Y=[F,\mathcal F]_G$. Global realizability gives $Y_F=\mathcal F$, so \eqref{eq:s1-02} with $Z=Y$ gives $p^G(Y)=X$. Moreover, $Z\in\mathcal N_Y^F$ implies $Z_F=Y_F=\mathcal F$, and hence $p^G(Z)=X$ by \eqref{eq:s1-02}. Thus $\{Z\in\mathcal N_Y^F:p^G(Z)=X\}=\mathcal N_Y^F$, and Definition~\ref{def:projectively-isolated} applies to $p^G\upharpoonright Y:Y\to X$.
		
		Assume (1). By \cite[Lemma~2.27]{doucha2024}, choose a projective-isolation witness $Y_0\subseteq B_0^G$ for $X$ whose factor map is the one-block map induced by some $p_0:B_0\to A$. Thus there is an open neighborhood $\mathcal U$ of $Y_0$ such that $Z\in\mathcal U$ implies $p_0^G(Z)=X$. Choose a finite $K\subseteq G$ such that $\mathcal N_{Y_0}^K\subseteq\mathcal U$. By density of SFTs, choose an SFT $Y\in\mathcal N_{Y_0}^K$. Write $Y=[E,\mathcal E]_G$, let $F=E\cup K\cup\{1_G\}$, and define $\mathcal F=Y_F$. Fact~\ref{fact:normalization} gives $Y=[F,\mathcal F]_G$.
		
		Extend $B_0$, if necessary, to a finite alphabet $B\supseteq B_0$ and extend $p_0$ to a surjection $p:B\to A$. Regard $\mathcal F$ as a subset of $B^F$. Now work in alphabet $ B $. Since $\mathcal F\subseteq B_0^F$ and $1_G\in F$, if $y\in[F,\mathcal F]_G$, then $y(g)=(\sigma_G(g^{-1})y)(1_G)\in B_0$ for every $g\in G$. Thus $[F,\mathcal F]_G\subseteq B_0^G$, so $[F,\mathcal F]_G=Y$. Since $Y_F=\mathcal F \subseteq B^F$ as well, the triple $(B,F,\mathcal F)$ is globally realizable.
		
		Let $Z\in\Sub_G(B)$ satisfy $Z_F=\mathcal F$. Since $1_G\in F$ and $\mathcal F\subseteq B_0^F$, the same argument gives $Z\subseteq B_0^G$. Since $K\subseteq F$, one has $Z_K=(Z_F)\uphar K=\mathcal F\uphar K=Y_K=(Y_0)_K$. Thus $Z\in\mathcal N_{Y_0}^K\subseteq\mathcal U$, so $p^G(Z)=p_0^G(Z)=X$. This proves \eqref{eq:s1-02}.
	\end{proof}
	
	Doucha's symbolic characterization states that $G$ has \STRP exactly when projectively isolated subshifts are dense in $\Sub_G(A)$ for every finite alphabet $A$ with $|A|\geq2$ \cite[Theorem~3.1]{doucha2024}. The next theorem replaces the quantification over arbitrary subshifts and neighborhoods by finite data.
	
	\begin{thm}\label{thm:finite-characterization}
		Let $G$ be a countable group. The following are equivalent.
		\begin{enumerate}[label=(\arabic*),font=\upshape]
			\item The group $G$ has \STRP.
			\item For every globally realizable triple $(A,E,\mathcal E)$ with $|A|\geq2$, there exist a globally realizable triple $(B,F,\mathcal F)$ and a surjection $p:B\to A$ such that
			\begin{equation}\label{eq:s1-03}
				\bigl(p^G([F,\mathcal F]_G)\bigr)_E=\mathcal E
			\end{equation}
			and
			\begin{equation}\label{eq:s1-04}
				\forall Z\in\Sub_G(B)\
				[Z_F=\mathcal F\longrightarrow
				p^G(Z)=p^G([F,\mathcal F]_G)].
			\end{equation}
		\end{enumerate}
	\end{thm}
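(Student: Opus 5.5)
We route both directions through Doucha's theorem \cite[Theorem~3.1]{doucha2024}, which says that (1) holds exactly when projectively isolated subshifts are dense in $\Sub_G(A)$ for every finite $A$ with $|A|\geq2$. Proposition~\ref{prop:projectively-isolated} then converts projective isolation into the finite data of a globally realizable triple together with condition \eqref{eq:s1-02}. Condition (2) is essentially this finite form of density, read against the basic clopen sets $\mathcal N_X^E$.

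For (1)$\Rightarrow$(2), let $(A,E,\mathcal E)$ be globally realizable with $|A|\geq2$, and put $X_0=[E,\mathcal E]_G$. Global realizability gives $(X_0)_E=\mathcal E$. The set $\mathcal N_{X_0}^E$ is a nonempty open subset of $\Sub_G(A)$, so by density it contains a projectively isolated $X$, and therefore $X_E=\mathcal E$. Proposition~\ref{prop:projectively-isolated} supplies a globally realizable $(B,F,\mathcal F)$ and a surjection $p$ satisfying \eqref{eq:s1-02}. Applying \eqref{eq:s1-02} to $Z=[F,\mathcal F]_G$, which is legitimate because its $F$-language is $\mathcal F$, gives $p^G([F,\mathcal F]_G)=X$. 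Hence \eqref{eq:s1-04} is exactly \eqref{eq:s1-02}, and \eqref{eq:s1-03} is $X_E=\mathcal E$.

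For (2)$\Rightarrow$(1), fix $A$ with $|A|\geq2$ and a nonempty basic open set $\mathcal N_{X_1}^E$ in $\Sub_G(A)$. We must find a projectively isolated subshift inside it. Since $X_1$ is a nonempty subshift, the triple $(A,E,(X_1)_E)$ is globally realizable, as remarked after Definition~\ref{def:globally-realizable}. Apply (2) with $\mathcal E=(X_1)_E$ to obtain $(B,F,\mathcal F)$ and $p$, and set $X=p^G([F,\mathcal F]_G)$. This set is a nonempty subshift: it is the continuous equivariant image of a nonempty compact invariant set. By \eqref{eq:s1-03}, $X_E=(X_1)_E$, so $X\in\mathcal N_{X_1}^E$. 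Condition \eqref{eq:s1-04} is precisely \eqref{eq:s1-02} for this $X$, so Proposition~\ref{prop:projectively-isolated} (2)$\Rightarrow$(1) shows that $X$ is projectively isolated. Thus projectively isolated subshifts are dense, and Doucha's theorem yields \STRP.

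The only delicate point is bookkeeping in the basic open sets. The sets $\mathcal N_X^E$ form a basis, so every nonempty open set contains one of them, and density need only be checked against these. We also need that the $E$-language of $X_0=[E,\mathcal E]_G$ equals $\mathcal E$, which is exactly global realizability. Beyond these two checks, the argument is a direct translation through the proposition.
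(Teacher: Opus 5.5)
Your proposal is correct and follows essentially the same route as the paper. Both directions go through Doucha's density characterization \cite[Theorem~3.1]{doucha2024} and Proposition~\ref{prop:projectively-isolated}, with condition (2) tested against the basic neighborhoods $\mathcal N_X^E$; your explicit check that $p^G([F,\mathcal F]_G)$ is a nonempty subshift fills in a step the paper leaves implicit.
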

	
	\begin{proof}
		Assume (1), and let $(A,E,\mathcal E)$ be globally realizable with $|A|\geq2$. Write $X_0=[E,\mathcal E]_G$. Then $(X_0)_E=\mathcal E$. By \cite[Theorem~3.1]{doucha2024}, choose a projectively isolated $X\in\mathcal N_{X_0}^E$. Proposition~\ref{prop:projectively-isolated} gives a globally realizable triple $(B,F,\mathcal F)$ and a surjection $p:B\to A$ such that $Z_F=\mathcal F$ implies $p^G(Z)=X$ for every $Z\in\Sub_G(B)$. Since $[F,\mathcal F]_G$ has $F$-language $\mathcal F$, one has $p^G([F,\mathcal F]_G)=X$. Thus $\bigl(p^G([F,\mathcal F]_G)\bigr)_E=X_E=(X_0)_E=\mathcal E$, which is \eqref{eq:s1-03}, while \eqref{eq:s1-04} follows from the same implication.
		
		Assume (2). Let $X\in\Sub_G(A)$ with $|A|\geq2$, and let $E\subseteq G$ be finite. Since $X\subseteq[E,X_E]_G$ and $([E,X_E]_G)_E\subseteq X_E$, the triple $(A,E,X_E)$ is globally realizable. Apply (2), and write $V=[F,\mathcal F]_G$ and $Y=p^G(V)$. Equation~\eqref{eq:s1-04} says that $Z_F=\mathcal F$ implies $p^G(Z)=Y$ for every $Z\in\Sub_G(B)$, so Proposition~\ref{prop:projectively-isolated} gives that $Y$ is projectively isolated. Equation~\eqref{eq:s1-03} gives $Y_E=X_E$, and hence $Y\in\mathcal N_X^E$. Thus every basic neighborhood of every $X\in\Sub_G(A)$ contains a projectively isolated subshift. By \cite[Theorem~3.1]{doucha2024}, $G$ has \STRP.
	\end{proof}

	\section{Finite-index ascent and local virtual freeness}
	
	\subsection{Finite-index ascent}
	
	Let $H\leq G$. The \textbf{index} $[G:H]$ is the cardinality of the left-coset space $G/H$, equivalently of the right-coset space $H\backslash G$. The subgroup has \textbf{finite index} when this number is finite. Two symbolic constructions for finite-index inclusions are used below: higher power, which records all right-coset coordinates at one $H$-coordinate, and free extension, which imposes the same $H$-rules independently on each left coset. These constructions occur in \cite[Section~3]{bitar2024}; the higher-block form also appears in \cite[Section~3.1]{carrollpenland2015}. We record the exact formulas needed here.
	
	\begin{lem}\label{lem:higher-power}
		Let $H\leq G$ have finite index, let $\mathcal R=H\backslash G$, and choose $t_r\in G$ with $Ht_r=r$ for each $r\in\mathcal R$. For a finite alphabet $A$, write $C=A^{\mathcal R}$. The map $\Phi_A:A^G\to C^H$ defined by
		$\Phi_A(x)(h)(r)=x(ht_r)$ is an $H$-equivariant homeomorphism.
	\end{lem}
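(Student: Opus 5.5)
The plan is to check three things directly: that $\Phi_A$ is a bijection, that it is continuous in both directions, and that it intertwines the two shift actions. The structural input is the right-coset decomposition. Since $\mathcal R=H\backslash G$ and $Ht_r=r$, every $g\in G$ can be written uniquely as $g=ht_r$ with $h\in H$ and $r\in\mathcal R$. Existence holds because $g$ lies in some right coset $r=Hg=Ht_r$. Uniqueness holds because $ht_r=h't_{r'}$ forces $Ht_r=Ht_{r'}$, hence $r=r'$, and cancelling $t_r$ then gives $h=h'$. So the map $H\times\mathcal R\to G$, $(h,r)\mapsto ht_r$, is a bijection.

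\textbf{Bijectivity.} Under this identification, $\Phi_A$ is the currying map
\[
A^G\cong A^{H\times\mathcal R}\cong (A^{\mathcal R})^H=C^H .
\]
Concretely, the inverse is $\Psi(c)(ht_r)=c(h)(r)$, which is well defined by uniqueness of the decomposition. One checks immediately that $\Psi\circ\Phi_A$ and $\Phi_A\circ\Psi$ are identities.

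\textbf{Continuity.} Both spaces carry product topologies, so continuity reduces to coordinates. For $\Phi_A$, it suffices that each map $x\mapsto\Phi_A(x)(h)\in C$ is continuous. Since $C=A^{\mathcal R}$ is finite and discrete, this amounts to each map $x\mapsto x(ht_r)$ being continuous, which is a projection. For $\Psi$, the coordinate $c\mapsto\Psi(c)(ht_r)=c(h)(r)$ is a projection followed by evaluation on the finite set $C$. Alternatively, $\Phi_A$ is a continuous bijection from a compact space to a Hausdorff space, so it is automatically a homeomorphism.

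\textbf{Equivariance.} For $k\in H$ one computes
\[
\Phi_A(\sigma_G(k)x)(h)(r)=(\sigma_G(k)x)(ht_r)=x(k^{-1}ht_r)=\Phi_A(x)(k^{-1}h)(r)=(\sigma_H(k)\Phi_A(x))(h)(r),
\]
where $k^{-1}h\in H$ is used in the third equality. The only point needing care is to act on the left, so that the coset representatives $t_r$ are untouched. This is exactly why right cosets $H\backslash G$ are used here. No step is a real obstacle; the uniqueness of the decomposition $g=ht_r$ is the one thing to state carefully.
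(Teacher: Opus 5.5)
Your proof is correct and follows the paper's argument: the unique decomposition $g=ht_r$, the inverse $u\mapsto\bigl(ht_r\mapsto u(h)(r)\bigr)$, coordinatewise continuity in both directions, and the same one-line equivariance computation. Your compact-to-Hausdorff remark is a valid alternative to the paper's explicit check that the inverse is continuous, and your explicit proof of existence and uniqueness of $g=ht_r$ fills in a step the paper only asserts.
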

	
	\begin{proof}
		Every $g\in G$ has a unique expression $g=ht_r$ with $h\in H$ and $r\in\mathcal R$. The inverse map is
		$\Phi_A^{-1}(u)(ht_r)=u(h)(r)$, which is well-defined: we have that
		\[
		\begin{aligned}
			\Phi^{-1}_A(\Phi_A(x))(ht_r) &= \Phi_A(x)(h)(r) = x(ht_r),\\
			 \Phi_A(\Phi^{-1}_A (u))(h)(r) &= \Phi^{-1}_A(u)(ht_r) = u(h)(r).
		\end{aligned}
		\]
		Both maps are coordinate reindexings and are continuous:
		\[
		\begin{aligned}
			\Phi_A(x)(h) = m &\text{ if and only if } \forall r \in \mathcal R  \ [x(ht_r) = m(r)],\\
			\Phi^{-1}_A(u)(ht_r) = a &\text{ if and only if } \exists m\in C \ [m(r) =  a \ \wedge \ u(h) = m].
		\end{aligned}
		\]
		
		For $a,h\in H$ and $r\in\mathcal R$,
		\[
		\Phi_A(\sigma_G(a)x)(h)(r)
		=x(a^{-1}ht_r)
		=(\sigma_H(a)\Phi_A(x))(h)(r).
		\]
		This is $H$-equivariance.
	\end{proof}
	
	\begin{lem}[{\cite[Lemma~3.4]{bitar2024}}]\label{lem:free-extension}
		Let $H\leq G$, let $B$ be a finite alphabet, and let $V=[F,\mathcal F]_H\subseteq B^H$ be nonempty, where $F\subseteq H$ is finite. Its \textbf{free extension} to $G$ is $V^{\uparrow}=[F,\mathcal F]_G$. Fix representatives $L\subseteq G$ for the left cosets $G/H$. For $y\in B^G$ and $\ell\in L$, define $v_\ell\in B^H$ by $v_\ell(h)=y(\ell h)$. Then
		\begin{equation}\label{eq:s2-01}
			y\in V^{\uparrow}\ \longleftrightarrow\ \forall \ell \in L \ [ v_\ell\in V].
		\end{equation}
		In particular, $V^{\uparrow}$ is nonempty, and its restrictions to distinct left cosets may be chosen independently.
	\end{lem}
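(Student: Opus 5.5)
The plan is to prove \eqref{eq:s2-01} by a direct coordinate computation, using the unique decomposition of $G$ into left cosets. Every $g\in G$ can be written uniquely as $g=\ell h$ with $\ell\in L$ and $h\in H$. The key observation is that, because $F\subseteq H$, every translate $\{g f: f\in F\}$ lies in a single left coset $gH=\ell H$. Hence a constraint of $[F,\mathcal F]_G$ centred at $g$ only reads coordinates inside one coset.

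For the equivalence, fix $y\in B^G$ and $g=\ell h$. For $f\in F$, compute
\[
(\sigma_G(g^{-1})y)(f)=y(gf)=y(\ell hf)=v_\ell(hf)=(\sigma_H(h^{-1})v_\ell)(f).
\]
Thus $(\sigma_G(g^{-1})y)\uphar F=(\sigma_H(h^{-1})v_\ell)\uphar F$. Since $g\mapsto g^{-1}$ and $h\mapsto h^{-1}$ are bijections, the condition ``for every $g\in G$, $(\sigma_G(g)y)\uphar F\in\mathcal F$'' is equivalent to ``for every $\ell\in L$ and every $h\in H$, $(\sigma_H(h)v_\ell)\uphar F\in\mathcal F$''. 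That is, $v_\ell\in[F,\mathcal F]_H=V$ for each $\ell\in L$, which gives both directions of \eqref{eq:s2-01}.

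For the final assertions, note that the map $y\mapsto(v_\ell)_{\ell\in L}$ is a bijection $B^G\to(B^H)^L$, again by uniqueness of the decomposition $g=\ell h$. By \eqref{eq:s2-01}, it restricts to a bijection $V^\uparrow\to V^L$. So any family $(v_\ell)_{\ell\in L}$ of points of $V$, chosen independently, assembles to a point $y(\ell h)=v_\ell(h)$ of $V^\uparrow$. Since $V\neq\emptyset$, we can take all $v_\ell$ equal to a fixed $v\in V$, so $V^\uparrow\neq\emptyset$.

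No step is a real obstacle. The only care needed is with inverses in the shift convention $(\sigma(g)x)(k)=x(g^{-1}k)$, and with checking that $F\subseteq H$ is exactly what keeps each translated window inside a single left coset. This is also why left cosets, rather than right cosets, are the ones that appear.
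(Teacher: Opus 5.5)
Your proposal is correct and follows essentially the same route as the paper: the unique left-coset decomposition $g=\ell h$ turns each translated $F$-window constraint over $G$ into a translated window constraint on the single slice $v_\ell$, checked in both directions. Your explicit bijection $V^{\uparrow}\to V^L$ also settles the nonemptiness and independence claims, which the paper's proof leaves implicit.
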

	
	\begin{proof}
		Suppose $y\in V^{\uparrow}$. For $\ell\in L$, we show that $\forall a\in H \ [\sigma_H (a) v_\ell \upharpoonright F \in \mathcal F]$; but it's $(\sigma_G(a\ell^{-1})y)\upharpoonright F$, which is in $ \mathcal F $. So $v_\ell\in V$.
		
		Conversely, if for every $ \ell\in L $ we have $ v_\ell\in V $ i.e. $\forall a\in H \ [\sigma_H (a) v_\ell \upharpoonright F \in \mathcal F]$, then for any $ g\in G $, write $g^{-1}=\ell a^{-1}$ with $\ell\in L$ and $a\in H$. Then we have $ (\sigma_G(g)y) \upharpoonright F = (\sigma_H(a)v_\ell)\upharpoonright F \in \mathcal{F}$. So $ y\in V^{\uparrow} $.
	\end{proof}
	
	\begin{thm}\label{thm:finite-index}
		Let $G$ be a finitely generated countable group and let $H\leq G$ have finite index. If $H$ has \STRP, then $G$ has \STRP.
	\end{thm}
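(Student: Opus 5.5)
We will verify condition (2) of Theorem~\ref{thm:finite-characterization} for $G$ by moving the problem down to $H$ with the higher-power coding of Lemma~\ref{lem:higher-power}, applying condition (2) for $H$ there, and lifting the witness back to $G$ with the free extension of Lemma~\ref{lem:free-extension}. Throughout, fix $\mathcal R=H\backslash G$ and representatives $t_r$ as in Lemma~\ref{lem:higher-power}, with $t_{H}=1_G$. Also fix a finite symmetric generating set $S$ of $G$. For each $s\in S$ and each $r\in\mathcal R$, write $t_rs=h_{r,s}t_{r\cdot s}$ with $h_{r,s}\in H$.

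\textbf{Step 1 (descending).} Let $(A,E,\mathcal E)$ be globally realizable over $G$ and set $X_0=[E,\mathcal E]_G$. Transport $X_0$ by $\Phi_A$. Since $G=\bigcup_r Ht_r$ and every $g\in G$ factors as $g=ht_r$, the condition $(\sigma_G(g)x)\uphar E\in\mathcal E$ becomes a condition on the pattern of $\Phi_A(x)$ on a finite window $E'\subseteq H$ around $h$. Concretely, $E'$ consists of the $H$-parts of the elements $t_r^{-1}$-translated from $E$ and decomposed via $g=ht_r$. Hence $\Phi_A(X_0)=[E',\mathcal E']_H$ is an $H$-SFT over $C=A^{\mathcal R}$. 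After enlarging $E'$ and replacing $\mathcal E'$ by the language, the triple $(C,E',\mathcal E')$ is globally realizable. Moreover, for every $G$-subshift $X$ over $A$, we have $X_E=\mathcal E$ as soon as $\Phi_A(X)_{E'}=\mathcal E'$. Apply condition (2) for $H$, which holds because $H$ has \STRP. This gives a globally realizable triple $(B,F,\mathcal F)$ over $H$ and a surjection $p:B\to C$ satisfying \eqref{eq:s1-03} and \eqref{eq:s1-04} over $H$. Write $V=[F,\mathcal F]_H$ and $Y=p^H(V)$.

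\textbf{Step 2 (ascending).} Here we must produce a $G$-SFT over an alphabet $D$ together with a one-block map $D\to A$. Take $D=B$ and define $q:B\to A$ by $q(b)=p(b)(H)$, the coordinate at the trivial coset. Let
\[
W=\{y\in V^{\uparrow}:\ p(y(g))(r)=q(y(gt_r))\text{ for all }g\in G,\ r\in\mathcal R\}.
\]
The intended meaning is that the symbol at $g$ already predicts, via $p$, the $A$-letters at all points $gt_r$, and these predictions must agree with what is read off at those points. The constraint is local with window $F\cup\{t_r\}$, so $W$ is a $G$-SFT. By construction, $\Phi_A(q^G(y))$ restricted to each left coset agrees with $p^H$ of the corresponding $V$-configuration. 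Finite generation, through the finitely many pairs $(r,s)$, is used to show two things. First, consistency across left cosets can be checked on finitely many generator steps. Second, every $H$-configuration of $V$ extends to an element of $W$, so that $q^G(W)=\Phi_A^{-1}(Y)$ and $W$ has full $F$-language $\mathcal F$ on each coset. This last point may force $V$ to be replaced by a product SFT over $H$ that encodes the neighbouring cosets via the $h_{r,s}$, and I would set up $(B,F,\mathcal F)$ in Step~1 already in that form. Then \eqref{eq:s1-03} for $G$ follows from \eqref{eq:s1-03} for $H$ together with the transfer of languages established in Step~1.

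\textbf{Step 3 (the main obstacle: verifying \eqref{eq:s1-04} over $G$).} Let $F^{\ast}$ be the window defining $W$ and set $\mathcal F^{\ast}=W_{F^{\ast}}$. Take any $G$-subshift $Z\subseteq B^G$ with $Z_{F^{\ast}}=\mathcal F^{\ast}$. Restricting $Z$ to $H$ gives an $H$-subshift $Z|_H=\{h\mapsto z(h):z\in Z\}$ whose $F$-language is exactly $\mathcal F$. By \eqref{eq:s1-04} for $H$, we get $p^H(Z|_H)=Y$. The compatibility constraints built into $F^{\ast}$ then force $\Phi_A(q^G(Z))=p^H(Z|_H)$, and hence $q^G(Z)=\Phi_A^{-1}(Y)=q^G(W)$. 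The delicate point is that $Z$ is only assumed to have the right $F^{\ast}$-language, not to lie in $W$. So the identity $\Phi_A\circ q^G=p^H\circ(\cdot)|_H$ must be derived purely from the allowed local patterns, and, conversely, every $Y$-point must be hit by some configuration in $Z$. I would first try to obtain both directions by choosing $F^{\ast}\supseteq F\cup\{t_r\}\cup S$, so that the decoding identity holds pointwise for every configuration whose $F^{\ast}$-patterns lie in $\mathcal F^{\ast}$.
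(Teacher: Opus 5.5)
\textbf{There is a genuine gap in Step~2, and the subshift $W$ does not work as stated.}

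Your rule $p(y(g))(r)=q(y(gt_r))$ says that $p(y(g))=(x(gt_r))_{r\in\mathcal R}$ at \emph{every} $g\in G$, where $x=q^G(y)$. Since $V^{\uparrow}$ is $G$-invariant, the slice $h\mapsto y(gh)$ on any left coset $gH$ lies in $V$. The rule, applied at the points $gh$, says that $p^H$ of this slice equals $\Phi_A(\sigma_G(g^{-1})x)$. Hence
\[
q^G(W)\subseteq\bigcap_{g\in G}\sigma_G(g)\Phi_A^{-1}(Y),
\]
which is the largest $G$-invariant subset of $\Phi_A^{-1}(Y)$.

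Your identity $q^G(W)=\Phi_A^{-1}(Y)$ equates a $G$-invariant set with one that is only $H$-invariant. It therefore presupposes that $Y$ is the higher power of a $G$-subshift, and condition~(2) for $H$ gives nothing of the kind. Its only link to $X_0$ is the finite condition $Y_{E'}=\Phi_A(X_0)_{E'}$. That condition controls each single translate $\sigma_G(t_r^{-1})\Phi_A^{-1}(Y)$, but not their intersection.

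For instance, take $G=\bbz$, $H=2\bbz$, $A=\{0,1\}$, $X_0=A^{\bbz}$, and $n$ odd and large relative to $E'$. Let $Y_n=\Phi_A(S_n)$, where
\[
S_n=\{x:x(k+n)=x(k)\text{ for even }k,\ x(k+n)\neq x(k)\text{ for odd }k\}.
\]
Then $Y_n$ is an $H$-SFT with the same $E'$-language as $\Phi_A(X_0)$, yet $S_n$ is disjoint from its image under the shift by one. You give no argument that projective isolation rules out such behaviour.

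This has three consequences. First, $W$ may be empty. Second, your claim that every point of $V$ extends into $W$ fails. Third, in Step~3 you only obtain $(Z|_H)_F=W_F$, which may be a proper subset of $\mathcal F$, so \eqref{eq:s1-04} for $H$ cannot be invoked. Re-encoding $V$ as a product SFT over $H$ does not help, because the defect lies in $Y$, which you do not control.

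\textbf{The missing idea.} The image over $G$ must be the $G$-\emph{saturation} $\bigcup_{r\in\mathcal R}\sigma_G(t_r^{-1})\Phi_A^{-1}(Y)$, not the core. So each configuration must decode from a single left coset, not from all of them at once.

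The paper does this with an extra phase layer. This is the $\mathcal R$-valued SFT $\{m_r:r\in\mathcal R\}$ with $m_r(g)=rg$, cut out by the rules $m(gs)=m(g)s$ for $s$ in a finite generating set. This is the only place finite generation is used; in your $W$ it plays no role at all, which is already a warning sign.

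Over the alphabet $\mathcal R\times B\times B$, one takes the SFT of triples $(m,y,d)$ with $y\in V^{\uparrow}$ and a copy layer $d(g)=y(gt_{m(g)}^{-1})$, together with the one-block map $q(r,b,c)=p(c)(r)$. A point $(m_H,y,d)$ is sent to $\Phi_A^{-1}(p^H(y\uphar H))$. Meanwhile $y$ on the other left cosets stays free by Lemma~\ref{lem:free-extension}.

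That freedom is exactly what you lack. It guarantees that, for every $Z$ with the correct window language, the $H$-restrictions of the $y$-layer of the phase-$H$ points of $Z$ have $F$-language $\mathcal F$, so \eqref{eq:s1-04} for $H$ applies to them. Equivariance then gives that $q^G(Z)$ equals the saturation for every such $Z$. Each phase has $E$-language $\mathcal E$ by the window condition, and hence so does the union.
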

	
	\begin{proof}
		We verify condition (2) of Theorem~\ref{thm:finite-characterization}. Fix a globally realizable triple $(A,E,\mathcal E)$ over $G$, and write $X=[E,\mathcal E]_G$. Then $X_E=\mathcal E$.
		
		\smallskip
		\noindent\emph{Step 1: encode $X$ as an $H$-subshift.}
		Let $\mathcal R=H\backslash G$. Choose representatives $t_r$ with $t_H=1_G$, write $C=A^{\mathcal R}$, and let $W=\Phi_A(X)\subseteq C^H$. For $r\in\mathcal R$ and $e\in E$, the unique element $\lambda(r,e)\in H$ satisfying
		\begin{equation}\label{eq:s2-02}
			t_re=\lambda(r,e)t_{re}
		\end{equation}
		is well defined, where $re$ denotes the right action of $G$ on $H\backslash G$. Let
		$K=\{1_H\}\cup\{\lambda(r,e):r\in\mathcal R,\ e\in E\}$. The triple $(C,K,W_K)$ is globally realizable because $\emptyset \neq W\subseteq [K, W_K]_H$ by $ H $-equivariance, and $ ([K,W_K]_{H})_K \subseteq W_K $ by shift-invariance for $ 1_H $. Also $|C|\geq2$.
		
		\smallskip
		\noindent\emph{Step 2: use \STRP for $H$.}
		Theorem~\ref{thm:finite-characterization}, applied to $(C,K,W_K)$, gives a globally realizable triple $(B,F,\mathcal F)$ over $H$ and a surjection $p:B\to C$. Write $ V=[F,\mathcal F]_H,\ P=p^H(V) $. The conclusion of that theorem is $V_F=\mathcal F,\ P_K=W_K $, and for every $H$-subshift $U\subseteq B^H$,
		\begin{equation}\label{eq:s2-05}
			U_F=\mathcal F\ \longrightarrow\ p^H(U)=P.
		\end{equation}
		
		\smallskip
		\noindent\emph{Step 3: encode the right-coset phase by finite rules.}
		Let $V^{\uparrow}=[F,\mathcal F]_G$. Lemma~\ref{lem:free-extension} gives nonemptiness. Choose a finite symmetric generating set $S$ of $G$. For $r\in\mathcal R$, define $m_r\in\mathcal R^G$ by $m_r(g)=rg$. Let $Q_M=\{1_G\}\cup S$ and
		\begin{equation}\label{eq:s2-06}
			\mathcal M=\{a\in\mathcal R^{Q_M}:  \forall s\in S \ [a(s)=a(1_G)s]\}.
		\end{equation}
		Then $[Q_M,\mathcal M]_G=\{m_r:r\in\mathcal R\}$. Indeed, the translated local equations give $m(gs)=m(g)s$ for every $g\in G$ and $s\in S$. Induction along a word over $S$ gives $m(g)=m(1_G)g$. This is the only point at which finite generation of $G$ is used.
		
		\smallskip
		\noindent\emph{Step 4: build the lifting SFT.}
		Let $D=\mathcal R\times B\times B$ and
		$Q_0=Q_M\cup F\cup\{t_r^{-1}:r\in\mathcal R\}$. For $a\in D^{Q_0}$, write $a=(a_1,a_2,a_3)$ coordinatewise. Define
		\[
		\mathcal Q_0 = \{a\in D^{Q_0}: \forall s\in S\ [a_1(s)=a_1(1_G)s] \ \wedge \  a_2\uphar F \in\mathcal F \ \wedge \ a_3(1_G)=a_2 (t_{a_1(1_G)}^{-1})\}.
		\]
		Write $\widehat V=[Q_0,\mathcal Q_0]_G$. We have that
		\[
		\widehat V = \{(m,y,d)\in D^G: m\in\{m_r:r\in\mathcal R\}\ \wedge 
		y\in V^{\uparrow} \ \wedge \ \forall g\in G \ [d(g)=y(gt_{m(g)}^{-1})]\}.
		\]
		The SFT $\widehat V$ is nonempty. Let $Q=Q_0$ and $\mathcal Q=(\widehat V)_Q$. Fact~\ref{fact:normalization} gives that $\widehat V=[Q,\mathcal Q]_G$ and $(D,Q,\mathcal Q)$ is globally realizable.
		
		Define $q:D\to A$ by $ q(r,b,c)=p(c)(r) $. This map is onto. Given $a\in A$, choose $r\in\mathcal R$ and $u\in C$ with $u(r)=a$, choose $c\in B$ with $p(c)=u$, and choose any $b\in B$.
		
		\smallskip
		\noindent\emph{Step 5: analyze every subshift with $Q$-language $\mathcal Q$.}
		Let $Z\subseteq D^G$ be a subshift with $Z_Q=\mathcal Q$. Since $\widehat V=[Q,\mathcal Q]_G$, one has $Z\subseteq\widehat V$. For $r\in\mathcal R$, let
		\begin{equation}\label{eq:s2-10}
			Z_r=\{z\in Z: z_1 = m_r\},
			\
			U_Z=\{z_2\uphar H:z \in Z_H\}.
		\end{equation}

		For $r=H$, the set $Z_H$ is compact and invariant under $H$. Restriction of the second coordinate from $Z_H$ to $H$ is continuous and $H$-equivariant. Its image $U_Z$ is a nonempty $H$-subshift. For any $ y\upharpoonright H \in U_Z $, we have that $ y\in V^{\uparrow} $. By applying Lemma \ref{lem:free-extension} with the representative for $ H $ being $ 1_G $, we have that $ y\upharpoonright H \in V $, so $ U_Z\subseteq V $.
		
		We claim that $(U_Z)_F=\mathcal F$. The inclusion $(U_Z)_F\subseteq\mathcal F$ follows from $U_Z\subseteq V$. For the reverse inclusion, fix $a\in\mathcal F$. Since $V_F=\mathcal F$, choose $v\in V$ with $v\uphar F=a$. By Lemma~\ref{lem:free-extension}, we can freely define a $y_0\in V^{\uparrow}$ with $y_0\uphar H=v$. Define $\bar y_0$ by $ \forall g\in G $ letting  $ \bar{y_0}(g) = y_0(gt^{-1}_{m_H (g)}) $, and let $z_0=(m_H,y_0,\bar y_0)$, which is in $\widehat V$. The equality $Z_Q=(\widehat V)_Q$ gives $z\in Z$ with $z\uphar Q=z_0\uphar Q$. As $ 1_G\in Q $, $ z_1(1_G) = m_H(1_G) = H $; and as $ Z\subseteq \widehat V $, $ z_1 = m_H $, which implies $ z\in Z_H $. Also, $ z_2 \upharpoonright F = y_0 \upharpoonright F = a $. This proves that $(U_Z)_F=\mathcal F$. Then Equation \eqref{eq:s2-05} gives
		\begin{equation}\label{eq:s2-11}
			p^H(U_Z)=P.
		\end{equation}
		
		\smallskip
		\noindent\emph{Step 6: compute the image and its $E$-language.}
		Take $z=(m_H,y,d)\in Z_H$ and let $v=y\uphar H$, so $ v\in U_Z $. If $g=ht_r$, then $m_H(g)=r$ and $d(g)=y(ht_r t^{-1}_{m_H(g)})=v(h)$. The one-block image is
		\begin{equation}\label{eq:s2-12}
			q^G(z)(ht_r)=q(r, y(g), v(h))=p(v(h))(r)=\Phi_A^{-1}(p^H(v))(ht_r).
		\end{equation}
		Equation \eqref{eq:s2-11} yields
		$q^G(Z_H)\subseteq \Phi_A^{-1}(P)$. Also, for any $ u\in P $, it's from some $ z\in Z_H $ with $ z_2\upharpoonright H \in U_Z $ such that $ u = p^H(z_2\upharpoonright H) $, and \eqref{eq:s2-12} applies, so $q^G(Z_H) =  \Phi_A^{-1}(P)$. 
		
		For every $ r\in \mathcal R $, define $ \widehat{V}_r = \{(m,y,d)\in \widehat{V}: m=m_r\} $. The same calculation, using $p^H(V)=P$ and the independent free extension of every $v\in V$, gives
		$q^G(\widehat V_H)=\Phi_A^{-1}(P)$.
		
		Now we show that for any $ r\in \mathcal{R} $, $ q^G(Z_r) = \sigma_G(t_r^{-1})\Phi_A^{-1}(P) = q^G(\widehat{V}_r) $. This is because $ Z_r = \sigma_G(t^{-1}_r)Z_H $, $ \widehat{V}_r = \sigma_G(t_r^{-1})\widehat{V}_H $ and $ q^G $ is equivariant. So
		\begin{equation}\label{eq:s2-13}
			q^G(Z)=\bigcup_{r\in\mathcal R}\sigma_G(t_r^{-1})\Phi_A^{-1}(P)
			=q^G(\widehat V).
		\end{equation}
		
		For $r\in\mathcal R$, define $\Theta_r:C^K\to A^E$ by
		$\Theta_r(\eta)(e)=\eta(\lambda(r,e))(re)$. Equation \eqref{eq:s2-02} gives, for $u\in P$ and $e\in E$,
		\begin{equation}\label{eq:s2-14}
			\bigl(\sigma_G(t_r^{-1})\Phi_A^{-1}(u)\bigr)(e)=\Phi^{-1}_A(u)(t_r e) = \Phi^{-1}_A(u)(\lambda(r,e) t_{re})
			=u(\lambda(r,e))(re).
		\end{equation}
	So $ \sigma_G(t_r^{-1})\Phi_A^{-1}(u) \uphar E= \Theta_r(u\uphar K) $; and by $P_K=W_K$, $W=\Phi_A(X)$,
		\begin{equation}\label{eq:s2-15}
			\begin{aligned}
				\bigl(\sigma_G(t_r^{-1})\Phi_A^{-1}(P)\bigr)_E
				&=\Theta_r(P_K)=\Theta_r(W_K)\\ &=\bigl(\sigma_G(t_r^{-1})\Phi_A^{-1}(W)\bigr)_E \\
				&=\bigl(\sigma_G(t_r^{-1})X\bigr)_E=X_E=\mathcal E.
			\end{aligned}
		\end{equation}
		Each phase in \eqref{eq:s2-13} has $E$-language $\mathcal E$. The union in \eqref{eq:s2-13} satisfies
		$(q^G(\widehat V))_E=\mathcal E$, and \eqref{eq:s2-13} is the universal image property. The triple $(D,Q,\mathcal Q)$ and the surjection $q$ satisfy Theorem~\ref{thm:finite-characterization}. The group $G$ has \STRP.
	\end{proof}
	
	\begin{rem}
		Theorem~\ref{thm:finite-index} was also obtained independently by Xu; see \cite[Theorem~7.3]{xu2026}.
	\end{rem}
	
	A group is \textbf{virtually free} if it contains a free subgroup of finite index. We include finite groups by allowing the free subgroup to have rank zero.
	
	\begin{cor}\label{cor:virtually-free}
		Every finitely generated virtually free group has \STRP.
	\end{cor}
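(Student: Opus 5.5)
The plan is to reduce Corollary~\ref{cor:virtually-free} to Theorem~\ref{thm:finite-index} by producing a finite-index subgroup that is already known to have \STRP. Let $G$ be finitely generated and virtually free, and fix a free subgroup $H\leq G$ of finite index. A finite-index subgroup of a finitely generated group is finitely generated; this follows from the Schreier lemma, with generators $t_r s t_{rs}^{-1}$. Hence $H\cong F_n$ for some finite $n\geq 0$, and it suffices to know that $F_n$ has \STRP for every finite $n$. Theorem~\ref{thm:finite-index} then transfers \STRP from $H$ to $G$, since $G$ is finitely generated by hypothesis.

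The cases must be split according to $n$.
\begin{itemize}
\item If $n=0$, then $G$ is finite and $H$ is trivial. The trivial group has \STRP: this is the classical fact that $\Homeo(\mathfrak C)$ has a comeager conjugacy class of nothing, namely the single point $\Act_{\{1\}}(\mathfrak C)$. Applying Theorem~\ref{thm:finite-index} with $H=\{1\}$ gives \STRP for every finite group.
\item If $n\geq1$, I would invoke the known result that finitely generated free groups have \STRP. This is due to Kechris--Rosendal for $\bbz$ and for $F_n$ in general, and it is recovered symbolically in \cite{doucha2024}: free groups have dense projectively isolated subshifts, and more generally finitely generated free groups satisfy the density criterion of \cite[Theorem~3.1]{doucha2024}.
\end{itemize}

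If one instead wants a self-contained argument for $F_n$, I would verify condition (2) of Theorem~\ref{thm:finite-characterization} directly. Given a globally realizable $(A,E,\mathcal E)$ over $F_n$, I would recode $[E,\mathcal E]_{F_n}$ as a nearest-neighbour tree SFT via a higher-block presentation on the Cayley tree. The point is that on a tree, for a nearest-neighbour SFT whose local language is globally realizable, every subshift $Z$ with the same $F$-language must contain all configurations. This holds because tree SFTs with extendable edge rules are determined by their edge languages, and any locally admissible ball pattern extends along branches independently.

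The main obstacle is precisely that last step. A subshift $Z$ with $Z_F=\mathcal F$ need not equal the whole SFT, so one only gets $p^G(Z)=p^G([F,\mathcal F]_G)$ after choosing the lift cleverly. I would first try to cite the existing theorem for free groups rather than reprove it, and keep the corollary's proof to the reduction via Theorem~\ref{thm:finite-index}.
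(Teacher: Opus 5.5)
Your reduction for infinite $G$ is the paper's. You take a finite-index free subgroup $F_n$ with $n\geq1$, cite \STRP for $F_n$, and apply Theorem~\ref{thm:finite-index}. Your Schreier remark supplies the finiteness of the rank, which the paper takes for granted.

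Two corrections are needed in that case. First, the attribution is wrong. Kechris--Rosendal treat the single-homeomorphism case $\bbz$ and pose ample generics as a question. The rank-$n$ statement is Kwiatkowska's theorem \cite[Theorem~1.1]{kwiatkowska2012}: the diagonal conjugation action on $\Homeo(\mathfrak C)^n$ has a comeager orbit. It becomes \STRP for $F_n$ through the equivariant homeomorphism $\Act_{F_n}(\mathfrak C)\cong\Homeo(\mathfrak C)^n$ given by evaluation on a free basis. The corollary's proof is essentially this citation plus the ascent theorem, so you should state the correct source and this translation step explicitly.

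Second, drop the ``self-contained'' tree-SFT fallback. Its key claim is false: a subshift with the same $F$-language as a nearest-neighbour tree SFT need not be the whole SFT, as you note yourself. Repairing it by ``choosing the lift cleverly'' is the whole content of the free-group theorem, not a step you can fill in.

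For finite $G$ your route genuinely differs from the paper's. The paper argues directly: $\Sub_G(A)$ is finite and discrete, so every subshift is isolated, hence projectively isolated by \cite[Lemma~2.23]{doucha2024}, and \cite[Theorem~3.1]{doucha2024} gives \STRP. You instead note that $\Act_{\{1\}}(\mathfrak C)$ is a single point, so the trivial group has \STRP straight from the definition. You then apply Theorem~\ref{thm:finite-index} with $H=\{1\}$. This is legitimate: a finite $G$ is finitely generated, $\{1\}$ has finite index, and the ascent proof runs unchanged when everything is finite.

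Your version makes the corollary uniform, with every case passing through one theorem. The cost is that it runs the full ascent machinery, including Theorem~\ref{thm:finite-characterization} for the trivial group, where the paper needs two lines. Both routes ultimately rely on Doucha's characterization holding for finite groups.
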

	
	\begin{proof}
		Let $G$ be infinite and finitely generated virtually free. It contains a finite-index free subgroup $F_m$ of finite positive rank. Kwiatkowska proved that the diagonal conjugation action of $\Homeo(\mathfrak C)$ on $\Homeo(\mathfrak C)^m$ has a comeager orbit \cite[Theorem~1.1]{kwiatkowska2012}. Evaluation on a free basis is an equivariant homeomorphism $\Act_{F_m}(\mathfrak C)\cong\Homeo(\mathfrak C)^m$. The group $F_m$ has \STRP. Theorem~\ref{thm:finite-index} applies.
		
		If $G$ is finite, then $A^G$ is finite for every finite alphabet $A$. The space $\Sub_G(A)$ is finite and discrete. Every subshift is isolated, and every isolated subshift is projectively isolated by \cite[Lemma~2.23]{doucha2024}. Doucha's characterization \cite[Theorem~3.1]{doucha2024} gives \STRP.
	\end{proof}
	
	\begin{cor}\label{cor:sl2z}
		The group $\mathrm{SL}_2(\bbz)$ has \STRP.
	\end{cor}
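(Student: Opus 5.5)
The plan is to reduce the statement to Corollary~\ref{cor:virtually-free}. It suffices to check two facts: $\mathrm{SL}_2(\bbz)$ is finitely generated, and it contains a free subgroup of finite index.

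For finite generation, I will use the standard fact that $\mathrm{SL}_2(\bbz)$ is generated by
\[
S=\begin{pmatrix}0&-1\\1&0\end{pmatrix},\qquad T=\begin{pmatrix}1&1\\0&1\end{pmatrix}.
\]
This follows from the Euclidean algorithm: multiplying a matrix on the left by powers of $T$ and by $S$ performs row operations that reduce the lower-left entry to $0$. The result is $\pm T^k$.

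For virtual freeness there are two routes, and I would cite whichever is cleanest.

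The first route uses the amalgam decomposition $\mathrm{SL}_2(\bbz)\cong \bbz/4\ast_{\bbz/2}\bbz/6$. The kernel of any homomorphism to a finite group that is injective on the two finite vertex groups acts freely on the Bass--Serre tree, so it is free and of finite index.

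The second, more concrete route uses the principal congruence subgroup $\Gamma(2)=\ker(\mathrm{SL}_2(\bbz)\to\mathrm{SL}_2(\bbz/2))$. It has index $6$ and equals $\{\pm1\}\times\langle A,B\rangle$, where
\[
A=\begin{pmatrix}1&2\\0&1\end{pmatrix},\qquad B=\begin{pmatrix}1&0\\2&1\end{pmatrix}.
\]
Sanov's theorem shows, via ping-pong, that $A$ and $B$ generate a free group of rank $2$. That free subgroup has index $12$ in $\mathrm{SL}_2(\bbz)$. The ping-pong uses the sets $\{|x|>|y|\}$ and $\{|x|<|y|\}$ in $\bbz^2$.

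With both facts in hand, Corollary~\ref{cor:virtually-free} applies directly. Equivalently, Theorem~\ref{thm:finite-index} applies with $H=F_2$, using Kwiatkowska's theorem for $F_2$.

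The only nontrivial input is the existence of the finite-index free subgroup, and this is classical. The realistic main obstacle is deciding how much of it to cite rather than reprove. I would cite the congruence-subgroup/Sanov fact, or the Serre amalgam decomposition, and verify only the index count $[\mathrm{SL}_2(\bbz):\Gamma(2)]=|\mathrm{SL}_2(\bbz/2)|=6$. That count uses surjectivity of reduction mod $2$, which holds because $T$ and $S$ map onto generators of $\mathrm{SL}_2(\bbz/2)\cong S_3$.
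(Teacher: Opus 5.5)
Your proposal is correct and follows the paper's route. The paper likewise deduces the result from Corollary~\ref{cor:virtually-free} via the splitting $\mathrm{SL}_2(\bbz)\cong C_4*_{C_2}C_6$, using that the derived subgroup is free of rank two and of index $12$; this subgroup is the kernel of the abelianization $\mathrm{SL}_2(\bbz)\to\bbz/12$, which is injective on both vertex groups, so it is exactly your first route. Your $\Gamma(2)$/Sanov alternative produces a different index-$12$ free subgroup, is equally valid, and is essentially the argument in the appendix transcript.
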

	
	\begin{proof}
		The classical splitting
		$\mathrm{SL}_2(\bbz)\cong C_4*_{C_2}C_6$ makes $\mathrm{SL}_2(\bbz)$ virtually free. More precisely, its derived subgroup is free of rank two and has index $12$; see \cite[p.~1525]{andre2021}. Corollary~\ref{cor:virtually-free} applies.
	\end{proof}
	
	\begin{rem}\label{rem:sl34z}
		The corresponding conclusion fails for $\mathrm{SL}_n(\bbz)$ whenever
		$n\geq3$. For $n=3$, it's well known that $ \bbz^2\leq \mathrm{SL}_3(\bbz) $: let
		$u=I_3+E_{12}$ and $v=I_3+E_{13}$. Since
		$E_{12}E_{13}=E_{13}E_{12}=0$, one has
		$u^av^b=I_3+aE_{12}+bE_{13}$ for all $a,b\in\bbz$. Hence
		$(a,b)\mapsto u^av^b$ is injective and
		$\langle u,v\rangle\cong\bbz^2$. The standard block-diagonal embedding
		\[
		\mathrm{SL}_n(\bbz)\longrightarrow\mathrm{SL}_{n+1}(\bbz),
		\
		A\longmapsto
		\begin{pmatrix}
			A&0\\
			0&1
		\end{pmatrix},
		\]
		shows in particular that
		$\bbz^2\leq\mathrm{SL}_3(\bbz)\leq\mathrm{SL}_4(\bbz)$.
		
		Now \cite[Theorem~5.8]{barbiericarrasco2024} gives a nonempty
		$\bbz^2$-SFT of nonzero Medvedev degree, and
		\cite[Corollary~4.4]{barbiericarrasco2024} transfers the existence of
		such an SFT to finitely generated overgroups. Since
		$\mathrm{SL}_3(\bbz)$ and $\mathrm{SL}_4(\bbz)$ are finitely generated
		and recursively presented, \cite[Theorem~1.1]{carrasco2026} implies
		that neither group has \STRP.
		
		For $n\geq5$, this conclusion also follows from the simulation
		results as is shown in the literature \cite{carrasco2026}. Barbieri and Carrasco-Vargas record
		that their simulation theorem applies to $\mathrm{SL}_n(\bbz)$ in these
		dimensions \cite[Introduction]{barbiericarrasco2024}. Their
		\cite[Proposition~5.14(2)]{barbiericarrasco2024} gives a nonempty
		$\mathrm{SL}_n(\bbz)$-SFT of nonzero Medvedev degree. The decidable word
		problem makes $\mathrm{SL}_n(\bbz)$ recursively presented, and
		\cite[Theorem~1.1]{carrasco2026} shows that it does not have \STRP.
	\end{rem}

	\subsection{Locally virtually free groups}
	
	A group is \textbf{locally virtually free} if each of its finitely generated
	subgroups is virtually free. Doucha gave broad obstructions to \STRP for
	non-finitely-generated groups \cite[Theorem~5.5]{doucha2024}. More recently,
	Doucha, Melleray, and Tsankov proved that, for a non-finitely-generated group,
	the projectively isolated subshifts are exactly the minimal sofic subshifts
	\cite[Theorem~7.14]{douchamelleraytsankov2026}. Our argument uses two further
	ideas from their work: the coinduction and minimality arguments of
	\cite[Section~7]{douchamelleraytsankov2026}, and the special-symbol construction
	of \cite[Section~8.2]{douchamelleraytsankov2026}. We combine these with
	Minasyan's theorem that every virtually free group has property (LR), meaning
	that every finitely generated subgroup is a retract of a finite-index subgroup
	\cite[Theorem~1.1]{minasyan2026}, and with the special-symbol construction for
	hyperbolic groups due to Dahmani and Yaman
	\cite[Proposition~4.1]{dahmaniyaman2008}.
	
	For $H\leq K$ and a subshift $X\subseteq A^K$, write $X\uphar H=\{x\uphar H:x\in X\}$. If $T\subseteq K$ is a transversal for the left cosets $K/H$, define $x_t\in A^H$ by $x_t(h)=x(th)$ for $t\in T$ and $h\in H$, as usual. We say that $X$ is \emph{coinduced from $X\uphar H$} if
	\[
	X=\{x\in A^K:\forall t\in T\ [x_t\in X\uphar H]\}.
	\]
	This condition is independent of the choice of $T$.
	
	The following lemma makes explicit the coinduction mechanism used in the proof
	of \cite[Theorem~7.14]{douchamelleraytsankov2026}; compare also
	\cite[Proposition~7.16]{douchamelleraytsankov2026}.
	
	\begin{lem}\label{lem:sofic-coinduction}
		Let $X\subseteq A^G$ be sofic. There is a finitely generated subgroup $H\leq G$ such that, for every $H\leq K\leq G$, the subshift $X\uphar K$ is coinduced from $X\uphar H$, and $X$ is coinduced from $X\uphar K$.
	\end{lem}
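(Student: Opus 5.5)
Since $X$ is sofic, we will write $X=\pi^G(Y)$ for an SFT $Y=[F,\mathcal F]_G\subseteq B^G$ and a one-block map $\pi:B\to A$ (recoding the factor map to a one-block map over a higher-block SFT, as in the Curtis--Hedlund--Lyndon setup). Take $H=\langle F\rangle$, the subgroup generated by the finite window $F$; it is finitely generated. The key observation is that, because all constraints of $Y$ only involve coordinates within a single left coset of $H$ (indeed within $gF\subseteq gH$), $Y$ is the free extension of the $H$-SFT $Y_H:=[F,\mathcal F]_H$. By Lemma~\ref{lem:free-extension}, for any transversal $T$ of $G/H$, $y\in Y$ iff $y_t\in Y_H$ for all $t\in T$. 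The same applies with $K$ in place of $G$: for $H\le K\le G$ and $T_K$ a transversal of $K/H$, $[F,\mathcal F]_K$ is coinduced from $[F,\mathcal F]_H$.

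Next we identify restrictions. First show $Y\uphar H=Y_H$: the inclusion $\subseteq$ is the forward direction of \eqref{eq:s2-01} with representative $1_G$; for $\supseteq$, extend $v\in Y_H$ to $G$ by choosing $v$ on the coset $H$ and arbitrary elements of $Y_H$ (nonempty) on the other cosets, which lies in $Y$ by \eqref{eq:s2-01}. The same argument gives $Y\uphar K=[F,\mathcal F]_K$ (choose a transversal of $G/K$ and a transversal of $K/H$; their products form a transversal of $G/H$). Since $\pi$ is one-block, $\pi$ commutes with restriction to subgroups and with passing to the coset pieces $x_t$: $(\pi^G y)\uphar K=\pi^K(y\uphar K)$ and $(\pi^G y)_t=\pi^H(y_t)$. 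Hence $X\uphar H=\pi^H(Y_H)$ and $X\uphar K=\pi^K([F,\mathcal F]_K)$.

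Now verify coinduction of $X\uphar K$ from $X\uphar H$. If $x\in X\uphar K$, say $x=\pi^K(y)$ with $y\in[F,\mathcal F]_K$, then $x_t=\pi^H(y_t)\in\pi^H(Y_H)=X\uphar H$. Conversely, if $x\in A^K$ has $x_t\in X\uphar H$ for every $t\in T_K$, choose $y^{(t)}\in Y_H$ with $\pi^H(y^{(t)})=x_t$ (a choice for each coset; no compactness is needed since it is a pure coordinatewise assembly), and define $y(th)=y^{(t)}(h)$. Then $y\in[F,\mathcal F]_K$ by the coinduction of the SFT, and $\pi^K(y)=x$. The case $K=G$ gives $X$ coinduced from $X\uphar H$. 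Finally, "$X$ coinduced from $X\uphar K$" follows by the same assembly argument with $H$ replaced by $K$: $Y$ is coinduced from $Y\uphar K=[F,\mathcal F]_K$ because the constraints live in cosets of $H\le K$, hence in cosets of $K$, and then we push forward through $\pi$ exactly as above.

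The main point requiring care is the initial reduction: making sure the sofic presentation can be taken with a one-block factor map from an SFT whose defining window generates a finitely generated subgroup, and that the SFT constraints $(\sigma_G(g)y)\uphar F$ only read coordinates in $g^{-1}$-translates of $F$, which lie in a single left coset of $H$. Once this is fixed, everything else is the coordinatewise independence of Lemma~\ref{lem:free-extension} together with the commutation of one-block maps with coset restriction; the independence of the choice of transversal is already noted in the definition.
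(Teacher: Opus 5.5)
Your proposal is correct and follows essentially the same route as the paper: you take a one-block cover $X=\pi^G([F,\mathcal F]_G)$, choose $H=\langle F\rangle$, apply Lemma~\ref{lem:free-extension}, and use that one-block maps commute with coset slicing. The only difference is cosmetic. You apply the free-extension lemma directly with $K$ as the ambient (or intermediate) group at the SFT level and then push forward. The paper instead first proves that $X$ is coinduced from $X\uphar H$, and then obtains both $K$-statements combinatorially from the product transversal $\{sr\}$ of $G/H$ and projection to the $s=1_G$ slice.
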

	
	\begin{proof}
		After enlarging the alphabet if necessary, write $X=p^G(Y)$, where $Y=[F,\mathcal F]_G\subseteq B^G$ is an SFT and $p:B\to A$ is a surjection. Let $H=\langle F\rangle$ and $V=[F,\mathcal F]_H$. By Lemma~\ref{lem:free-extension}, $Y$ is the free extension of $V$, so $Y\uphar H=V$.
		
		Also, fix a transversal $T\subseteq G$ for the left cosets $G/H$. For $y\in Y$ and $t\in T$, let $y_t(h)=y(th)$ for $h\in H$. Lemma~\ref{lem:free-extension} gives
		\[
		y\in Y\quad\longleftrightarrow\quad
		\forall t\in T\ [y_t\in V],
		\]
		and the family $(y_t)_{t\in T}$ may be chosen independently. Since $p^G$ is one-block, $(p^G(y))_t=p^H(y_t)$ for every $t\in T$. Thus
		\[
		x\in X\quad\longleftrightarrow\quad
		\forall t\in T\ [x_t\in p^H(V)],
		\]
		and $X\uphar H=p^H(V)$. Hence $X$ is coinduced from $X\uphar H$.
		
		Let $H\leq K\leq G$. Choose transversals $R\subseteq K$ for $K/H$ and $S\subseteq G$ for $G/K$, with $1_G\in S$. Then $\{sr:s\in S,\ r\in R\}$ is a transversal for $G/H$. For $x\in A^G$, $s\in S$, and $r\in R$, write $x_s\in A^K$ and $x_{sr}\in A^H$ for the corresponding slices. The preceding characterization gives
		\[
		x\in X
		\ \longleftrightarrow\ 
		\forall s\in S\ \forall r\in R\ [x_{sr}\in X\uphar H].
		\]
		Since $X\uphar H\neq\emptyset$, projecting to the $s=1_G$ slice gives
		\[
		X\uphar K
		=
		\{u\in A^K:\forall r\in R\ [u_r\in X\uphar H]\}.
		\]
		Indeed, every $x\in X$ satisfies the displayed condition on $x_{1_G}$, while any such $u$ extends to a point of $X$ by choosing arbitrary members of $X\uphar H$ on the remaining $srH$-cosets. Since $(x_s)_r=x_{sr}$, we obtain
		\[
		x\in X
		\ \longleftrightarrow\ 
		\forall s\in S\ [x_s\in X\uphar K].
		\]
		Thus $X\uphar K$ is coinduced from $X\uphar H$, and $X$ is coinduced from $X\uphar K$.
	\end{proof}
	
	\begin{thm}\label{thm:locally-virtually-free}
		Let $G$ be a countable locally virtually free group. Then $G$ has \STRP if and only if $G$ is finitely generated.
	\end{thm}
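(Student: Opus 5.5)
The forward direction is immediate. If $G$ is finitely generated and locally virtually free, then $G$ is itself virtually free, so Corollary~\ref{cor:virtually-free} gives \STRP. The work is in the converse. Assume $G$ is countable, locally virtually free and not finitely generated; we must show $G$ fails \STRP. Since $G$ is not finitely generated, \cite[Theorem~7.14]{douchamelleraytsankov2026} identifies the projectively isolated subshifts with the minimal sofic subshifts. By \cite[Theorem~3.1]{doucha2024} it therefore suffices to find an alphabet $A$, a subshift $X_0\subseteq A^G$ and a finite $E\subseteq G$ such that $\mathcal N_{X_0}^E$ contains no minimal sofic subshift.

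\emph{Choice of neighborhood.} I would take $A=\{0,1\}$ and a finite $E\ni 1_G$ with at least two elements. Let $X_0$ be the subshift of configurations in which the symbol $1$ occurs at most once; it is nonempty and $G$-invariant. Its $E$-language is exactly the set of $E$-patterns with at most one $1$. So any $Y\in\mathcal N_{X_0}^E$ contains a point with a $1$, forbids two $1$'s in any translate of $E$, and contains the all-zero $E$-pattern.

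\emph{Using minimality and coinduction.} Suppose $Y\in\mathcal N_{X_0}^E$ is minimal and sofic. Because some point of $Y$ carries a $1$, minimality forces the $1$'s to be syndetic in every point of $Y$: there is a finite $J\subseteq G$ with $gJ$ meeting the support of $1$'s for every $g$ and every point. Lemma~\ref{lem:sofic-coinduction} gives a finitely generated $H\leq G$ such that $Y$ is coinduced from $Y\uphar K$ for every finitely generated $K\geq H$. Enlarge $H$ to $K=\langle H,J,E\rangle$, which is still finitely generated and hence virtually free. Since $G$ is not finitely generated, $[G:K]=\infty$. Syndeticity is then a constraint on each $K$-slice separately: every point of $Y\uphar K$ has syndetic $1$'s with respect to $J$. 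The obstruction has to come from the interaction across infinitely many cosets.

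\emph{Main obstacle.} The hard step is turning coinduction over an infinite-index subgroup into a contradiction with minimality in the presence of the special symbol. Here is what I would try first. Choose $g\in G\setminus K$, and let $K'=\langle K,g\rangle$, again virtually free. By Minasyan's property (LR) \cite[Theorem~1.1]{minasyan2026}, $K$ is a retract of a finite-index subgroup $L\leq K'$. The retraction $L\to K$ lets us pull $Y\uphar K$ back to an $L$-subshift that is constant along fibres. Compare this with the coinduced description of $Y\uphar L$ from $Y\uphar K$, in which distinct $K$-cosets inside $L$ are independent. Independence across cosets allows a point in which $1$'s placed in neighbouring cosets land in a common translate of $E$; this should contradict the language restriction inherited from $X_0$.

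If the retraction argument does not give the contradiction directly, I would fall back on the special-symbol construction of \cite[Section~8.2]{douchamelleraytsankov2026}, combined with Dahmani--Yaman \cite[Proposition~4.1]{dahmaniyaman2008}. The idea is to choose $X_0$ and $E$ so that the forced special symbol pins down a coset structure incompatible with independent coinduced slices. I expect this step, showing that no minimal coinduced configuration can respect the finite $E$-language, to carry all the difficulty. Everything else is bookkeeping with Lemma~\ref{lem:sofic-coinduction} and Theorem~\ref{thm:finite-characterization}.
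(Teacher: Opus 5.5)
Your forward direction is fine, but the converse has a genuine gap in two places.

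First, the contradiction you aim for cannot occur. Since $E\subseteq K$, each translate $g^{-1}E$ on which the $E$-language is tested lies inside the single left coset $g^{-1}K$. Coinduction is independence across exactly these cosets, so choosing slices independently never puts two $1$'s in one translate of $E$, and the $E$-language of $X_0$ gives no obstruction. The retraction does give a contradiction, but with minimality rather than with the language. For a retraction $\rho\colon L\to K$, set $\Delta(x)(\ell)=x(\rho(\ell))$. By coinduction, $\Delta(Y\uphar K)$ is a nonempty closed $L$-invariant subset of $Y\uphar L$. It is proper unless $Y$ is a singleton: take $1\neq k\in\ker\rho$, so that $k\notin K$, and a point of $Y\uphar L$ with different symbols at $1$ and $k$. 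Since $Y\uphar L$ is minimal by \cite[Lemma~7.1]{douchamelleraytsankov2026}, every projectively isolated subshift is a singleton. Then any neighbourhood excluding the two constant configurations works, yours included. This requires $\ker\rho\neq1$, which is guaranteed only when $[K':K]=\infty$. If $[K':K]<\infty$, (LR) is already witnessed by $L=K$, and nothing better need exist: in $\bbz[1/2]$ with $K=\bbz$, no $L\supsetneq K$ retracts onto $K$.

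Second, this situation cannot be avoided for some groups. In $\bbz[1/2]$ or $\bbq$, every finitely generated overgroup of a finitely generated $H$ has finite index over $H$, so no retraction argument applies. The paper splits on exactly this dichotomy.
\begin{itemize}
\item In the ``trapped'' case with $H$ finite, $G$ is locally finite, hence amenable, and \cite[Corollary~7.17]{douchamelleraytsankov2026} applies.
\item With $H$ infinite, a different neighbourhood is needed. Take an SFT cover $V\subseteq B^H$ of $S_H=\overline{H\cdot\chi_{\{1_H\}}}$, which is sofic by Dahmani--Yaman, and a symbol $p_*$ lying over the special point. No minimal $H$-subshift of $V$ can contain $p_*$. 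The neighbourhood is the clopen set of $G$-subshifts $Y\subseteq V^\uparrow$ with $p_*\in Y_{\{1_G\}}$.
\end{itemize}
The idea you are missing is the following. For minimal $Y$ and $[K:H]<\infty$, take an $N$-minimal subflow for the finite-index normal core $N$ of $H$ in $K$. Its translates decompose $Y\uphar K$ into finitely many $H$-minimal pieces. One of these restricts to a minimal $H$-subshift of $V$ carrying $p_*$, a contradiction. Your $X_0$ is in fact $S_G$, but $\mathcal N^E_{X_0}$ retains only its $E$-language, which does not encode this non-local constraint. You give no argument that $\mathcal N^E_{X_0}$ avoids minimal sofic subshifts in the trapped case.
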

	
	\begin{proof}
		If $G$ is finitely generated, then it is virtually free, and Corollary~\ref{cor:virtually-free} gives \STRP. Assume that $G$ is not finitely generated.
		
		First suppose that every finitely generated $H\leq G$ is contained in a finitely generated $K\leq G$ with $[K:H]=\infty$. Let $X$ be projectively isolated. By \cite[Theorem~7.14]{douchamelleraytsankov2026}, $X$ is minimal and sofic. Choose $H$ as in Lemma~\ref{lem:sofic-coinduction}, and choose a finitely generated $K\geq H$ with $[K:H]=\infty$. Since $K$ is virtually free, \cite[Theorem~1.1]{minasyan2026} gives $H\leq L\leq K$ with $[K:L]<\infty$ and a retraction $\rho:L\to H$. The equality $L=H$ would imply $[K:H]<\infty$, so $L\neq H$. Lemma~\ref{lem:sofic-coinduction} gives that $X$ is coinduced from $X\uphar L$ and $X\uphar L$ is coinduced from $X\uphar H$. Since $X$ is minimal, \cite[Lemma~7.1]{douchamelleraytsankov2026} implies that $X\uphar L$ is minimal.
		
		For $x\in X\uphar H$, define $\Delta(x)\in A^L$ by $\Delta(x)(\ell)=x(\rho(\ell))$. If $t\in L$, then the $tH$-slice of $\Delta(x)$ is given by $\Delta(x)_t(h)=x(\rho(t)h)=(\sigma_H(\rho(t)^{-1})x)(h)$, so coinduction gives $\Delta(x)\in X\uphar L$. Moreover, for $\ell_0\in L$, $\sigma_L(\ell_0)\Delta(x)=\Delta(\sigma_H(\rho(\ell_0))x)$. Thus $\Delta(X\uphar H)$ is a nonempty closed $L$-invariant subset of $X\uphar L$.
		
		Suppose that $X$ is not a singleton. Since $X$ is coinduced from $X\uphar H$, the subshift $X\uphar H$ is not a singleton. By $H$-invariance, choose $x_0,x_1\in X\uphar H$ with $x_0(1_H)\neq x_1(1_H)$. Since $\rho\uphar H=\operatorname{id}_H$ and $L\neq H$, the kernel of $\rho$ is nontrivial; fix $1_L\neq k\in\ker\rho$. Then $k\notin H$, and coinduction gives $y\in X\uphar L$ whose $H$-slice is $x_0$ and whose $kH$-slice is $x_1$. Hence $y(1_L)=x_0(1_H)\neq x_1(1_H)=y(k)$, whereas $\Delta(x)(1_L)=\Delta(x)(k)=x(1_H)$ for every $x\in X\uphar H$. Thus $\Delta(X\uphar H)$ is proper, contradicting minimality of $X\uphar L$. Every projectively isolated $G$-subshift is therefore a singleton. The basic neighborhood $\mathcal N_{\{0,1\}^G}^{\{1_G\}}$ contains no singleton subshift, so projectively isolated subshifts are not dense in $\Sub_G(\{0,1\})$. By \cite[Theorem~3.1]{doucha2024}, $G$ does not have \STRP.
		
		Consider now the complementary case. There is a finitely generated $H\leq G$ such that
		\begin{equation}\label{eq:lvf-trapped}
			[K:H]<\infty
			\
			\text{for every finitely generated }H\leq K\leq G.
		\end{equation}
		If $H$ is finite and $K_0\leq G$ is finitely generated, then $\langle H,K_0\rangle$ is finitely generated and \eqref{eq:lvf-trapped} gives $[\langle H,K_0\rangle:H]<\infty$. Thus $\langle H,K_0\rangle$, and hence $K_0$, is finite. The group $G$ is locally finite and therefore amenable. Since it is not finitely generated, \cite[Corollary~7.17]{douchamelleraytsankov2026} gives that $G$ does not have \STRP.
		
		We now use the special-symbol mechanism from
		\cite[Section~8.2]{douchamelleraytsankov2026}. Following \cite[Definition~8.5]{douchamelleraytsankov2026}, if $\Gamma$ is a group and $\Lambda\leq\Gamma$, let $\chi_\Lambda\in 2^\Gamma$ denote the characteristic function of $\Lambda$. The group $\Gamma$ has the \emph{special symbol property relative to $\Lambda$} if the subshift $\overline{\Gamma\cdot\chi_\Lambda}$ is sofic, where $\Gamma$ acts by the shift $\sigma_\Gamma$. The \emph{special symbol property} is the case $\Lambda=\{1_\Gamma\}$.
		
		Assume that $H$ is infinite. Since $H$ is finitely generated and $G$ is locally virtually free, $H$ is virtually free and hence hyperbolic. By \cite[Proposition~4.1]{dahmaniyaman2008}, $H$ has the special symbol property. Thus
		$S_H:=\overline{H\cdot\chi_{\{1_H\}}}
		=\{0^H\}\cup\{\chi_{\{h\}}:h\in H\}$
		is sofic. After enlarging the alphabet of an SFT cover if necessary, choose a finite alphabet $B$, an SFT $V\subseteq B^H$, and a surjection $q:B\to\{0,1\}$ such that $q^H(V)=S_H$. Choose $v_*\in V$ with $q^H(v_*)=\chi_{\{1_H\}}$, and let $p_*=v_*\uphar\{1_H\}\in B^{\{1_H\}}$. The use of a pattern in an SFT cover to detect the distinguished point of the
		special-symbol factor is the same mechanism used in the proof of
		\cite[Proposition~8.6]{douchamelleraytsankov2026}; here we use minimality,
		rather than recurrence, to obtain the contradiction.
		
		No minimal $H$-subshift $W\subseteq V$ satisfies $p_*\in W_{\{1_H\}}$. Suppose otherwise, and choose $w\in W$ with $w\uphar\{1_H\}=p_*$. Then $q^H(w)(1_H)=1$, so $q^H(w)=\chi_{\{1_H\}}$, since this is the unique point of $S_H$ taking the value $1$ at $1_H$. The set $q^H(W)$ is closed and $H$-invariant and contains $\chi_{\{1_H\}}$, so $S_H\subseteq q^H(W)$. Since $q^H(W)\subseteq S_H$, one has $q^H(W)=S_H$. But $q^H(W)$ is minimal, whereas $\{0^H\}$ is a proper nonempty subshift of $S_H$, a contradiction.
		
		Write $V=[E,\mathcal E]_H$ and let $V^\uparrow=[E,\mathcal E]_G$. Define
		$\mathcal U=\{Y\in\Sub_G(B):Y\subseteq V^\uparrow,\ p_*\in Y_{\{1_G\}}\}$.
		This set is nonempty: Lemma~\ref{lem:free-extension} gives $y\in V^\uparrow$ with $y\uphar H=v_*$, so $p_*\in(V^\uparrow)_{\{1_G\}}$. It is clopen. Indeed, $Y\subseteq V^\uparrow$ if and only if $Y_E\subseteq\mathcal E$, while $p_*\in Y_{\{1_G\}}$ depends only on the $\{1_G\}$-language; each condition is a finite union of basic clopen conditions.
		
		Suppose that $Y\in\mathcal U$ is projectively isolated. By \cite[Theorem~7.14]{douchamelleraytsankov2026}, $Y$ is minimal and sofic. Choose $H_0$ as in Lemma~\ref{lem:sofic-coinduction} and let $K=\langle H,H_0\rangle$. Then $K$ is finitely generated, so \eqref{eq:lvf-trapped} gives $[K:H]<\infty$. Lemma~\ref{lem:sofic-coinduction} gives that $Y$ is coinduced from $Y\uphar K$. By \cite[Lemma~7.1]{douchamelleraytsankov2026}, the subshift $Y\uphar K$ is minimal.
		
		Let $Z=Y\uphar K$ and $N=\bigcap_{k\in K}kHk^{-1}$, so $N\triangleleft K$. If $k_2=k_1h$ for some $h\in H$, then $k_2Hk_2^{-1}=k_1Hk_1^{-1}$. Thus the conjugate $kHk^{-1}$ depends only on the coset $kH$, and $[K:H]<\infty$ gives finitely many distinct conjugates, say $H_i=k_iHk_i^{-1}$ for $i<m$. Then $N=\bigcap_{i<m}H_i$. The map $K/N\to\prod_{i<m}K/H_i$ given by $gN\mapsto(gH_i)_{i<m}$ is injective, so $[K:N]\leq\prod_{i<m}[K:H_i]=[K:H]^m<\infty$.
		
		Choose an $N$-minimal (i.e. nonempty closed $ N $-invariant set without such proper subsets) subflow $C\subseteq Z$, and for $k\in K$ write $C_k=\sigma_K(k)C$. Since $N\triangleleft K$, each $C_k$ is $N$-minimal. Moreover, $k_1N=k_2N$ implies $C_{k_1}=C_{k_2}$. Thus $\{C_k:k\in K\}$ is finite. Its union is a nonempty closed $K$-invariant subset of the minimal $K$-flow $Z$, and therefore
		$Z=\bigcup_{k\in K}C_k$.
		
		For $k\in K$, let $D_k=\bigcup_{h\in H}C_{hk}$. Since the family $\{C_t:t\in K\}$ is finite, $D_k$ is a finite union of closed sets, and it is $H$-invariant. We claim that $D_k$ is $H$-minimal. Let $D\subseteq D_k$ be nonempty, closed, and $H$-invariant. Then $D\cap C_{hk}\neq\emptyset$ for some $h\in H$. Both sets are $N$-invariant, so $N$-minimality of $C_{hk}$ gives $C_{hk}\subseteq D$. For every $h'\in H$, $H$-invariance of $D$ gives
		$\sigma_K(h'h^{-1})C_{hk}=C_{h'k}\subseteq D$.
		Thus $D_k\subseteq D$, and hence $D=D_k$. Since $C_k\subseteq D_k$ for every $k\in K$, one has $Z=\bigcup_{k\in K}D_k$; only finitely many distinct $D_k$ occur. Thus $Z$ is a finite union of $H$-minimal subflows.
		
		Since $p_*\in Y_{\{1_G\}}$, choose $y\in Y$ with $y\uphar\{1_G\}=p_*$ and let $z=y\uphar K\in Z$. Choose $k\in K$ such that $z\in D_k$, and let $W=D_k\uphar H$. Restriction to $H$ is continuous and $H$-equivariant, so $H$-minimality of $D_k$ gives that $W$ is a minimal $H$-subshift. If $w\in W$, choose $x\in D_k$ with $x\uphar H=w$ and $\widetilde y\in Y$ with $\widetilde y\uphar K=x$. Since $Y\subseteq V^\uparrow$, Lemma~\ref{lem:free-extension} gives $\widetilde y\uphar H\in V$, so $w=x\uphar H=\widetilde y\uphar H\in V$. Thus $W\subseteq V$. Moreover, $z\in D_k$ and $z\uphar\{1_H\}=p_*$ give $p_*\in W_{\{1_H\}}$, contradicting the fact that no minimal $ H $-subshift of $ V $ contains $p_*$. Thus $\mathcal U$ contains no projectively isolated subshift, and \cite[Theorem~3.1]{doucha2024} gives that $G$ does not have \STRP.
	\end{proof}

	\section{Descriptive complexity}
	
	\subsection{Coding countable groups and finite symbolic data}
	
	We use the finite-level Borel hierarchy on Polish spaces. The classes $\bfsgm^0_1$ and $\bfpi^0_1$ are the open and closed sets. Recursively, $\bfsgm^0_{n+1}$ consists of countable unions of $\bfpi^0_n$ sets, $\bfpi^0_{n+1}$ consists of countable intersections of $\bfsgm^0_n$ sets, and $\bfdlt^0_n=\bfsgm^0_n\cap\bfpi^0_n$. A class $\mathcal A$ is \textbf{$\bfsgm^0_n$-hard} if every $\bfsgm^0_n$ subset of every zero-dimensional Polish space continuously reduces to $\mathcal A$; equivalently, it is enough to reduce one fixed $\bfsgm^0_n$-complete set. These definitions and the strictness of the Borel hierarchy are standard; see \cite{kechris1995}. All hardness statements below refer to continuous Wadge reductions.
	
	Let $F_\omega=\langle a_0,a_1,\ldots\rangle$ be the free group of countable rank. The product space $2^{F_\omega}$ is compact. Let
	$\mathrm{ctblgrp}=\NSub(F_\omega)$ be the set of normal subgroups of $F_\omega$. The subgroup and normality axioms are closed coordinate conditions; $\mathrm{ctblgrp}$ is a compact Polish subspace of $2^{F_\omega}$. For $N\in\mathrm{ctblgrp}$, write $G_N=F_\omega/N$ and
	\[
	\mathsf{STRP}=\{N\in\mathrm{ctblgrp}:G_N\text{ has \STRP}\}.
	\]
	
	Finite alphabets will be initial segments $A=\{0,\ldots,n-1\}$, $1\leq n<\omega$. A \textbf{finite symbolic code} is a triple $c=(A,\bar e,\mathcal E)$, where $\bar e=(e_0,\ldots,e_{m-1})\in F_\omega^m$ and $\mathcal E\subseteq A^m$. For $N\in\mathrm{ctblgrp}$, define
	\[
	\begin{aligned}
		Y_c(N) = \{x\in A^{F_\omega}: \forall u,v\in F_\omega \ &[u^{-1}v\in N \rightarrow x(u)=x(v)] \\
		\wedge \ \forall w\in F_\omega \ &[(x(w^{-1}e_i))_{i<m}\in\mathcal E]\}.
	\end{aligned}
	\]
	The first condition says that $x$ descends to a configuration on $G_N$; the second imposes the translated local rules coded by $(\bar e,\mathcal E)$.
	
	When nonempty, the set $Y_c(N)$ is the pullback to $F_\omega$ of the corresponding $G_N$-SFT. Let $\pi_N:F_\omega\to G_N=F_\omega/N$ be the quotient map. The pullback map $\pi_N^*:A^{G_N}\to A^{F_\omega},\ \pi_N^*(x)(w)=x(wN) $ is a homeomorphism onto
	\[
	\{y\in A^{F_\omega}: \forall u,v\in F_\omega \ [u^{-1}v\in N\rightarrow y(u)=y(v)]\}.
	\]
	It is equivariant in the sense that
	\[
	\forall w\in F_\omega \ [\pi_N^*(\sigma_{G_N}(wN)x)
	=
	\sigma_{F_\omega}(w)\pi_N^*(x)].
	\]
	Thus $Y_c(N)$ is the pullback subshift under $\pi_N^*$ of the $G_N$-SFT coded by $c$. In particular, when we write
	$Z\subseteq Y_c(N)$ and call $Z$ a subshift, we mean a nonempty closed
	$F_\omega$-invariant subset of $Y_c(N)$. Since every point of $Y_c(N)$ is
	constant on the cosets of $N$, such a $Z$ is equivalently the pullback of a
	unique $G_N$-subshift in $A^{G_N}$. We use these two descriptions
	interchangeably.
	
	For a finite tuple $\bar k=(k_0,\ldots,k_{\ell-1})\in F_\omega^\ell$, let
	$Y_c(N)_{\bar k}=\{(x(k_j))_{j<\ell}:x\in Y_c(N)\}$. Coordinates of $\bar e$ or $\bar k$ may represent the same element of $G_N$. Let $E_N=\{e_iN:i<m\}$ and define
	$\iota_N:A^{E_N}\to A^m$ by $\iota_N(r)(i)=r(e_iN)$. Its image consists exactly of the tuple-patterns compatible with the equalities modulo $N$.
	
	\begin{defn}\label{def:code-gr}
		The code $c=(A,\bar e,\mathcal E)$ is \textbf{globally realizable} over $G_N$ if
		$Y_c(N)_{\bar e}=\mathcal E\neq\emptyset$.
	\end{defn}
	
	When this holds, there is a unique $\mathcal E_N\subseteq A^{E_N}$ with $\iota_N(\mathcal E_N)=\mathcal E$, and $(A,E_N,\mathcal E_N)$ is globally realizable in the sense of Definition~\ref{def:globally-realizable}. Conversely, every globally realizable triple over $G_N$ is represented by a finite symbolic code after choosing word representatives in $F_\omega$.
	
	For a code $c$, a finite tuple $\bar k$, and $q\in A^{|\bar k|}$, define the \textbf{occurrence set}
	\[
	\operatorname{Occ}(c,\bar k,q)=\{N\in\mathrm{ctblgrp}:q\in Y_c(N)_{\bar k}\}.
	\]
	
	\begin{lem}\label{lem:occurrence-closed}
		Every set $\operatorname{Occ}(c,\bar k,q)$ is closed in $\mathrm{ctblgrp}$.
	\end{lem}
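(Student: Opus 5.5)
We prove closedness via a compactness argument on the product space $\mathrm{ctblgrp}\times A^{F_\omega}$. Consider the set
\[
\mathcal S=\{(N,x)\in\mathrm{ctblgrp}\times A^{F_\omega}: x\in Y_c(N)\ \wedge\ (x(k_j))_{j<\ell}=q\}.
\]
Then $\operatorname{Occ}(c,\bar k,q)$ is exactly the projection of $\mathcal S$ to the first coordinate. Since $A^{F_\omega}$ is compact, the projection $\mathrm{ctblgrp}\times A^{F_\omega}\to\mathrm{ctblgrp}$ is a closed map. It therefore suffices to show that $\mathcal S$ is closed.

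We write $\mathcal S$ as an intersection of closed sets, one for each elementary constraint. For fixed $u,v\in F_\omega$, the condition ``$u^{-1}v\in N\rightarrow x(u)=x(v)$'' is equivalent to ``$N(u^{-1}v)=0$ or $x(u)=x(v)$''. The set $\{N: N(u^{-1}v)=0\}$ is clopen in $2^{F_\omega}$, since it is a single-coordinate condition. The set $\{x:x(u)=x(v)\}$ is clopen in $A^{F_\omega}$, since it depends on two coordinates. A finite union of such products is clopen, hence closed.

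For fixed $w\in F_\omega$, the condition $(x(w^{-1}e_i))_{i<m}\in\mathcal E$ depends on only finitely many coordinates of $x$, so it defines a clopen set. Likewise, the condition $(x(k_j))_{j<\ell}=q$ is clopen. Intersecting over all $u,v,w$ (countably many conditions, though the count is irrelevant) shows that $\mathcal S$ is closed. Its projection is closed, which proves the lemma.

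The only step needing care is the closed-map property of the projection; the rest is bookkeeping. The closed-map property is the standard tube lemma consequence. Concretely, let $N_n\to N$ with $N_n\in\operatorname{Occ}$, and pick witnesses $x_n$ with $(N_n,x_n)\in\mathcal S$. Pass to a subsequence along which $x_n\to x$ in the compact space $A^{F_\omega}$. Closedness of $\mathcal S$ then gives $(N,x)\in\mathcal S$, so $q\in Y_c(N)_{\bar k}$ and $N\in\operatorname{Occ}(c,\bar k,q)$.

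This argument never needs nonemptiness of $Y_c(N)$ separately: membership in $\operatorname{Occ}$ already provides the witness.
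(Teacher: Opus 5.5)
Your proof is correct and matches the paper's argument: the paper also writes $\operatorname{Occ}(c,\bar k,q)$ as the projection of the closed set $\{(N,x)\in\mathrm{ctblgrp}\times A^{F_\omega}: x\in Y_c(N),\ x\uphar\bar k=q\}$ and concludes by compactness. The differences are cosmetic. You spell out why that set is closed, via the coordinatewise clopen conditions. You also use the closed-map property of projection along the compact factor $A^{F_\omega}$, whereas the paper uses compactness of the whole product.
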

	
	\begin{proof}
		We have that 
		\[
		\operatorname{Occ}(c,\bar k,q)=p_2[ \{(N,x)\in\mathrm{ctblgrp}\times A^{F_\omega}: x\in Y_c(N) \  \wedge \ x\uphar \bar{k} = q  \} ],
		\]
		where $ p_2 $ is the projection to the corresponding coordinate. As it's a projection of a closed subset of a compact space, it is compact and closed.
	\end{proof}
	
	For a code $c=(A,\bar e,\mathcal E)$, let $\operatorname{GR}(c)$ be the set of $N$ over which $c$ is globally realizable. The rule at the identity gives $Y_c(N)_{\bar e}\subseteq\mathcal E$, and
	\[
	\operatorname{GR}(c)=
	\begin{cases}
		\emptyset,&\mathcal E=\emptyset,\\[1mm]
		\displaystyle\bigcap_{q\in\mathcal E}\operatorname{Occ}(c,\bar e,q),&\mathcal E\neq\emptyset.
	\end{cases}
	\]
	The set $\operatorname{GR}(c)$ is closed.
	
	\subsection{The universal-image condition}
	
	Fix codes $c=(A,\bar e,\mathcal E)$ and $d=(B,\bar f,\mathcal F)$, and a map $p:B\to A$. We also write $p$ for the induced one-block map. Given a finite tuple $\bar k=(k_0,\ldots,k_{\ell-1})$ and $q\in A^\ell$, define a code $d^{\neg(\bar k,q)}$ as follows. If $|\bar f|=m$, its window is $\bar f\,{}^\frown\bar k$, and its allowed family is
	\begin{equation}\label{eq:s3-coding-04}
		\mathcal F^{\neg q}
		=\{b\in B^{m+\ell}:(b_i)_{i<m}\in\mathcal F
		\text{ and }(p(b_{m+j}))_{j<\ell}\neq q\}.
	\end{equation}
	For $N\in\mathrm{ctblgrp}$, write $Y=Y_d(N)$ and $Y^{\neg(\bar k,q)}=Y_{d^{\neg(\bar k,q)}}(N)$. When nonempty, the latter is the subshift obtained from $Y$ by forbidding $q$ at every translate in the $p$-image.
	
	\begin{lem}\label{lem:finite-obstruction}
		Assume $N\in\operatorname{GR}(d)$. The following are equivalent.
		\begin{enumerate}[label=(\arabic*),font=\upshape]
			\item Every subshift $Z\subseteq Y$ with $Z_{\bar f}=\mathcal F$ satisfies $p(Z)=p(Y)$.
			\item For every finite tuple $\bar k$ and every $q\in p(Y)_{\bar k}$,
			$(Y^{\neg(\bar k,q)})_{\bar f}\neq\mathcal F$.
		\end{enumerate}
	\end{lem}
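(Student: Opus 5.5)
We prove both directions by contraposition. The link between them is that, for a subshift $Z\subseteq Y$, the condition $p(Z)\neq p(Y)$ is witnessed by a finite pattern. Throughout, "subshift" means a nonempty closed $F_\omega$-invariant subset constant on $N$-cosets, as fixed above. We write $\pi$ for the projection to the first $m$ coordinates.

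\emph{(2)$\Rightarrow$(1).} Suppose $Z\subseteq Y$ is a subshift with $Z_{\bar f}=\mathcal F$ and $p(Z)\neq p(Y)$.

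1. Since $p(Z)\subseteq p(Y)$ are both closed and invariant (images of compact sets under the continuous equivariant one-block map), there is a point of $p(Y)$ outside $p(Z)$.
2. As $p(Z)$ is closed, some finite tuple $\bar k$ and pattern $q\in p(Y)_{\bar k}$ satisfy $q\notin p(Z)_{\bar k}$.
3. By invariance of $Z$, no translate $\sigma(w)z$ with $z\in Z$ has $p$-image restricting to $q$ on $\bar k$. Combined with $Z\subseteq Y$, this gives $Z\subseteq Y^{\neg(\bar k,q)}$. Here one checks from \eqref{eq:s3-coding-04} that the translated rule of $d^{\neg(\bar k,q)}$ at $w$ is exactly the rule of $d$ at $w$ together with the requirement that $(p(x(w^{-1}k_j)))_j\neq q$.
4. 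Hence $\mathcal F=Z_{\bar f}\subseteq (Y^{\neg(\bar k,q)})_{\bar f}\subseteq Y_{\bar f}=\mathcal F$, where the last equality uses $N\in\operatorname{GR}(d)$. This contradicts (2).

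\emph{(1)$\Rightarrow$(2).} Suppose $\bar k$ and $q\in p(Y)_{\bar k}$ satisfy $(Y^{\neg(\bar k,q)})_{\bar f}=\mathcal F$.

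1. Then $Y^{\neg(\bar k,q)}$ is nonempty, since $\mathcal F\neq\emptyset$.
2. It is closed, invariant, and consists of points constant on $N$-cosets, so it is a subshift $Z\subseteq Y$ with $Z_{\bar f}=\mathcal F$.
3. By construction, $q\notin p(Z)_{\bar k}$, using the translate at $w=1$. Since $q\in p(Y)_{\bar k}$, we get $p(Z)\neq p(Y)$, contradicting (1).

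The main obstacle is bookkeeping rather than substance. One has to verify that $Y^{\neg(\bar k,q)}$ is genuinely the subset of $Y$ on which $q$ is avoided at every translate. This means matching the indexing $x(w^{-1}f_i)$, $x(w^{-1}k_j)$ in the definition of $Y_c(N)$ with the shifted patterns $(\sigma(w)x)\uphar\bar k$. It also means checking that coordinates of $\bar k$ which coincide modulo $N$ cause no trouble: they do not, because every tuple-language is computed inside the descended configurations.
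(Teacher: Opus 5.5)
Your proposal is correct and matches the paper's proof: both directions go by contraposition. In one direction, a cylinder around a point of $p(Y)\setminus p(Z)$ yields $Z\subseteq Y^{\neg(\bar k,q)}$ and hence $\mathcal F=Z_{\bar f}\subseteq(Y^{\neg(\bar k,q)})_{\bar f}\subseteq Y_{\bar f}=\mathcal F$; in the other, $Z:=Y^{\neg(\bar k,q)}$ (nonempty because $\mathcal F\neq\emptyset$) is the counterexample to (1).
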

	
	\begin{proof}
		Suppose (1) fails. Choose a subshift $Z\subseteq Y$ with $Z_{\bar f}=\mathcal F$ and $p(Z)\subsetneqq p(Y)$, and fix $x\in p(Y)\setminus p(Z)$. Since $p(Z)$ is compact, there are a finite tuple $\bar k$ and $q=x\uphar \bar{k}$ such that the cylinder
		$\mathcal C(\bar k,q)=\{u\in A^{F_\omega}:u\uphar \bar{k}=q\}$
		satisfies $\mathcal C(\bar k,q)\cap p(Z)=\emptyset$. Thus $q\in p(Y)_{\bar k}$.
		
		For $z\in Z$ and $w\in F_\omega$, shift-invariance gives $\sigma_{F_\omega}(w)p(z)\in p(Z)$, so $ p(z)\uphar (w^{-1} \bar{k})\neq q $.
		Together with $Z\subseteq Y$, this is exactly the defining condition for $Z\subseteq Y^{\neg(\bar k,q)}$. Since $Y^{\neg(\bar k,q)}\subseteq Y$ by the definition of $\mathcal F^{\neg q}$, one has
		$\mathcal F=Z_{\bar f}\subseteq(Y^{\neg(\bar k,q)})_{\bar f}\subseteq Y_{\bar f}=\mathcal F$,
		contradicting (2).
		
		Suppose (2) fails. Then for some finite tuple $\bar k$ and $q\in p(Y)_{\bar k}$ one has $(Y^{\neg(\bar k,q)})_{\bar f}=\mathcal F$. Since $N\in\operatorname{GR}(d)$, $\mathcal F=Y_{\bar f}\neq\emptyset$, so $Z:=Y^{\neg(\bar k,q)}$ is nonempty. By the definition of $\mathcal F^{\neg q}$, $Z\subseteq Y$ and $q\notin p(Z)_{\bar k}$, while $q\in p(Y)_{\bar k}$. Thus $Z_{\bar f}=\mathcal F$ and $p(Z)\neq p(Y)$, contradicting (1).
	\end{proof}
	
	For $q\in A^{|\bar k|}$, let
	$\operatorname{Occ}_p(d,\bar k,q)=\{N:q\in p(Y_d(N))_{\bar k}\}$. The proof of Lemma~\ref{lem:occurrence-closed}, with an additional finite disjunction over the $p$-preimages of $q$, shows that this set is closed.
	
	Define
	\[
	\begin{aligned}
		\operatorname{Lang}(c,d,p)={}
		\bigcap_{q\in\mathcal E}\operatorname{Occ}_p(d,\bar e,q)
		\cap\bigcap_{q\in A^{|\bar e|}\setminus\mathcal E}
		\bigl(\mathrm{ctblgrp}\setminus\operatorname{Occ}_p(d,\bar e,q)\bigr).
	\end{aligned}
	\]
	This is a finite Boolean combination of closed sets. It follows that
	$\operatorname{Lang}(c,d,p)\in\bfdlt^0_2$.
	
	For a finite pair $(\bar k,q)$, define
	\[
	\begin{aligned}
		\operatorname{Bad}(d,p,\bar k,q)=
		\operatorname{GR}(d)\cap\operatorname{Occ}_p(d,\bar k,q)
		\cap\bigcap_{a\in\mathcal F}
		\operatorname{Occ}(d^{\neg(\bar k,q)},\bar f,a).
	\end{aligned}
	\]
	This set is closed. On $\operatorname{GR}(d)$, Lemma~\ref{lem:finite-obstruction} gives
	\[
	\operatorname{Univ}(d,p)=
	\operatorname{GR}(d)\cap
	\bigcap_{\bar k,q}
	\bigl(\mathrm{ctblgrp}\setminus\operatorname{Bad}(d,p,\bar k,q)\bigr)
	\in\bfpi^0_2.
	\]
	Finally, let
	\[
	\operatorname{Wit}(c,d,p)
	=\operatorname{Lang}(c,d,p)\cap\operatorname{Univ}(d,p).
	\]
	Then $\operatorname{Wit}(c,d,p)\in\bfpi^0_2$.
	
	\begin{thm}\label{thm:pi04}
		The collection $\mathsf{STRP}$ belongs to $\bfpi^0_4$.
	\end{thm}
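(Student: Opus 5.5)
The plan is to write $\mathsf{STRP}$ as a countable intersection, over input codes $c$, of sets of the form $\operatorname{GR}(c)\to\bigcup_{d,p}\operatorname{Wit}(c,d,p)$, using Theorem~\ref{thm:finite-characterization} transported to $F_\omega$-codes. Concretely, I claim
\[
\mathsf{STRP}=\bigcap_{c}\Bigl[\bigl(\mathrm{ctblgrp}\setminus\operatorname{GR}(c)\bigr)\cup\bigcup_{d,p}\bigl(\operatorname{GR}(d)\cap\operatorname{Wit}(c,d,p)\bigr)\Bigr],
\]
where $c=(A,\bar e,\mathcal E)$ ranges over all finite symbolic codes with $|A|\geq2$, $d=(B,\bar f,\mathcal F)$ over all finite symbolic codes, and $p$ over all surjections $B\to A$. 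There are only countably many codes, since $F_\omega$ is countable and alphabets are initial segments of $\omega$. For the complexity count: $\operatorname{GR}(c)$ is closed, so its complement is open. $\operatorname{Wit}(c,d,p)$ is $\bfpi^0_2$, and it already contains the conjunct $\operatorname{GR}(d)$ via $\operatorname{Univ}$. Hence the inner countable union is $\bfsgm^0_3$. The bracket is then a union of an open set and a $\bfsgm^0_3$ set, so it is $\bfsgm^0_3$, and the countable intersection is $\bfpi^0_4$.

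The substance is the equality, checked pointwise for each $N$. First, fix $N$ and use the dictionary after Definition~\ref{def:code-gr}. Each globally realizable triple $(A,E,\mathcal E_N)$ over $G_N$ comes from a code $c$ with $N\in\operatorname{GR}(c)$ (choose word representatives), and conversely each $c$ with $N\in\operatorname{GR}(c)$ yields the globally realizable triple $(A,E_N,\mathcal E_N)$. The same applies to $d$. Second, I translate \eqref{eq:s1-03} and \eqref{eq:s1-04} into codes. By the pullback $\pi_N^*$, which is an equivariant homeomorphism commuting with one-block maps, $p(Y_d(N))$ is the pullback of $p^{G_N}([F,\mathcal F]_{G_N})$. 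Then \eqref{eq:s1-03} becomes $p(Y_d(N))_{\bar e}=\mathcal E$, and this is exactly membership in $\operatorname{Lang}(c,d,p)$. For \eqref{eq:s1-04}: any $G_N$-subshift $Z$ with $Z_F=\mathcal F$ lies in $[F,\mathcal F]_{G_N}$, since $1\in$ the window is not needed here; the constraint is $F$-local and Fact~\ref{fact:normalization} applies. Its pullback is then a subshift of $Y_d(N)$ with $\bar f$-language $\mathcal F$. Conversely, subshifts of $Y_d(N)$ are pullbacks of $G_N$-subshifts. So \eqref{eq:s1-04} is condition (1) of Lemma~\ref{lem:finite-obstruction}, which on $\operatorname{GR}(d)$ is equivalent to $N\notin\operatorname{Bad}(d,p,\bar k,q)$ for all $(\bar k,q)$, that is, to $N\in\operatorname{Univ}(d,p)$. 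Combining these with Theorem~\ref{thm:finite-characterization} gives the displayed equality.

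I expect the main obstacle to be the bookkeeping in this translation rather than the complexity count. In particular, I must check that restricting $\bar k$ to tuples of words and $q$ to $A^{|\bar k|}$ loses nothing when coordinates coincide modulo $N$. Such $q$ that are incompatible with the identifications simply fail $\operatorname{Occ}_p$, so they never appear in $\operatorname{Bad}$. I must also confirm that the condition $\operatorname{Occ}_p(d,\bar k,q)$ inside $\operatorname{Bad}$ correctly encodes $q\in p(Y)_{\bar k}$.
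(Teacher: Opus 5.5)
Your proposal is correct and matches the paper's proof: the paper writes $\mathsf{STRP}$ as the same intersection over codes $c$ of $(\mathrm{ctblgrp}\setminus\operatorname{GR}(c))\cup\bigcup_{d,p}\operatorname{Wit}(c,d,p)$ via Theorem~\ref{thm:finite-characterization}, and then makes the same complexity count (open, $\bfpi^0_2$, $\bfsgm^0_3$, $\bfpi^0_4$). Your extra conjunct $\operatorname{GR}(d)$ is redundant, as you note, and your pointwise translation through $\pi_N^*$ and Lemma~\ref{lem:finite-obstruction} spells out what the paper compresses into the phrase ``Theorem~\ref{thm:finite-characterization} gives''.
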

	
	\begin{proof}
		Theorem~\ref{thm:finite-characterization} gives
		\begin{equation}\label{eq:s3-coding-09}
			\mathsf{STRP}=
			\bigcap_c\left[
			\bigl(\mathrm{ctblgrp}\setminus\operatorname{GR}(c)\bigr)
			\cup\bigcup_{d,p}\operatorname{Wit}(c,d,p)
			\right].
		\end{equation}
		Here $c$ ranges over all finite codes whose alphabet has at least two symbols, $d$ ranges over all finite codes, and $p$ ranges over the surjections from the alphabet of $d$ onto the alphabet of $c$. These are countable ranges.
		
		For fixed $c$, the complement of $\operatorname{GR}(c)$ is open, while the union over $(d,p)$ in \eqref{eq:s3-coding-09} is $\bfsgm^0_3$. The bracket is $\bfsgm^0_3$. Its countable intersection over $c$ is $\bfpi^0_4$.
	\end{proof}

	\subsection{A continuous free-rank reduction}
	
	Let $\mathrm{Grp}_\omega$ be the space of group structures with domain $\omega$. More explicitly,
	\begin{equation}\label{eq:s3-rank-01}
		\mathrm{Grp}_\omega=
		\{(\cdot,\operatorname{inv},e)\in
		\omega^{\omega\times\omega}\times\omega^{\omega}\times\omega:
		(\omega,\cdot,\operatorname{inv},e)\text{ is a group}\}.
	\end{equation}
	It carries the subspace topology inherited from the displayed product of
	discrete spaces. The group axioms define a closed subspace, so
	$\mathrm{Grp}_\omega$ is Polish. A basic open neighborhood prescribes
	finitely many values of multiplication, inverse, and the identity.
	
	Equip $2^{\omega\times\omega}$ with the product topology. For
	$x\in2^{\omega\times\omega}$ and $m\in\omega$, write
	$Z_m(x)=\{n\in\omega:x(m,n)=0\}$. Consider
	\begin{equation}\label{eq:s3-rank-02}
		\mathsf S_3=
		\{x\in2^{\omega\times\omega}:\exists m\,[Z_m(x)\text{ is infinite}]\}
		=
		\{x:\exists m\ \forall q\ \exists n\geq q\ [x(m,n)=0]\}.
	\end{equation}
	The set $\mathsf S_3$ is $\bfsgm^0_3$-complete for continuous reductions;
	see \cite[Section~23]{kechris1995}.
	
	Carson et al. use \emph{potential generators} to prove the effective
	$\Sigma^0_3$-completeness of finite generation among free groups
	\cite[proof of Proposition~2.4]{carson2012}; see also
	\cite[Proposition~3.1]{carson2012}. A potential generator is a basis symbol
	which remains free until a later stage identifies it with a word in previously
	retained generators. We call that identification a \emph{collapse}. The
	construction below is a continuous version, parametrized directly by
	$x\in2^{\omega\times\omega}$. We include the finite-stage labeling because the
	index-set result in \cite{carson2012} does not by itself give a continuous map
	into $\mathrm{Grp}_\omega$.
	
	If $\mathcal B$ is a set, $F(\mathcal B)$ denotes the free group with
	\textbf{free basis} $\mathcal B$: every nonidentity element has a unique
	reduced word over $\mathcal B\cup\mathcal B^{-1}$. For $t\in\mathcal B$,
	\begin{equation}\label{eq:s3-rank-03}
		F(\mathcal B)=F(\mathcal B\setminus\{t\})*\langle t\rangle.
	\end{equation}
	Every element outside $F(\mathcal B\setminus\{t\})$ has a unique reduced
	free-product normal form
	\begin{equation}\label{eq:s3-rank-04}
		u_0t^{m_1}u_1\cdots t^{m_r}u_r,
		\qquad
		r\geq1,\quad m_i\neq0,\quad
		u_i\in F(\mathcal B\setminus\{t\}),
	\end{equation}
	where $u_i\neq1$ for $0<i<r$. These are the usual reduced-word and
	free-product normal forms; see
	\cite[Chapter~I, Section~1 and Chapter~IV, Section~1]{lyndonschupp1977}.
	
	A collapse must preserve the inequalities already assigned to finitely many
	labeled elements. The next lemma supplies such a collapse by an explicit long
	word. It is the finite-diagram safeguard underlying the potential-generator
	argument of \cite[proof of Proposition~2.4]{carson2012}.
	
	\begin{lem}\label{lem:finite-collapse}
		Let $\mathcal B$ be a free basis containing distinct elements $a,b,t$,
		and let $\mathcal D\subseteq F(\mathcal B)\setminus\{1\}$ be finite.
		There is $N\geq1$ such that the homomorphism
		\begin{equation}\label{eq:s3-rank-05}
			\rho_N:F(\mathcal B)\longrightarrow F(\mathcal B\setminus\{t\}),
			\qquad
			\rho_N(t)=a^Nba^N,
			\qquad
			\rho_N(c)=c\quad(c\in\mathcal B\setminus\{t\}),
		\end{equation}
		satisfies $\rho_N(w)\neq1$ for every $w\in\mathcal D$.
	\end{lem}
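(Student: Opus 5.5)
We need $N$ such that substituting $t\mapsto a^Nba^N$ kills no element of $\mathcal D$. Take $N$ larger than every exponent of $a$ appearing in the reduced words of elements of $\mathcal D$; for instance, $N>2L$ where $L$ is the maximal reduced length of an element of $\mathcal D$. Elements of $\mathcal D\cap F(\mathcal B\setminus\{t\})$ are fixed by $\rho_N$, hence remain nontrivial. So it suffices to treat $w\in\mathcal D$ outside $F(\mathcal B\setminus\{t\})$, written in the normal form \eqref{eq:s3-rank-04} as $w=u_0t^{m_1}u_1\cdots t^{m_r}u_r$ with $r\geq1$.

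Apply $\rho_N$ to get $u_0(a^Nba^N)^{m_1}u_1\cdots(a^Nba^N)^{m_r}u_r$. For $m>0$ the power $(a^Nba^N)^m$ reduces freely to $a^Nba^{2N}b\cdots a^{2N}ba^N$. For $m<0$ it reduces to $a^{-N}b^{-1}a^{-2N}\cdots b^{-1}a^{-N}$. Each power contains $|m|$ letters $b^{\pm1}$, and they are separated by $a$-blocks of length $2N$. Each $u_i$ is a reduced word of length at most $L$ over $\mathcal B\setminus\{t\}$.

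Next we perform free cancellation at each junction between a $u_i$ and an adjacent block $a^{\pm N}$. Since $|u_i|\leq L<N$, cancellation at one junction can consume at most $|u_i|$ letters of $a$ from the block. The middle word $u_i$ ($0<i<r$) sits between two $a$-blocks. It could in principle be entirely consumed only if $u_i$ is a power of $a$, say $u_i=a^s$ with $0<|s|\leq L$. In that case it merges the blocks into $a^{\pm N+s\pm N}$. This exponent is nonzero, since it is either $\pm2N+s$ or $s$, both nonzero because $u_i\neq1$ and $|s|<2N$.

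In every case, each letter $b^{\pm1}$ coming from a $t$-substitution survives. It is flanked by $a$-blocks of length at least $N-L>0$ that are never fully cancelled, so no $b$ can meet an adjacent $b^{\mp1}$. Cancellation inside $u_i$ is impossible because $u_i$ is reduced. Therefore the reduced form of $\rho_N(w)$ contains at least $\sum|m_i|\geq1$ occurrences of $b^{\pm1}$ coming from the substitutions, so $\rho_N(w)\neq1$.

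The main obstacle is making this cancellation bookkeeping rigorous: we must show that the protective $a$-blocks can never be fully eaten, even when some $u_i$ is a power of $a$ or begins or ends with $a^{\pm1}$, and that letters $b$ inside the $u_i$ cannot pair off with the inserted ones. A clean way to do this is to view the rewritten word as an alternating product of syllables and argue syllable-by-syllable, using $N>L$ to guarantee that each inserted $b$ remains separated from its neighbours by a nonempty $a$-syllable. This gives the nontrivial reduced word required.
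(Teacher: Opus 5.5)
Your proposal is correct and takes essentially the same route as the paper: the free-product normal form, $N$ larger than the syllable lengths, and a junction-by-junction cancellation analysis that splits on whether an internal $u_i$ lies in $\langle a\rangle$. The paper makes your ``syllable-by-syllable'' step explicit by writing $u_i=a^pva^q$, with $v$ nonempty and neither beginning nor ending in $a^{\pm1}$, and $u_0=v_0a^p$ at the endpoint.

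One small imprecision: when $u_i=a^s$ and $m_i,m_{i+1}$ have opposite signs, the block left between the two $b$-letters is $a^s$. This block is nonempty but shorter than $N-L$. Your closing phrase ``flanked by $a$-blocks of length at least $N-L$'' should therefore read ``flanked by nonempty $a$-blocks,'' which your own exponent computation already guarantees.
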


	\begin{proof}
		Write $K=F(\mathcal B\setminus\{t\})$. Every element of
		$\mathcal D\cap K$ is fixed by $\rho_N$, so it is enough to consider
		$w\in\mathcal D\setminus K$.
		
		Since
		$F(\mathcal B)=K*\langle t\rangle$, the normal form theorem for free
		products gives a unique expression
		\begin{equation}\label{eq:s3-rank-06}
			w=u_0t^{m_1}u_1\cdots t^{m_r}u_r
		\end{equation}
		with the following properties: $r\geq1$, each
		$m_i\in\bbz\setminus\{0\}$, $u_i\in K$ for $0\leq i\leq r$, 
		$u_i\neq1$ for $0<i<r$, if any, and the endpoint terms $u_0,u_r$ are allowed
		to be $1$.
		
		For $u\in K$, let $|u|$ denote the length of its reduced word over
		$(\mathcal B\setminus\{t\})\cup
		(\mathcal B\setminus\{t\})^{-1}$. Only finitely many $ u_i $'s occur in the normal forms of the finitely many elements of
		$\mathcal D\setminus K$. Define
		\begin{equation}\label{eq:s3-rank-07}
			L=
			1+\max\bigl(\{0\}\cup
			\{|u_i|:u_i\text{ occurs in one of these normal forms}\}\bigr).
		\end{equation}
		Thus $|u_i|<L$ for every syllable which occurs.
		
		For $m\neq0$, let $\varepsilon=\operatorname{sgn}(m)$. Since
		$\rho_N(t)=a^Nba^N$, direct multiplication gives
		\begin{equation}\label{eq:s3-rank-08}
			\rho_N(t^m)=
			a^{\varepsilon N}b^\varepsilon
			\bigl(a^{2\varepsilon N}b^\varepsilon\bigr)^{|m|-1}
			a^{\varepsilon N}.
		\end{equation}
		Fix $N>L$.
		
		We first examine cancellation across a $u_i$, where $0<i<r$. Let
		$\varepsilon=\operatorname{sgn}(m_i)$ and
		$\delta=\operatorname{sgn}(m_{i+1})$. Formula \eqref{eq:s3-rank-08} shows that
		$\rho_N(t^{m_i})$ ends with
		$b^\varepsilon a^{\varepsilon N}$ and
		$\rho_N(t^{m_{i+1}})$ begins with
		$a^{\delta N}b^\delta$. Thus the only possible cancellation across
		$u_i$ occurs in
		$b^\varepsilon a^{\varepsilon N}u_i
		a^{\delta N}b^\delta$.
		
		Assume first that $u_i\notin\langle a\rangle$. We have $u_i=a^pva^q$, where $p,q\in\bbz$, the reduced word $v$ is nonempty,
		and its first and last letters are different from $a^{\pm1}$. Since $|u_i|<N$, we have
		$|p|<N$ and $|q|<N$. After free reduction the displayed portion
		becomes
		$b^\varepsilon a^{\varepsilon N+p}v
		a^{q+\delta N}b^\delta$.
		The two powers of $a$ are nontrivial, because
		$|\varepsilon N+p|\geq N-|p|>0$ and
		$|q+\delta N|\geq N-|q|>0$. Since the first and last letters of
		$v$ are not $a^{\pm1}$, no further cancellation occurs there.
		In particular, the two displayed $b$-letters cannot meet.
		
		Assume now that $u_i\in\langle a\rangle$. Since it's nonidentity, $u_i=a^q$ for some $q\neq0$. We have
		$|q|=|u_i|<N$. The portion across $u_i$ reduces to
		$b^\varepsilon a^{\varepsilon N+q+\delta N}b^\delta$.
		If $\varepsilon=-\delta$, the middle exponent equals $q$ and is
		nonzero. If $\varepsilon=\delta$, then
		$|\varepsilon N+q+\delta N|\geq2N-|q|>0$.
		Thus the two $b$-letters cannot meet in this case either.
		
		We next examine the left endpoint. Write
		$u_0=v_0a^p$, where $a^p$ is the maximal terminal power of $a$ in
		the reduced word for $u_0$; when $u_0=1$, take $v_0=1$ and $p=0$.
		Let $\varepsilon=\operatorname{sgn}(m_1)$. The product
		$u_0a^{\varepsilon N}$ reduces to
		$v_0a^{p+\varepsilon N}$. Since
		$|p|\leq|u_0|<N$, one has
		$|p+\varepsilon N|\geq N-|p|>0$. By maximality of $a^p$, no further
		cancellation occurs between $v_0$ and this nontrivial power of $a$.
		Consequently cancellation from the left cannot reach the first
		$b^\varepsilon$ in $\rho_N(t^{m_1})$. The right endpoint is analogous.
		
		If $r=1$, the two endpoint
		calculations already show that the $b^{\pm1}$-letters occurring in
		$\rho_N(t^{m_1})$ survive. If $r>1$, the preceding internal
		calculations show that cancellation across each $u_i$ cannot join
		the $b$-letters belonging to the values of adjacent $t$-powers, i.e. no
		$b^{\pm1}$-letter contributed by any $t^{m_i}$ is cancelled in the
		reduced form of $\rho_N(w)$. Hence $\rho_N(w)$ contains at least one
		$b^{\pm1}$-letter after free reduction, so $\rho_N(w)\neq1$.
		
		The integer $N>L$ was chosen from the uniform bound \eqref{eq:s3-rank-07}, and the
		same $N$ works simultaneously for every $w\in\mathcal D$.
	\end{proof}

	Fix the countable alphabet
	$\mathcal A=\{a,b\}\cup\{g_{k,j}:k,j\in\omega\}$ and an ordering of
	$\mathcal A$. Let $\operatorname{Red}(\mathcal A)$ be the set of finite
	reduced words over $\mathcal A\cup\mathcal A^{-1}$, and fix a bijection
	$v:\omega\to\operatorname{Red}(\mathcal A)$, written $v(i)=v_i$.
	
	Fix a bijection $\langle\cdot,\cdot\rangle:\omega^2\to\omega$ such that
	$\langle p,q\rangle\geq p$ for all $p,q\in\omega$. Define
	$\langle k,m,n\rangle=\langle k,\langle m,n\rangle\rangle$. Thus
	$(k,m,n)\mapsto\langle k,m,n\rangle$ is a bijection
	$\omega^3\to\omega$ and satisfies $\langle k,m,n\rangle\geq k$.
	
	For $x\in2^{\omega\times\omega}$, define
	$c_x:\omega\times\omega\to\omega$ by
	\begin{equation}\label{eq:s3-rank-09}
		c_x(k,s)=
		\bigl|\{(m,n)\in\omega^2:
		m\leq k,\ \langle k,m,n\rangle<s,\ x(m,n)=0\}\bigr|.
	\end{equation}
	Thus $c_x(k,s)$ counts the collapses of the $k$-th potential generator
	that occur before stage $s$. The inequality
	$\langle k,m,n\rangle\geq k$ ensures that, at a stage
	$s=\langle k,m,n\rangle$, the $k$-th potential generator has already
	been introduced.
	
	\begin{lem}\label{lem:carson-continuous}
		There is a continuous map
		$R:2^{\omega\times\omega}\to\mathrm{Grp}_\omega$ such that
		\begin{equation}\label{eq:s3-rank-10}
			R(x)\cong
			\begin{cases}
				F_{m(x)+2},&x\in\mathsf S_3,\text{ where }
				m(x)=\min\{m:Z_m(x)\text{ is infinite}\},\\
				F_\omega,&x\notin\mathsf S_3.
			\end{cases}
		\end{equation}
	\end{lem}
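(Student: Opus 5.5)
We will build $R(x)$ as the direct limit of a sequence of free groups $\Gamma_0\to\Gamma_1\to\cdots$, each with an explicitly maintained finite labeling. Let $\mathcal B_s$ be the current free basis at stage $s$. It consists of $a,b$ together with one current representative $g_{k,c_x(k,s)}$ for each potential generator $k<s$. A collapse at stage $s=\langle k,m,n\rangle$ with $x(m,n)=0$ kills the current symbol $g_{k,j}$, $j=c_x(k,s)$. It does so through a map $\rho_N$ as in Lemma~\ref{lem:finite-collapse}, with $t=g_{k,j}$. The map sends $t$ to a long word $a^Nba^N$ and immediately introduces the fresh symbol $g_{k,j+1}$ as the new $k$-th potential generator.

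The domain $\omega$ is labeled as we go. At stage $s$ we assign the next unused natural numbers to the reduced words $v_i$, $i<s$, that involve only symbols currently in $\mathcal B_s$. We then record the finite diagram: multiplication, inverse and identity among labels $<s$, computed in $F(\mathcal B_s)$. This is done only when the product is itself an already-labeled element, and otherwise the product is deferred to the stage when it gets labeled. For the labels to survive a collapse, every labeled element must be mapped to a labeled element, and distinct labeled elements must stay distinct. The second requirement is exactly Lemma~\ref{lem:finite-collapse}, applied with $\mathcal D=\{uw^{-1}: u\ne w\text{ labeled}\}$. We take $N$ to be the least value that works, which is computable from the finite data.

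Continuity is then immediate. Each finite piece of $R(x)$ (a value of $\cdot$, $\operatorname{inv}$, or $e$ on small labels) is fixed by some finite stage $s$, and the construction up to stage $s$ depends only on $x\uphar\{(m,n):\langle k,m,n\rangle<s\}$. The identity $e$ is labeled at stage $0$.

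For the isomorphism type, note that $R(x)$ is the direct limit of the free groups $F(\mathcal B_s)$ under the maps $\rho$. Each $\rho$ is injective on the finitely many labeled elements, and every element is eventually labeled. Suppose first that $m(x)$ exists. A collapse of generator $k$ is triggered by pairs $(m,n)$ with $m\le k$ and $x(m,n)=0$. For $k\ge m(x)$, generator $k$ therefore collapses infinitely often, so no symbol $g_{k,j}$ persists, and each is eventually rewritten as a word in $a$, $b$ and the generators $k'<k$. For $k<m(x)$, every $Z_{m'}$ with $m'\le k$ is finite, so $c_x(k,s)$ stabilizes and the final $g_{k,j}$ survives. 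In that case the limit group is free on $a$, $b$ and $g_{k,\lim}$ for $k<m(x)$, which is $F_{m(x)+2}$. If $x\notin\mathsf S_3$, every generator eventually stabilizes, and the limit is free of countably infinite rank.

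The main obstacle is verifying that the direct limit is free on the surviving symbols. Each stage map $F(\mathcal B_s)\to F(\mathcal B_{s+1})$ sends $F(\mathcal B_s)$ onto a subgroup containing the surviving basis elements. Once generators $<K$ have stabilized, the elements $a,b,g_{k}$ ($k<K$) stay part of a free basis at all later stages and are never rewritten. Every element of the limit is eventually a word in stabilized symbols together with symbols that will be collapsed into words of lower index. An induction on $k$ shows every collapsed symbol eventually lies in the subgroup generated by the stable ones. That subgroup is free because it is free at each stage and the maps restricted to it are identities.

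A secondary nuisance is ensuring that the partially defined multiplication table still yields a total group structure in the limit. This holds because every pair of labels eventually has its product labeled.
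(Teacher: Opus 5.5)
Your proposal is correct and is essentially the paper's proof. It uses the same stage bases with current symbols $g_{k,c_x(k,s)}$, the same collapse $g_{k,c}\mapsto a^Nba^N$ with the least $N$ chosen via Lemma~\ref{lem:finite-collapse} against the finite set of label differences, continuity from the fact that stage $s$ depends only on $x\upharpoonright Q_s$, and the identification of the limit with the free group on the eventually stable symbols (the paper packages this last step as explicit maps $\theta_s$ plus a shrinking-$g$-support argument). The one bookkeeping difference is that the paper labels $E_s^xE_s^x\cup(E_s^x)^{-1}$ at the next stage, so the operations are visibly total. Your deferral instead needs an observation you assert but do not prove: the $g$-support of a product of labels only shrinks and eventually consists of stable symbols, after which the product is a fixed reduced word $v_r$ and does get labeled. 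Note also that collapsed symbols go to words in $a,b$ alone, so no induction on $k$ is needed.
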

	
	\begin{proof}
		For $k\in\omega$, let
		\begin{equation}\label{eq:s3-rank-21}
			\mathcal I_k(x)=
			\{\langle k,m,n\rangle:m\leq k,\ x(m,n)=0\}.
		\end{equation}
		By the definition of $c_x$,
		$c_x(k,s)=|\mathcal I_k(x)\cap s |$. Since only the
		finitely many rows $0,\ldots,k$ occur in \eqref{eq:s3-rank-21},
		\begin{equation}\label{eq:s3-rank-22}
			\mathcal I_k(x)\text{ is finite}
			\ \longleftrightarrow\ 
			Z_m(x)\text{ is finite for every }m\leq k.
		\end{equation}
		
		Define
		\begin{equation}\label{eq:s3-rank-23}
			\mathcal S_x=
			\{a,b\}\cup
			\{g_{k,|\mathcal I_k(x)|}:
			\mathcal I_k(x)\text{ is finite}\}.
		\end{equation}
		Suppose $x\in\mathsf S_3$ and put
		\begin{equation}\label{eq:s3-rank-24}
			m(x)=\min\{m\in\omega:Z_m(x)\text{ is infinite}\}.
		\end{equation}
		By \eqref{eq:s3-rank-22}, $\mathcal I_k(x)$ is finite exactly when
		$k<m(x)$. Hence $F(\mathcal S_x)\cong F_{m(x)+2}$. If
		$x\notin\mathsf S_3$, every $Z_m(x)$ is finite, so every
		$\mathcal I_k(x)$ is finite and
		$F(\mathcal S_x)\cong F_\omega$.
		
		The remaining task is to realize $F(\mathcal S_x)$ by a group
		structure on $\omega$ which depends continuously on $x$. The set
		$\mathcal S_x$ itself is determined by the entire parameter $x$ and
		therefore cannot be used directly for this purpose. We approximate its
		basis symbols at finite stages.
		
		For $s\in\omega$, define
		\begin{equation}\label{eq:s3-rank-11}
			\mathcal B_s^x=
			\{a,b\}\cup\{g_{k,c_x(k,s)}:k\leq s\},
			\
			G_s^x=F(\mathcal B_s^x).
		\end{equation}
		Thus, for each $k\leq s$, the basis $\mathcal B_s^x$ contains one
		current symbol $g_{k,c_x(k,s)}$. The second index records the number
		of members of $\mathcal I_k(x)$ which have appeared before stage $s$.
		
		We first determine how this current symbol changes. Let
		$s=\langle k,m,n\rangle$. For every $\ell\leq s$, the count
		$c_x(\ell,s+1)$ differs from $c_x(\ell,s)$ only through pairs
		$(p,q)$ satisfying
		$\langle\ell,p,q\rangle=\langle k,m,n\rangle$. More explicitly,
		\[
		\begin{aligned}
			c_x(\ell,s+1)
			=\, &
			\bigl|\{(p,q):p\leq\ell,\
			\langle\ell,p,q\rangle<\langle k,m,n\rangle,\
			x(p,q)=0\}\bigr|\\
			+\, & \bigl|\{(p,q):p\leq\ell,\
			\langle\ell,p,q\rangle=\langle k,m,n\rangle,\
			x(p,q)=0\}\bigr|.
		\end{aligned}
		\]
		The first term is $c_x(\ell,s)$. The second term equals $1$ exactly when
		$\ell=k$, $m\leq k$, and $x(m,n)=0$. Hence
		$c_x(\ell,s+1)=c_x(\ell,s)+1$ exactly in this case, and otherwise
		$c_x(\ell,s+1)=c_x(\ell,s)$.
		
		Moreover, $c_x(s+1,s+1)=0$, since
		$\langle s+1,p,q\rangle\geq s+1$ for all $p,q\in\omega$. Put
		$c=c_x(k,s)$. It follows that
		\begin{equation}\label{eq:s3-rank-12}
			\mathcal B_{s+1}^x=
			\begin{cases}
				\mathcal B_s^x\cup\{g_{s+1,0}\},
				&m>k\text{ or }x(m,n)=1,\\
				(\mathcal B_s^x\setminus\{g_{k,c}\})
				\cup\{g_{k,c+1},g_{s+1,0}\},
				&m\leq k\text{ and }x(m,n)=0.
			\end{cases}
		\end{equation}
		
		The relation between the stage bases $\mathcal B_s^x$ and the target
		basis $\mathcal S_x$ can be read directly from $\mathcal I_k(x)$.
		Fix $k\in\omega$.
		
		Suppose first that
		$\mathcal I_k(x)=\{s_0<\cdots<s_{q-1}\}$ is finite. Since $ c_x(k,s)=|\mathcal I_k(x)\cap s | $,
		one has $c_x(k,s_j)=j$ for $0\leq j<q$. Hence, in the transition from
		$\mathcal B_{s_j}^x$ to $\mathcal B_{s_j+1}^x$, the symbol $g_{k,j}$
		is removed and replaced by $g_{k,j+1}$. After the last such stage,
		$c_x(k,s)=q$ for all sufficiently large $s$. Thus $g_{k,q}$ belongs
		to $\mathcal B_s^x$ for all sufficiently large $s$, while
		$g_{k,0},\ldots,g_{k,q-1}$ are successively removed.
		
		Suppose instead that $\mathcal I_k(x)$ is infinite, and write its
		increasing enumeration as $(s_j)_{j\in\omega}$. Then
		$c_x(k,s_j)=j$ for every $j\in\omega$. Hence $g_{k,j}$ is removed in the
		transition from $\mathcal B_{s_j}^x$ to $\mathcal B_{s_j+1}^x$ and
		replaced by $g_{k,j+1}$. Therefore no fixed symbol $g_{k,j}$ remains
		in all sufficiently late stage bases.
		
		Consequently,
		\[
		g_{k,j}\in\mathcal S_x
		\ \longleftrightarrow\ 
		(\exists s_0)(\forall s\geq s_0)\,
		g_{k,j}\in\mathcal B_s^x,
		\]
		and, when this occurs, necessarily
		$j=|\mathcal I_k(x)|$.
		
		In this precise sense, the bases $\mathcal B_s^x$ provide finite-stage
		approximations to the free basis $\mathcal S_x$: a current symbol is
		replaced whenever new evidence enters $\mathcal I_k(x)$, and it
		stabilizes exactly when $\mathcal I_k(x)$ is finite.
		
		We define by induction integers $n_s^x$ and injections
		$\lambda_s^x:\{0,\ldots,n_s^x-1\}\to G_s^x$. Set
		\begin{equation}\label{eq:s3-rank-13}
			n_0^x=3,\qquad
			\lambda_0^x(0)=1,\qquad
			\lambda_0^x(1)=a,\qquad
			\lambda_0^x(2)=b.
		\end{equation}
		Suppose $n_s^x$ and $\lambda_s^x$ have been defined. Let
		\begin{equation}\label{eq:s3-rank-14}
			\mathcal D_s^x=
			\{\lambda_s^x(i)\lambda_s^x(j)^{-1}:i<j<n_s^x\}.
		\end{equation}
		Injectivity of $\lambda_s^x$ gives
		$1\notin\mathcal D_s^x$.
		
		Write $s=\langle k,m,n\rangle$ and $c=c_x(k,s)$. If
		$m>k$ or $x(m,n)=1$, then
		$\mathcal B_s^x\subseteq\mathcal B_{s+1}^x$ by
		\eqref{eq:s3-rank-12}; let
		$\pi_s^x:G_s^x\to G_{s+1}^x$ be the homomorphism induced by this
		inclusion.
		
		Suppose $m\leq k$ and $x(m,n)=0$. For $N\geq1$, let
		$\rho_{s,N}:G_s^x\to G_{s+1}^x$ be the homomorphism which fixes
		every element of
		$\mathcal B_s^x\setminus\{g_{k,c}\}$ and sends
		$g_{k,c}$ to $a^Nba^N$. Its image is contained in the free subgroup
		of $G_{s+1}^x$ generated by
		$\mathcal B_s^x\setminus\{g_{k,c}\}$. By
		Lemma~\ref{lem:finite-collapse}, the set
		\[
		\{N\geq1:
		\rho_{s,N}(w)\neq1\text{ for every }w\in\mathcal D_s^x\}
		\]
		is nonempty. Let $N_s^x$ be its least member and put
		$\pi_s^x=\rho_{s,N_s^x}$.
		
		In either case, $\pi_s^x$ is injective on the range of
		$\lambda_s^x$. Indeed, if $i<j<n_s^x$, then
		$\lambda_s^x(i)\lambda_s^x(j)^{-1}\in\mathcal D_s^x$, whence
		\[
		\pi_s^x(\lambda_s^x(i))
		\neq
		\pi_s^x(\lambda_s^x(j)).
		\]
		
		Put
		\begin{equation}\label{eq:s3-rank-16}
			E_s^x=
			\pi_s^x\bigl(
			\lambda_s^x[\{0,\ldots,n_s^x-1\}]
			\bigr).
		\end{equation}
		Thus $|E_s^x|=n_s^x$. For a finite set $E$ in a group, write
		$E^{-1}=\{z^{-1}:z\in E\}$ and
		$EE=\{zw:z,w\in E\}$. Define
		\begin{equation}\label{eq:s3-rank-17}
			\begin{aligned}
				\mathcal C_s^x=\, &
				E_s^x\cup\{1\}\cup(E_s^x)^{-1}\cup E_s^xE_s^x\\
				\cup \, &
				\{v_i:i\leq s,\
				\operatorname{alph}(v_i)\subseteq\mathcal B_{s+1}^x\},
			\end{aligned}
		\end{equation}
		where each $v_i$ in the second line is identified with the element
		of $G_{s+1}^x$ represented by that reduced word.
		
		The set $\mathcal C_s^x$ is finite and contains $E_s^x$. Put
		$n_{s+1}^x=|\mathcal C_s^x|$. For $i<n_s^x$, define
		$\lambda_{s+1}^x(i)=\pi_s^x(\lambda_s^x(i))$. Every element of
		$\mathcal C_s^x\setminus E_s^x$ has a unique reduced word over
		$\mathcal B_{s+1}^x$, hence a unique index in the fixed enumeration
		$(v_i)_{i\in\omega}$. Order these finitely many elements by those
		indices and assign them to
		$n_s^x,\ldots,n_{s+1}^x-1$. Then $\lambda_{s+1}^x$ is injective and
		\begin{equation}\label{eq:s3-rank-18}
			\lambda_{s+1}^x\upharpoonright n_s^x
			=\pi_s^x\circ\lambda_s^x,
			\qquad
			\lambda_{s+1}^x[
			\{0,\ldots,n_{s+1}^x-1\}]
			=\mathcal C_s^x.
		\end{equation}
		
		For $s<t$, write
		$\pi_{s,t}^x=\pi_{t-1}^x\circ\cdots\circ\pi_s^x$, and put
		$\pi_{s,s}^x=\operatorname{id}_{G_s^x}$. Induction on $t-s$ using
		\eqref{eq:s3-rank-18} gives
		\[
		\lambda_t^x(i)
		=
		\pi_{s,t}^x(\lambda_s^x(i))
		\qquad
		(i<n_s^x).
		\]
		
		The sequence $(n_s^x)$ is unbounded. Indeed, each of the distinct
		reduced words $1,a,\ldots,a^M$ occurs in the fixed enumeration
		$(v_i)$. Choose $s$ at least as large as their finitely many
		indices $ i $. Since $a\in\mathcal B_{s+1}^x$, all these words belong to
		$\mathcal C_s^x$, so $n_{s+1}^x\geq M+1$. Therefore
		$\bigcup_s\{0,\ldots,n_s^x-1\}=\omega$.
		
		Define multiplication and inverse on $\omega$ by
		\begin{equation}\label{eq:s3-rank-19}
			\begin{aligned}
				i\cdot_x j=\ell
				&\longleftrightarrow
				\exists s\,
				[i,j,\ell<n_s^x\ \wedge\
				\lambda_s^x(i)\lambda_s^x(j)=\lambda_s^x(\ell)],\\
				\operatorname{inv}_x(i)=j
				&\longleftrightarrow
				\exists s\,
				[i,j<n_s^x\ \wedge\
				\lambda_s^x(i)^{-1}=\lambda_s^x(j)].
			\end{aligned}
		\end{equation}
		Use $0$ as the identity.
		
		These relations define total functions. Given $i,j$, choose $s$
		with $i,j<n_s^x$. The elements
		$\lambda_{s+1}^x(i)$ and $\lambda_{s+1}^x(j)$ belong to $E_s^x$.
		By \eqref{eq:s3-rank-17}, their product and the inverse of
		$\lambda_{s+1}^x(i)$ belong to $\mathcal C_s^x$, hence receive
		labels at stage $s+1$.
		
		The values are unique. Suppose, for example, that
		$i\cdot_x j=\ell$ is witnessed at stage $s$ and
		$i\cdot_x j=\ell'$ is witnessed at stage $t$. Choose
		$u\geq s,t$. Compatibility of the labelings gives
		\[
		\lambda_u^x(\ell)
		=
		\lambda_u^x(i)\lambda_u^x(j)
		=
		\lambda_u^x(\ell').
		\]
		Injectivity of $\lambda_u^x$ gives $\ell=\ell'$. The same argument
		applies to inverses.
		
		The operations satisfy the group axioms. For example, let
		$p=i\cdot_x j$, $q=j\cdot_x k$,
		$r=p\cdot_x k$, and $t=i\cdot_x q$. Choose one stage $s$ at which
		the four defining product equalities are all witnessed. Then
		\[
		\lambda_s^x(r)
		=
		(\lambda_s^x(i)\lambda_s^x(j))\lambda_s^x(k)
		=
		\lambda_s^x(i)(\lambda_s^x(j)\lambda_s^x(k))
		=
		\lambda_s^x(t),
		\]
		so $r=t$ by injectivity. This proves associativity. Since
		$\lambda_s^x(0)=1$ at every stage, $0$ is a two-sided identity;
		the inverse identities follow in the same way from
		\eqref{eq:s3-rank-19}. Denote the resulting group structure on
		$\omega$ by $R(x)$.
		
		We now identify $R(x)$ with $F(\mathcal S_x)$. The analysis above
		gives the following dichotomy for every
		$g_{k,j}\in\mathcal B_s^x$. If $g_{k,j}\in\mathcal S_x$, then
		$\mathcal I_k(x)$ is finite, $j=|\mathcal I_k(x)|$, and
		$g_{k,j}\in\mathcal B_u^x$ for every $u\geq s$. If
		$g_{k,j}\notin\mathcal S_x$, then there is a unique $r\geq s$ such
		that
		$g_{k,j}\in\mathcal B_r^x$ but
		$g_{k,j}\notin\mathcal B_{r+1}^x$. At this transition
		$g_{k,j}$ is replaced by $g_{k,j+1}$, and the definition of
		$\pi_r^x$ gives
		$\pi_r^x(g_{k,j})=a^{N_r^x}ba^{N_r^x}$.
		
		For each $s$, define a homomorphism
		$\theta_s:G_s^x\to F(\mathcal S_x)$ on the free basis
		$\mathcal B_s^x$ as follows. Set
		$\theta_s(a)=a$ and $\theta_s(b)=b$. If
		$g_{k,j}\in\mathcal B_s^x\cap\mathcal S_x$, set
		$\theta_s(g_{k,j})=g_{k,j}$. If
		$g_{k,j}\in\mathcal B_s^x\setminus\mathcal S_x$, let $r\geq s$
		be its unique removal stage described above and set
		$\theta_s(g_{k,j})=a^{N_r^x}ba^{N_r^x}$.
		
		Informally, $\theta_s$ records the eventual image of an element of
		$G_s^x$ under all subsequent transition maps. In this sense one may
		think of
		\[
		\theta_s
		\sim
		\cdots\circ\pi_{s+1}^x\circ\pi_s^x,
		\]
		after identifying the basis symbols which eventually stabilize with
		the corresponding elements of $\mathcal S_x$.
		
		We claim that $\theta_{s+1}\circ\pi_s^x=\theta_s$ for every $s$.
		It suffices to check the equality on the free basis $\mathcal B_s^x$. 
		
		Let $d\in\mathcal B_s^x$. If $\pi_s^x(d)=d$, then either
		$d\in\mathcal S_x$, in which case
		$\theta_{s+1}(d)=d=\theta_s(d)$, or
		$d$ is some $g_{k,j}\notin\mathcal S_x$. In the latter case $d$ has a unique
		removal stage $r>s$, and both $\theta_s(d)$ and $\theta_{s+1}(d)$ are
		equal to $a^{N_r^x}ba^{N_r^x}$.
		
		If $\pi_s^x(d)\neq d$, then $d=g_{k,c_x(k,s)}$ is the unique symbol
		removed at stage $s$. By definition,
		$\pi_s^x(d)=a^{N_s^x}ba^{N_s^x}$, and therefore
		\[
		\theta_{s+1}(\pi_s^x(d))
		=
		a^{N_s^x}ba^{N_s^x}
		=
		\theta_s(d).
		\]
		Thus $\theta_{s+1}\circ\pi_s^x=\theta_s$.
		
		For $i\in\omega$, choose $s$ with $i<n_s^x$ and define
		$\Theta_x(i)=\theta_s(\lambda_s^x(i))$. This does not depend on
		the choice of $s$. Indeed, if $s\leq t$ and $i<n_s^x$, then
		\[
		\theta_t(\lambda_t^x(i))
		=
		\theta_t(\pi_{s,t}^x(\lambda_s^x(i)))
		=
		\theta_s(\lambda_s^x(i)).
		\]
		
		The map $\Theta_x:R(x)\to F(\mathcal S_x)$ is a homomorphism.
		For example, suppose $i\cdot_x j=\ell$. By
		\eqref{eq:s3-rank-19}, choose $s$ such that
		$i,j,\ell<n_s^x$ and
		$\lambda_s^x(i)\lambda_s^x(j)=\lambda_s^x(\ell)$. Applying
		$\theta_s$ gives
		$\Theta_x(i)\Theta_x(j)=\Theta_x(\ell)$. The inverse relation is
		verified in the same way.

		We prove that $\Theta_x$ is injective. Suppose
		$\Theta_x(i)=\Theta_x(j)$, and choose $s$ with
		$i,j<n_s^x$. Put
		$d=\lambda_s^x(i)\lambda_s^x(j)^{-1}\in G_s^x$. Then
		$\theta_s(d)=1$.
		
		For $w\in G_r^x$, let
		$\operatorname{gsupp}_r(w)$ be the set of symbols $g_{k,j}$ occurring
		in the reduced word for $w$ over the free basis $\mathcal B_r^x$.
		
		For every $r$ and $w\in G_r^x$,
		\[
		\operatorname{gsupp}_{r+1}(\pi_r^x(w))
		\subseteq
		\operatorname{gsupp}_r(w).
		\]
		Indeed, by definition, $\pi_r^x$ either fixes every $g$-symbol, or replaces the unique
		symbol removed at stage $r$ by a word in $a^{\pm1},b^{\pm1}$; subsequent
		free reduction can only delete letters. Hence, for $s\leq t$,
		\[
		\operatorname{gsupp}_t(\pi_{s,t}^x(d))
		\subseteq
		\operatorname{gsupp}_s(d).
		\]
		
		The finite set
		$\operatorname{gsupp}_s(d)\setminus\mathcal S_x$ consists of symbols
		having unique removal stages. Choose $t\geq s$ strictly larger than all
		these stages. None of these symbols belongs to $\mathcal B_t^x$.
		Therefore every $g$-symbol occurring in the reduced word for
		$\pi_{s,t}^x(d)$ belongs to $\mathcal S_x$, and hence
		\[
		\pi_{s,t}^x(d)\in F(\mathcal B_t^x\cap\mathcal S_x).
		\]
		
		On $F(\mathcal B_t^x\cap\mathcal S_x)$, the map $\theta_t$ is the
		homomorphism induced by the inclusion
		$\mathcal B_t^x\cap\mathcal S_x\subseteq\mathcal S_x$, and is
		therefore injective. By compatibility,
		\[
		\theta_t(\pi_{s,t}^x(d))
		=
		\theta_s(d)
		=
		1.
		\]
		Thus $\pi_{s,t}^x(d)=1$. Using
		$\lambda_t^x=\pi_{s,t}^x\circ\lambda_s^x$ on the first $n_s^x$
		labels, we obtain
		$\lambda_t^x(i)=\lambda_t^x(j)$. Injectivity of $\lambda_t^x$
		gives $i=j$.
		
		We next prove that $\Theta_x$ is surjective. From
		\eqref{eq:s3-rank-13},
		$\Theta_x(1)=a$ and $\Theta_x(2)=b$. Let
		$g_{k,j}\in\mathcal S_x$. By the characterization of
		$\mathcal S_x$ established above, there is $s_0$ such that
		$g_{k,j}\in\mathcal B_s^x$ for every $s\geq s_0$. Choose
		$r\in\omega$ such that $g_{k,j}$ is
		$v_r$, and choose $s\geq\max\{s_0,r\}$. Then
		$\operatorname{alph}(v_r)=\{g_{k,j}\}\subseteq
		\mathcal B_{s+1}^x$. By \eqref{eq:s3-rank-17},
		$g_{k,j}=v_r\in\mathcal C_s^x$. Hence
		\eqref{eq:s3-rank-18} gives an $i<n_{s+1}^x$ such that
		$\lambda_{s+1}^x(i)=g_{k,j}$. Since
		$g_{k,j}\in\mathcal S_x$,
		$\Theta_x(i)=\theta_{s+1}(g_{k,j})=g_{k,j}$.
		
		Thus the range of $\Theta_x$ contains every member of the free
		basis $\mathcal S_x$, so $\Theta_x$ is surjective. Therefore
		$R(x)\cong F(\mathcal S_x)$. By the computation of
		$\mathcal S_x$ at the beginning of the proof,
		\begin{equation}\label{eq:s3-rank-25}
			R(x)\cong
			\begin{cases}
				F_{m(x)+2},&x\in\mathsf S_3,\\
				F_\omega,&x\notin\mathsf S_3.
			\end{cases}
		\end{equation}
		
		It remains to prove continuity. For $s\in\omega$, let
		\[
		Q_s=
		\{(m,n)\in\omega^2:
		(\exists k)\ \langle k,m,n\rangle<s\}.
		\]
		The set $Q_s$ is finite. Fix
		$s\in\omega$, and suppose that
		$x\upharpoonright Q_s=y\upharpoonright Q_s$. We prove by induction on $r\leq s$ that
		\[
		\mathcal B_r^x=\mathcal B_r^y,\qquad
		G_r^x=G_r^y,\qquad
		n_r^x=n_r^y,\qquad
		\lambda_r^x=\lambda_r^y.
		\]
		
		For $r=0$, one has $c_x(0,0)=c_y(0,0)=0$, so
		$\mathcal B_0^x=\mathcal B_0^y=\{a,b,g_{0,0}\}$ and
		$G_0^x=G_0^y$. Also $n_0^x=n_0^y=3$, and
		\eqref{eq:s3-rank-13} gives $\lambda_0^x=\lambda_0^y$.
		
		Now suppose $r<s$ and that the asserted equalities hold at stage $r$.
		We first prove $\pi_r^x=\pi_r^y  $. Write $r=\langle k,m,n\rangle$. Since $r<s$, one has
		$(m,n)\in Q_s$, so $x(m,n)=y(m,n)$. Moreover, for every
		$\ell\leq r$, every coordinate $(p,q)$ entering the definition of
		$c_x(\ell,r)$ satisfies
		$\langle\ell,p,q\rangle<r<s$, and hence $(p,q)\in Q_s$. Therefore
		$c_x(\ell,r)=c_y(\ell,r)$ for every $\ell\leq r$.
		
		It follows from \eqref{eq:s3-rank-12} that the same transition of stage
		bases occurs for $x$ and $y$, and in particular
		$\mathcal B_{r+1}^x=\mathcal B_{r+1}^y$. If no symbol is removed, then
		$\pi_r^x$ and $\pi_r^y$ are induced by the same inclusion. If a symbol
		is removed, then the same symbol is removed in both constructions.
		Since the induction hypothesis gives
		$\mathcal D_r^x=\mathcal D_r^y$, the same homomorphisms
		$\rho_{r,N}$ are tested against the same finite set. Hence
		$N_r^x=N_r^y$ and again $\pi_r^x=\pi_r^y$.
		
		Now $\pi_r^x=\pi_r^y$ and $\lambda_r^x=\lambda_r^y$ give
		$E_r^x=E_r^y$. Together with
		$\mathcal B_{r+1}^x=\mathcal B_{r+1}^y$, this gives
		$\mathcal C_r^x=\mathcal C_r^y$, hence
		$n_{r+1}^x=n_{r+1}^y$ and
		$\lambda_{r+1}^x=\lambda_{r+1}^y$. Also
		$G_{r+1}^x=G_{r+1}^y$. This completes the induction.
		
		Fix $i,j,\ell\in\omega$. For $s\in\omega$, let
		\[
		\mathscr M_{i,j}^{\ell}(s)
		=
		\left\{
		\eta\in2^{Q_s}:
		\begin{array}{l}
			\text{for some, equivalently every, }x\in2^{\omega\times\omega}
			\text{ with }x\upharpoonright Q_s=\eta,\\
			i,j,\ell<n_s^x
			\text{ and }
			\lambda_s^x(i)\lambda_s^x(j)=\lambda_s^x(\ell)
		\end{array}
		\right\}.
		\]
		This is well defined by the finite-stage dependence proved above. For
		$\eta\in2^{Q_s}$, write $ [\eta]
		=
		\{x\in2^{\omega\times\omega}:x\upharpoonright Q_s=\eta\} $. Define $ M_{i,j}^{\ell}=\{x\in2^{\omega\times\omega}:i\cdot_x j=\ell\} $. By \eqref{eq:s3-rank-19},
		\[
		M_{i,j}^{\ell}
		=
		\bigcup_{s\in\omega}
		\{x:i,j,\ell<n_s^x,\
		\lambda_s^x(i)\lambda_s^x(j)=\lambda_s^x(\ell)\}
		=
		\bigcup_{s\in\omega}
		\bigcup_{\eta\in\mathscr M_{i,j}^{\ell}(s)}
		[\eta].
		\]
		Thus $M_{i,j}^{\ell}$ is open. Since multiplication in $R(x)$ is
		total and single-valued, we have that $ 2^{\omega\times\omega}\setminus M_{i,j}^{\ell} = \bigcup_{q\in\omega\setminus\{\ell\}}M_{i,j}^{q} $. Hence the complement is open, and $M_{i,j}^{\ell}$ is clopen.
		
		Similarly, for $i,j,s\in\omega$, let
		\[
		\mathscr I_i^j(s)
		=
		\left\{
		\eta\in2^{Q_s}:
		\begin{array}{l}
			\text{for some, equivalently every, }x
			\text{ with }x\upharpoonright Q_s=\eta,\\
			i,j<n_s^x
			\text{ and }
			\lambda_s^x(i)^{-1}=\lambda_s^x(j)
		\end{array}
		\right\},
		\]
		and put $ I_i^j = \{x\in2^{\omega\times\omega}:\operatorname{inv}_x(i)=j\} $. Again by \eqref{eq:s3-rank-19}, $ I_i^j = \bigcup_{s\in\omega} \bigcup_{\eta\in\mathscr I_i^j(s)} [\eta] $. Thus $I_i^j$ is open. Since inversion is total and single-valued, $ 2^{\omega\times\omega}\setminus I_i^j = \bigcup_{q\in\omega\setminus\{j\}}I_i^q $. Hence $I_i^j$ is clopen.
		
		Finally, the identity of the group is always $ 0 $. Therefore every
		coordinate map
		$x\mapsto i\cdot_x j$, $x\mapsto\operatorname{inv}_x(i)$, and
		$x\mapsto e_{R(x)}$ is continuous, and $ R:2^{\omega\times\omega}\to\mathrm{Grp}_\omega $ is continuous.
	\end{proof}

	\begin{lem}\label{lem:group-to-nsub}
		There is a continuous map $ \kappa:\mathrm{Grp}_\omega\to\NSub(F_\omega) $ such that for every $G\in\mathrm{Grp}_\omega$ we have $F_\omega/\kappa(G)\cong G$.
	\end{lem}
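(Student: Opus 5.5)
The natural choice is the evaluation homomorphism sending generator $a_n$ to the element $n\in\omega$. For $G=(\cdot,\operatorname{inv},e)\in\mathrm{Grp}_\omega$, let $\varepsilon_G:F_\omega\to G$ be the unique homomorphism with $\varepsilon_G(a_n)=n$, and set
\[
\kappa(G)=\ker\varepsilon_G .
\]
Every $n\in\omega$ equals $\varepsilon_G(a_n)$, so $\varepsilon_G$ is onto. Hence $\kappa(G)$ is a normal subgroup of $F_\omega$, and the first isomorphism theorem gives $F_\omega/\kappa(G)\cong G$. The only real content is continuity.

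\emph{Continuity.} Since $\NSub(F_\omega)$ carries the subspace topology from $2^{F_\omega}$, it suffices to show that for each fixed reduced word $w\in F_\omega$ the set $\{G: w\in\kappa(G)\}$ is clopen in $\mathrm{Grp}_\omega$.

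1. Write $w=a_{n_1}^{\epsilon_1}\cdots a_{n_r}^{\epsilon_r}$ with $\epsilon_i=\pm1$.
2. Evaluate $w$ in $G$ by a finite computation. Replace each $a_{n_i}^{-1}$ by $\operatorname{inv}(n_i)$, then multiply left to right. The case $r=0$ returns the identity $e$.
3. This computation uses only finitely many values of $\operatorname{inv}$ and of $\cdot$, namely at most $r$ inverse values and $r-1$ products. Each argument is determined by the values already read.
4. Consequently, for each $k\in\omega$, the condition ``$\varepsilon_G(w)=k$'' holds on a union of basic open sets. Each such basic set prescribes the finitely many values read along the computation, ending with output $k$.
5. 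The condition $w\in\kappa(G)$ means $\varepsilon_G(w)=e$. It is therefore the union over $k$ of the open sets $\{G:\varepsilon_G(w)=k\}\cap\{G:e=k\}$, so it is open.
6. Its complement is the union over $k\neq j$ of $\{G:\varepsilon_G(w)=k,\ e=j\}$, which is also open. Hence the set is clopen.

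The map $w\mapsto\varepsilon_G(w)$ is a composition of finitely many evaluations of locally constant coordinate functions, so it is locally constant in $G$. I do not expect a real obstacle. The only point needing care is to state precisely that the basic open sets of $\mathrm{Grp}_\omega$ fix finitely many values of multiplication, inversion and identity, and that the word evaluation reads only such values. This gives local constancy, and hence continuity of each coordinate $G\mapsto\chi_{\kappa(G)}(w)$.
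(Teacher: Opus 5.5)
Your proposal is correct and follows essentially the same route as the paper: $\kappa(G)$ is the kernel of $a_n\mapsto n$, the isomorphism comes from the first isomorphism theorem, and continuity holds because each condition $w\in\kappa(G)$ depends only on finitely many values of multiplication, inverse, and identity, so it is clopen. Your explicit union over the possible values $k$ of the identity coordinate is just a more careful write-up of the paper's one-line clopenness claim.
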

	
	\begin{proof}
		For a group structure $G$ on $\omega$, define $ \rho_G:F_\omega\to G, \ \rho_G(a_n)=n,\ \kappa(G)=\ker(\rho_G) $. The map $\rho_G$ is onto, and the first isomorphism theorem gives $ F_\omega/\kappa(G)\cong G $. For a fixed word $w\in F_\omega$, membership
		$w\in\kappa(G)$ is decided by evaluating the finitely many
		multiplication and inverse operations occurring in $w$ and comparing
		the result with the identity of $G$. This is a clopen condition on
		$\mathrm{Grp}_\omega$.
	\end{proof}
	
	Let $\Theta=\kappa\circ R$. The preceding lemmas give
	$G_{\Theta(x)}\cong R(x)$ for every $x\in2^{\omega\times\omega}$.
	
	\begin{prop}\label{prop:sigma03-hard}
		The collection $\mathsf{STRP}$ is $\bfsgm^0_3$-hard.
	\end{prop}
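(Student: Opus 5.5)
The reduction will be $\Theta=\kappa\circ R:2^{\omega\times\omega}\to\mathrm{ctblgrp}$. It is continuous as a composition of the continuous maps from Lemma~\ref{lem:carson-continuous} and Lemma~\ref{lem:group-to-nsub}. Since $\mathsf S_3$ is $\bfsgm^0_3$-complete for continuous reductions, it suffices to prove
\[
x\in\mathsf S_3\ \longleftrightarrow\ \Theta(x)\in\mathsf{STRP}.
\]
Lemmas~\ref{lem:carson-continuous} and~\ref{lem:group-to-nsub} give $G_{\Theta(x)}\cong R(x)$. Moreover, \STRP is an isomorphism invariant, because the conjugation action $\Homeo(\mathfrak C)\curvearrowright\Act_G(\mathfrak C)$ only depends on $G$ up to isomorphism. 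So the equivalence reduces to two group-theoretic facts about free groups.

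First, if $x\in\mathsf S_3$, then $R(x)\cong F_{m(x)+2}$, a free group of finite rank at least $2$. Such a group is finitely generated and virtually free (it is itself free), so Corollary~\ref{cor:virtually-free} gives \STRP. Alternatively, Kwiatkowska's theorem applies directly, as in the proof of that corollary. Hence $\Theta(x)\in\mathsf{STRP}$.

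Second, if $x\notin\mathsf S_3$, then $R(x)\cong F_\omega$. This group is countable and locally free, and every subgroup of a free group is free by Nielsen--Schreier. So it is locally virtually free, but it is not finitely generated. Theorem~\ref{thm:locally-virtually-free} then shows that $F_\omega$ does not have \STRP, so $\Theta(x)\notin\mathsf{STRP}$. Together the two cases give $\Theta^{-1}(\mathsf{STRP})=\mathsf S_3$, which proves $\bfsgm^0_3$-hardness.

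There is no real obstacle left at this point. The delicate work, namely the continuity of the free-rank reduction and the identification of $R(x)$ up to isomorphism, is already done in Lemma~\ref{lem:carson-continuous}. The only points to state explicitly are that \STRP transfers along the isomorphism $G_{\Theta(x)}\cong R(x)$, and that $F_\omega$ meets the hypotheses of Theorem~\ref{thm:locally-virtually-free}: it is countable, every finitely generated subgroup is free of finite rank, and it is not finitely generated.
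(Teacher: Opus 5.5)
Your proposal is correct. It uses the same reduction $\Theta=\kappa\circ R$ and the same two-case check of $\Theta^{-1}(\mathsf{STRP})=\mathsf S_3$ as the paper. The positive case also agrees in substance: Corollary~\ref{cor:virtually-free} applied to a free group of finite rank reduces to Kwiatkowska's theorem, which the paper cites directly.

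The one genuine difference is how you show that $F_\omega$ lacks \STRP. The paper simply cites \cite[Section~6.1]{kechrisrosendal2007}, whereas you derive it from Theorem~\ref{thm:locally-virtually-free}. That derivation is legitimate. $F_\omega$ is countable, every finitely generated subgroup is free of finite rank, and $F_\omega$ is not finitely generated. Theorem~\ref{thm:locally-virtually-free} is proved earlier and does not use the present proposition, so there is no circularity. Concretely, $F_\omega$ falls in the first case of that proof: any finitely generated $H\leq\langle a_0,\ldots,a_n\rangle$ has infinite index in $\langle a_0,\ldots,a_{n+1}\rangle$.

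Your route makes the hardness result internal to the paper. The cost is that it now rests on the much heavier machinery behind Theorem~\ref{thm:locally-virtually-free}: the Doucha--Melleray--Tsankov characterization of projectively isolated subshifts over non-finitely-generated groups, the coinduction lemma, and a retraction theorem. The paper's appeal to one classical result keeps this proposition independent of those recent inputs.
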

	
	\begin{proof}
		Every finite-rank free group has \STRP by
		\cite[Theorem~1.1]{kwiatkowska2012}, while $F_\omega$ does not have
		\STRP by \cite[Section~6.1, p.~30]{kechrisrosendal2007}.
		Lemma~\ref{lem:carson-continuous} gives $ x\in\mathsf S_3 \ \longleftrightarrow\ \Theta(x)\in\mathsf{STRP} $. The map $\Theta$ is continuous and $\mathsf S_3$ is
		$\bfsgm^0_3$-complete.
	\end{proof}

\subsection{Finitely generated loci and the rank dichotomy}

Define
\[
\mathsf{FG}
=
\{N\in\mathrm{ctblgrp}:G_N\text{ is finitely generated}\},
\]
\[
\mathsf{FP}
=
\{N\in\mathrm{ctblgrp}:G_N\text{ is finitely presented}\}.
\]

The corresponding computable index-set complexity of finite generation is
known: Carson et al.\ prove a $\Sigma^0_3$-completeness result already
within the class of free groups \cite[Proposition~3.1]{carson2012}.
We record the continuous Borel version needed here, together with its
finite-presentation consequences.

\begin{prop}\label{prop:sigma03-loci}
	The following statements hold.
	\begin{enumerate}[label=(\arabic*),font=\upshape]
		\item For every finitely generated group $H$, the isomorphism class
		\[
		[H]_{\cong}
		:=
		\{N\in\mathrm{ctblgrp}:G_N\cong H\}
		\]
		belongs to $\bfsgm^0_3$.
		
		\item The sets $\mathsf{FG},\ \mathsf{FP},\ \mathsf{FP}\cap\mathsf{STRP} $ are $\bfsgm^0_3$-complete.
	\end{enumerate}
\end{prop}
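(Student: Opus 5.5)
For (1), fix a finite presentation-free description of $H$: choose generators $h_0,\dots,h_{k-1}$ of $H$. Then $G_N\cong H$ holds iff there exist words $w_0,\dots,w_{k-1}\in F_\omega$ such that the following hold.

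\begin{enumerate}[label=(\roman*),font=\upshape]
\item The elements $w_iN$ generate $G_N$. This means that for every $j$ there is a word $u$ in the $w_i$ with $a_j^{-1}u\in N$.
\item The map $h_i\mapsto w_iN$ extends to an isomorphism $H\to G_N$.
\end{enumerate}

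Condition (i) is $\bfpi^0_2$, since each membership $a_j^{-1}u\in N$ is clopen, so each clause is open and we intersect over $j$. For (ii), let $\pi:F_k\to H$ be the presentation map. Writing $\bar w(r)$ for the substitution of $r\in F_k$, (ii) says that for every $r\in F_k$, $\bar w(r)\in N\leftrightarrow \pi(r)=1$. For fixed $r$ this is clopen, so (ii) is closed; no recursiveness of the word problem is needed, since we work boldface. Thus $[H]_\cong=\bigcup_{\bar w}(\bfpi^0_2\cap\bfpi^0_1)\in\bfsgm^0_3$.

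For (2), upper bounds come first. $\mathsf{FG}$ is $\bigcup_{\bar w}$ of condition (i), hence $\bfsgm^0_3$. $\mathsf{FP}$ is the countable union over finite presentations $P$ of $[G_P]_\cong$, which lies in $\bfsgm^0_3$ by (1). $\mathsf{STRP}$ is only known to be $\bfpi^0_4$, so for $\mathsf{FP}\cap\mathsf{STRP}$ I would not intersect directly. Instead I would use that a finitely generated $G$ with \STRP is determined up to isomorphism by a countable list. Concretely, $\mathsf{FP}\cap\mathsf{STRP}=\bigcup_P\{[G_P]_\cong: G_P\text{ has \STRP}\}$, a countable union over those finite presentations whose group has \STRP. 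This is a countable union of $\bfsgm^0_3$ sets, since the selection of $P$ is external and not a definability issue. This is the key trick, and it works because boldface classes permit arbitrary countable index sets.

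For hardness, use $\Theta=\kappa\circ R$ from Lemma~\ref{lem:carson-continuous} and Lemma~\ref{lem:group-to-nsub}. If $x\in\mathsf S_3$, then $G_{\Theta(x)}\cong F_{m(x)+2}$, which is finitely presented, finitely generated, and has \STRP by \cite[Theorem~1.1]{kwiatkowska2012}. If $x\notin\mathsf S_3$, then $G_{\Theta(x)}\cong F_\omega$, which is not finitely generated, hence not finitely presented and not in the intersection. So $\Theta$ simultaneously reduces $\mathsf S_3$ to $\mathsf{FG}$, $\mathsf{FP}$, and $\mathsf{FP}\cap\mathsf{STRP}$. Since $\mathsf S_3$ is $\bfsgm^0_3$-complete, all three sets are $\bfsgm^0_3$-hard, and combined with the upper bounds they are complete.

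The main obstacle I expect is in (1): checking that "isomorphism onto $G_N$ via $h_i\mapsto w_iN$" is correctly captured. Injectivity and well-definedness are both encoded in the biconditional over all $r\in F_k$, and surjectivity is condition (i). I also need to confirm that the $\bfsgm^0_3$ bound for $\mathsf{FP}\cap\mathsf{STRP}$ requires no information about where \STRP sits beyond membership of individual isomorphism classes.
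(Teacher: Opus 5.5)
Your proposal is correct and follows the paper's proof essentially step by step. It uses the same characterization of $[H]_\cong$ via a tuple $\bar w$, with a $\bfpi^0_2$ surjectivity condition and a closed kernel condition; the same countable unions over finite presentations give the upper bounds for $\mathsf{FP}$ and $\mathsf{FP}\cap\mathsf{STRP}$; and the same reduction $\Theta$ gives hardness. One wording fix: the countability you actually use is that of finite presentations, as in your displayed formula. It is not the claim that finitely generated \STRP groups form countably many isomorphism types, and that claim is not known.
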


\begin{proof}
	We first prove (1). Fix a finitely generated group $H$ and choose a generating tuple $h_0,\ldots,h_{k-1}$ for $H$. Let $F_k=\langle x_0,\ldots,x_{k-1}\rangle$ be the free group on $k$ generators. Let $\rho_H:F_k\to H$ be the epimorphism determined by $\rho_H(x_i)=h_i$, and write $K_H=\ker(\rho_H)$. By the first isomorphism theorem, $H\cong F_k/K_H$.

	For $\bar w=(w_0,\ldots,w_{k-1})\in F_\omega^k$ and $N\in\mathrm{ctblgrp}$, define $ \varphi_{N,\bar w}:F_k\to G_N$, $ \varphi_{N,\bar w}(x_i)=w_iN $.
	We first characterize those $N$ for which $G_N\cong H$:
	\begin{equation}\label{eq:s3-rank-iso-characterization}
		G_N\cong H
		\ \longleftrightarrow\
		\exists\bar w\in F_\omega^k\ 
		\bigl[
		\varphi_{N,\bar w}\text{ is onto and }
		\ker(\varphi_{N,\bar w})=K_H
		\bigr].
	\end{equation}
	
	Assume first that $\theta:H\to G_N$ is an isomorphism. The composition $\theta\circ\rho_H:F_k\to G_N$ is surjective. For every $i<k$, choose $w_i\in F_\omega$ with $w_iN=\theta(h_i)$. For $\bar w=(w_0,\ldots,w_{k-1})$, the maps $\varphi_{N,\bar w}$ and $\theta\circ\rho_H$ agree on the free generators $x_0,\ldots,x_{k-1}$. The universal property of $F_k$ gives $\varphi_{N,\bar w}=\theta\circ\rho_H$. In particular, $\varphi_{N,\bar w}$ is onto. Since $\theta$ is injective, $\ker(\varphi_{N,\bar w})=\ker(\theta\circ\rho_H)=\ker(\rho_H)=K_H$.
	
	Conversely, suppose that $\bar w\in F_\omega^k$ satisfies that $\varphi_{N,\bar w}$ is onto and $\ker(\varphi_{N,\bar w})=K_H$. The first isomorphism theorem gives $G_N\cong F_k/\ker(\varphi_{N,\bar w})=F_k/K_H\cong H$. This proves \eqref{eq:s3-rank-iso-characterization}.
	
	We next calculate the complexity of the two conditions on the right-hand side of \eqref{eq:s3-rank-iso-characterization}.
	
	The map $\varphi_{N,\bar w}$ is onto exactly when every generator $a_nN$ of $G_N$ belongs to the subgroup generated by $w_0N,\ldots,w_{k-1}N$. Equivalently,
	\begin{equation}\label{eq:s3-rank-30}
		\forall n\in\omega\ \exists u\in F_k\ 
		\bigl[a_n^{-1}u(\bar w)\in N\bigr].
	\end{equation}
	Indeed, if $\varphi_{N,\bar w}$ is onto, then for every $n$ there is some $u\in F_k$ with $\varphi_{N,\bar w}(u)=a_nN$. Since $\varphi_{N,\bar w}(u)=u(\bar w)N$, this is equivalent to $a_n^{-1}u(\bar w)\in N$. In the other direction, \eqref{eq:s3-rank-30} says that every $a_nN$ belongs to the image of $\varphi_{N,\bar w}$. Since the elements $a_nN$, $n\in\omega$, generate $G_N$, the map is surjective. Formula \eqref{eq:s3-rank-30} is a countable intersection of open conditions, so $\{N:\varphi_{N,\bar w}\text{ is onto}\}\in\bfpi^0_2$.
	
	The equality $\ker(\varphi_{N,\bar w})=K_H$ is equivalent to
	\begin{equation}\label{eq:s3-rank-31}
		\forall u\in F_k\ 
		\bigl[
		u(\bar w)\in N
		\longleftrightarrow
		u\in K_H
		\bigr].
	\end{equation}
	For fixed $u\in F_k$, the statement $u\in K_H$ has a fixed truth value, while $u(\bar w)\in N$ is clopen in $\mathrm{ctblgrp}$. The condition inside the brackets in \eqref{eq:s3-rank-31} is accordingly clopen for each $u$, and the collection of $N$ satisfying \eqref{eq:s3-rank-31} is closed. Thus by \eqref{eq:s3-rank-iso-characterization}, we have $[H]_{\cong}\in\bfsgm^0_3$. This proves (1).
	
	We now prove (2). A quotient $G_N$ is finitely generated exactly when there are $k<\omega$ and $\bar w\in F_\omega^k$ such that $\varphi_{N,\bar w}:F_k\to G_N$ is onto. For each fixed $k$ and $\bar w$, the corresponding surjectivity condition belongs to $\bfpi^0_2$ by \eqref{eq:s3-rank-30}, and then $\mathsf{FG}\in\bfsgm^0_3$.
	
	There are only countably many finite group presentations, so there are only countably many isomorphism types of finitely presented groups. Part~(1) gives $\mathsf{FP}\in\bfsgm^0_3$, since $\mathsf{FP}$ is the union of the classes $[H]_{\cong}$ over all finitely presented groups $H$. Since \STRP is invariant under group isomorphism, the same argument gives $\mathsf{FP}\cap\mathsf{STRP}\in\bfsgm^0_3$ by restricting the union to the finitely presented groups having \STRP.
	
	We finish with $\bfsgm^0_3$-hardness. Let $\Theta:2^{\omega\times\omega}\to\mathrm{ctblgrp}$ be the continuous map constructed above. Lemma~\ref{lem:carson-continuous} and Lemma~\ref{lem:group-to-nsub} give $G_{\Theta(x)}\cong F_{m(x)+2}$ when $x\in\mathsf S_3$, and $G_{\Theta(x)}\cong F_\omega$ when $x\notin\mathsf S_3$.
	
	If $x\in\mathsf S_3$, then $G_{\Theta(x)}$ is a finite-rank free group. It is finitely generated and finitely presented, and it has \STRP. If $x\notin\mathsf S_3$, then $G_{\Theta(x)}\cong F_\omega$. The group $F_\omega$ is not finitely generated, so it is not finitely presented, and it does not have \STRP.
	
	We have, for each $\mathcal C\in\{\mathsf{FG},\mathsf{FP},\mathsf{FP}\cap\mathsf{STRP}\}$,
	$x\in\mathsf S_3\longleftrightarrow\Theta(x)\in\mathcal C$. The set $\mathsf S_3$ is $\bfsgm^0_3$-complete and $\Theta$ is continuous, so each of the three classes is $\bfsgm^0_3$-hard. Together with the upper bounds above, $\mathsf{FG}$, $\mathsf{FP}$, and $\mathsf{FP}\cap\mathsf{STRP}$ are $\bfsgm^0_3$-complete.
\end{proof}

Define
\[
\mathsf{VF}
=
\{N\in\mathrm{ctblgrp}:
G_N\text{ is finitely generated and virtually free}\}.
\]

\begin{cor}\label{cor:vf-complete}
	The collection $\mathsf{VF}$ is $\bfsgm^0_3$-complete and
	$\mathsf{VF}\subseteq\mathsf{STRP}$.
\end{cor}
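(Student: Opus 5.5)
The inclusion $\mathsf{VF}\subseteq\mathsf{STRP}$ is immediate from Corollary~\ref{cor:virtually-free}: if $N\in\mathsf{VF}$, then $G_N$ is finitely generated and virtually free, and hence has \STRP. The content of the corollary is therefore the upper bound $\mathsf{VF}\in\bfsgm^0_3$ together with hardness.

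For the upper bound I will apply Proposition~\ref{prop:sigma03-loci}(1). The plan is to write $\mathsf{VF}$ as the union of the classes $[H]_{\cong}$, where $H$ ranges over isomorphism types of finitely generated virtually free groups. The only point to check is that this range is countable. I would argue that every finitely generated virtually free group is finitely presented: a finite-index free subgroup of a finitely generated group is finitely generated, so it is a finite-rank free group, and finite presentability passes to finite-index overgroups. Since there are only countably many finite presentations, there are only countably many such $H$. So $\mathsf{VF}$ is a countable union of $\bfsgm^0_3$ sets and lies in $\bfsgm^0_3$. Alternatively, one can avoid the passage through finite presentability by noting $\mathsf{VF}\subseteq\mathsf{FP}$ and applying Proposition~\ref{prop:sigma03-loci}(1) only over finitely presented virtually free $H$, of which there are countably many. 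This countability observation is the only step with real content, and it is standard.

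For hardness I will reuse the continuous reduction $\Theta=\kappa\circ R$ from Lemma~\ref{lem:carson-continuous} and Lemma~\ref{lem:group-to-nsub}. If $x\in\mathsf S_3$, then $G_{\Theta(x)}\cong F_{m(x)+2}$, which is finitely generated and free, hence in $\mathsf{VF}$. If $x\notin\mathsf S_3$, then $G_{\Theta(x)}\cong F_\omega$, which is not finitely generated, hence not in $\mathsf{VF}$. Thus $x\in\mathsf S_3\leftrightarrow\Theta(x)\in\mathsf{VF}$. Since $\mathsf S_3$ is $\bfsgm^0_3$-complete, $\mathsf{VF}$ is $\bfsgm^0_3$-hard, and combined with the upper bound it is $\bfsgm^0_3$-complete. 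No step is a genuine obstacle; the argument is a direct assembly of the earlier results.
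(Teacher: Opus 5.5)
Your proposal is correct and follows the paper's proof: countability of isomorphism types via finite presentability, Proposition~\ref{prop:sigma03-loci}(1) for the $\bfsgm^0_3$ upper bound, the reduction $\Theta$ for hardness, and Corollary~\ref{cor:virtually-free} for the inclusion. The paper proves the finite-presentability step explicitly, using the normal core $K$ and an explicit finite presentation of $G$ in terms of a basis of $K$ and coset representatives, where you cite it as standard. Your ``alternative'' via $\mathsf{VF}\subseteq\mathsf{FP}$ is not a separate route, since that inclusion is the same finite-presentability fact.
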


\begin{proof}
	Every finitely generated virtually free group is finitely presented. Let $G$ be a finitely generated virtually free group, and choose a free subgroup $F\leq G$ of finite index. As in the proof of \ref{thm:locally-virtually-free}, let
	$K=\bigcap_{g\in G}gFg^{-1}$ be the normal core of $F$ in $G$. Since $F$ has finite index in $G$, it has only finitely many conjugates, so $K\triangleleft G$ also has finite index. Moreover, $K\leq F$. The subgroup $F$ is finitely generated because it has finite index in the finitely generated group $G$. Since $K$ has finite index in $F$, it is also finitely generated. By the Nielsen--Schreier theorem, $K$ is free \cite[Chapter~I, Section~3]{lyndonschupp1977}. Thus $K$ is a free group of finite rank.

	Write $Q=G/K$. Since $Q$ is finite, choose representatives
	$t_q\in G$ for $q\in Q$, with $t_{1_Q}=1_G$. Let $X$ be a finite
	free basis of $K$. For every $q\in Q$ and $x\in X$, the element
	$t_qxt_q^{-1}$ belongs to $K$ because $K\triangleleft G$; let
	$w_{q,x}(X)$ be the unique reduced word over $X\cup X^{-1}$
	representing it. For every $q,r\in Q$, the elements $t_qt_r$ and
	$t_{qr}$ represent the same coset of $K$, so
	$t_qt_rt_{qr}^{-1}\in K$; let $c_{q,r}(X)$ be the unique reduced
	word over $X\cup X^{-1}$ representing this element.
	
	Consider the presentation
	\begin{equation}\label{eq:s3-rank-34}
		\begin{aligned}
			\left\langle\right.
			X,(T_q)_{q\in Q}
			\ |\
			T_{1_Q}=1,\
			(T_qxT_q^{-1}&=w_{q,x}(X))_{\ q\in Q,\ x\in X},\\
			(T_qT_r&=c_{q,r}(X)T_{qr})_{q,r\in Q}
			\left.\right\rangle .
		\end{aligned}
	\end{equation}
	Let $P$ denote the group defined by the presentation in
	\eqref{eq:s3-rank-34}. Since all the defining relations hold in $G$,
	the assignments $x\mapsto x$ for $x\in X$ and $T_q\mapsto t_q$ for
	$q\in Q$ induce a homomorphism $\pi:P\to G$. This map is surjective.
	Indeed, for every $g\in G$, if $q=gK\in Q$, then
	$gt_q^{-1}\in K$, so $g=kt_q$ for some $k\in K$. Since $X$ is a
	basis of $K$, the element $k$ is represented by a word over
	$X\cup X^{-1}$ and belongs to the image of $\pi$.
	
	We prove that $\pi$ is injective. We first show that every element of $P$ has the form $u(X)T_q$, where $u(X)$ is a word over $X\cup X^{-1}$ and $q\in Q$. The relations imply $T_qx^\varepsilon=w_{q,x}(X)^\varepsilon T_q$ for $q\in Q$, $x\in X$, and $\varepsilon\in\{1,-1\}$, while $T_{q^{-1}}T_q=c_{q^{-1},q}(X)$ gives $T_q^{-1}=c_{q^{-1},q}(X)^{-1}T_{q^{-1}}$. Starting with an arbitrary word in the generators of $P$, first eliminate every occurrence of $T_q^{-1}$ by the latter relation. Then use $T_qx^\varepsilon=w_{q,x}(X)^\varepsilon T_q$ to move every letter from $X\cup X^{-1}$ to the left of every $T_q$. The remaining product of the $T_q$'s is reduced successively by $T_qT_r=c_{q,r}(X)T_{qr}$. After finitely many applications of these relations, the word has the form $u(X)T_q$.

	Let $p\in\ker(\pi)$, and choose such a representation
	$p=u(X)T_q$. Its image in $G$ is $u(X)t_q=1_G$. As $ u(X)\in K $, we have $ q=1_Q $, and then $p=u(X)$ in $P$. The equality
	$\pi(p)=1_G$ now says that the word $u(X)$ represents the identity in
	$K$. Since $X$ is a free basis of $K$, $u(X)$ freely reduces to the
	empty word. The same free reduction is valid in $P$, so $p=1_P$.
	Thus $\ker(\pi)=\{1_P\}$, and $\pi:P\to G$ is an isomorphism.
	
	Both $X$ and $Q$ are finite. The presentation
	\eqref{eq:s3-rank-34} has finitely many generators and finitely many
	relations, so $G$ is finitely presented.

	Thus there are only countably many finitely generated virtually free
	isomorphism types. Proposition~\ref{prop:sigma03-loci}(1) gives
	$\mathsf{VF}\in\bfsgm^0_3$.
	
	The reduction $\Theta$ proves hardness, since its positive outputs are
	finite-rank free groups and its negative output is $F_\omega$.
	Finally,
	$\mathsf{VF}\subseteq\mathsf{STRP}$ follows from
	Corollary~\ref{cor:virtually-free}.
\end{proof}

Define
\[
\mathsf{LVF}
=
\{N\in\mathrm{ctblgrp}:G_N\text{ is locally virtually free}\}.
\]
Theorem~\ref{thm:locally-virtually-free} and
Corollary~\ref{cor:virtually-free} give $ \mathsf{STRP}\cap\mathsf{LVF}=\mathsf{VF} $.

We can now state the consequences for the unresolved Borel rank separately.

\begin{cor}\label{cor:rank-dichotomy}
	The collection $\mathsf{STRP}$ does not belong to $\bfpi^0_3$.
	Its Borel rank is $3$ or $4$. If
	$\mathsf{STRP}\in\bfsgm^0_3$, then it is
	$\bfsgm^0_3$-complete.
\end{cor}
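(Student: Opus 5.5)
The plan is to derive all three assertions from Proposition~\ref{prop:sigma03-hard} and Theorem~\ref{thm:pi04}, combined with the standard facts about Wadge reductions and the strictness of the Borel hierarchy on uncountable Polish spaces \cite{kechris1995}.

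\emph{Failure of $\bfpi^0_3$.} Suppose toward a contradiction that $\mathsf{STRP}\in\bfpi^0_3$. The continuous map $\Theta=\kappa\circ R$ of Proposition~\ref{prop:sigma03-hard} satisfies $\mathsf S_3=\Theta^{-1}(\mathsf{STRP})$. Continuous preimages preserve the classes $\bfpi^0_n$, so we would get $\mathsf S_3\in\bfpi^0_3$. A $\bfsgm^0_3$-complete subset of $2^{\omega\times\omega}$ is not $\bfpi^0_3$. Otherwise every $\bfsgm^0_3$ subset of $2^{\omega\times\omega}$, being a continuous preimage of $\mathsf S_3$, would be $\bfpi^0_3$, so $\bfsgm^0_3=\bfdlt^0_3$ on the Cantor space. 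This contradicts the strictness of the hierarchy \cite[Section~22]{kechris1995}. Hence $\mathsf{STRP}\notin\bfpi^0_3$.

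\emph{Borel rank.} Define the Borel rank of $\mathsf{STRP}$ as the least $n$ with $\mathsf{STRP}\in\bfdlt^0_{n+1}$, or equivalently, in the paper's informal sense, the least $n$ with $\mathsf{STRP}\in\bfsgm^0_n\cup\bfpi^0_n$. Theorem~\ref{thm:pi04} gives $\mathsf{STRP}\in\bfpi^0_4$, so the rank is at most $4$. Because $\bfsgm^0_2\cup\bfpi^0_2\subseteq\bfpi^0_3$, the previous paragraph rules out rank $\leq2$. So the rank is $3$ or $4$. Rank $3$ occurs exactly when $\mathsf{STRP}\in\bfsgm^0_3$, since $\bfpi^0_3$ is excluded.

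\emph{Completeness under a $\bfsgm^0_3$ hypothesis.} If $\mathsf{STRP}\in\bfsgm^0_3$, then together with $\bfsgm^0_3$-hardness (Proposition~\ref{prop:sigma03-hard}) it is $\bfsgm^0_3$-complete by definition. Here $\mathrm{ctblgrp}$ is a compact zero-dimensional Polish space, and hardness was defined by reducing the single complete set $\mathsf S_3$, which the definition says is enough.

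The only point needing care is the convention for ``Borel rank''. I will state it explicitly, so that the claim ``$3$ or $4$'' is read through the classes $\bfsgm^0_n\cup\bfpi^0_n$. Otherwise the argument is a direct bookkeeping of the upper bound from Theorem~\ref{thm:pi04} against the hardness from Proposition~\ref{prop:sigma03-hard}.
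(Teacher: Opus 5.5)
Your proof is correct and is the paper's own argument: pulling $\mathsf{STRP}$ back along the continuous reduction $\Theta$ would put $\mathsf S_3$ in $\bfpi^0_3$, contradicting strictness, and the rank bound and the conditional completeness then follow from Theorem~\ref{thm:pi04} and Proposition~\ref{prop:sigma03-hard}. One small slip: the two conventions for Borel rank that you call equivalent are not equivalent, because $\bfdlt^0_{n+1}$ strictly contains $\bfsgm^0_n\cup\bfpi^0_n$ (e.g.\ place an open non-closed set and a closed non-open set in complementary clopen halves of $2^\omega$), so under the $\bfdlt$-convention rank $3$ means $\mathsf{STRP}\in\bfdlt^0_4$ rather than $\mathsf{STRP}\in\bfsgm^0_3$; the conclusion ``rank $3$ or $4$'' holds under either convention, so nothing breaks once you fix the $\bfsgm^0_n\cup\bfpi^0_n$ reading as you propose.
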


\begin{proof}
	If $\mathsf{STRP}$ were $\bfpi^0_3$, then its continuous preimage
	under $\Theta$ would place the $\bfsgm^0_3$-complete set
	$\mathsf S_3$ in $\bfpi^0_3$, contrary to the strictness of the
	Borel hierarchy.
	Theorem~\ref{thm:pi04} gives the rank alternative.
	If $\mathsf{STRP}\in\bfsgm^0_3$, then
	Proposition~\ref{prop:sigma03-hard} makes it
	$\bfsgm^0_3$-complete.
\end{proof}

\begin{cor}\label{cor:rank-four-consequences}
	If $\mathsf{STRP}\notin\bfsgm^0_3$, then:
	\begin{enumerate}[label=(\arabic*),font=\upshape]
		\item some group with \STRP is not locally virtually free;
		\item $\mathsf{STRP}$ is not contained in any countable union of
		isomorphism classes of finitely generated groups.
	\end{enumerate}
	Consequently, either some group with \STRP is not finitely generated,
	or there are uncountably many finitely generated groups with \STRP
	up to isomorphism. In the latter case, some finitely generated group
	with \STRP admits no recursive presentation. In particular, some
	non-finitely-presented group has \STRP.
\end{cor}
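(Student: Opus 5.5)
Each conclusion will be proved by contraposition, using the $\bfsgm^0_3$ computations from Proposition~\ref{prop:sigma03-loci}(1) and Corollary~\ref{cor:vf-complete}, together with the identity $\mathsf{STRP}\cap\mathsf{LVF}=\mathsf{VF}$ recorded just before Corollary~\ref{cor:rank-dichotomy}.

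\emph{Item (1).} Suppose every group with \STRP is locally virtually free. Then $\mathsf{STRP}\subseteq\mathsf{LVF}$, so
\[
\mathsf{STRP}=\mathsf{STRP}\cap\mathsf{LVF}=\mathsf{VF}.
\]
By Corollary~\ref{cor:vf-complete}, $\mathsf{VF}\in\bfsgm^0_3$, hence $\mathsf{STRP}\in\bfsgm^0_3$. This contradicts the hypothesis.

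\emph{Item (2).} Suppose $\mathsf{STRP}\subseteq\bigcup_{i}[H_i]_{\cong}$ for countably many finitely generated groups $H_i$. Since \STRP is an isomorphism invariant, each class $[H_i]_\cong$ is either contained in $\mathsf{STRP}$ or disjoint from it. Let $I$ be the set of indices $i$ with $H_i$ having \STRP. Then
\[
\mathsf{STRP}=\bigcup_{i\in I}[H_i]_{\cong}.
\]
This is a countable union of $\bfsgm^0_3$ sets by Proposition~\ref{prop:sigma03-loci}(1), so it is $\bfsgm^0_3$. Again this contradicts the hypothesis.

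\emph{Consequences.} Suppose every group with \STRP is finitely generated. Then the \STRP isomorphism types are all finitely generated. By (2) they cannot be countably many, since their classes would cover $\mathsf{STRP}$. So there are uncountably many finitely generated isomorphism types with \STRP.

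Recursive presentations are finite objects: a finite generating set together with an index of a c.e.\ set of relators. Hence there are only countably many of them, and only countably many isomorphism types of recursively presented groups. Therefore in the uncountable case some finitely generated \STRP group has no recursive presentation. Such a group is not finitely presented, because a finite presentation is in particular recursive.

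In the other case, some group with \STRP is not finitely generated, and hence is certainly not finitely presented. So in either case some non-finitely-presented group has \STRP.

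The only delicate point is justifying that \STRP is invariant under isomorphism, so that the covering union can be restricted to the \STRP classes. This is immediate from the definition via $\Act_G(\mathfrak C)$: an isomorphism $G\cong G'$ induces a conjugation-equivariant homeomorphism $\Act_G(\mathfrak C)\cong\Act_{G'}(\mathfrak C)$. The remaining steps are routine.
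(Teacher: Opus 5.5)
Your proposal is correct and follows essentially the same route as the paper. Item (1) goes through $\mathsf{STRP}=\mathsf{STRP}\cap\mathsf{LVF}=\mathsf{VF}\in\bfsgm^0_3$; item (2) restricts the covering union to the isomorphism classes having \STRP and applies Proposition~\ref{prop:sigma03-loci}(1); the consequences follow by counting the countably many recursively presentable isomorphism types. You also justify isomorphism invariance of \STRP explicitly and handle the not-finitely-generated case of the final assertion separately, both of which the paper leaves implicit.
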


\begin{proof}
	For (1), if every group with \STRP were locally virtually free, then we would have $ \mathsf{STRP}=\mathsf{VF} $, contradicting $\mathsf{VF}\in\bfsgm^0_3$.
	
	For (2), suppose that
	$\mathsf{STRP}\subseteq\bigcup_{n\in\omega}[H_n]_{\cong}$ for some
	finitely generated groups $H_n$. Since \STRP is invariant under group
	isomorphism,
	\[
	\mathsf{STRP}
	=
	\bigcup_{\substack{n\in\omega\\ H_n\text{ has }\STRP}}
	[H_n]_{\cong}.
	\]
	Each $[H_n]_{\cong}$ belongs to $\bfsgm^0_3$ by
	Proposition~\ref{prop:sigma03-loci}(1), so
	$\mathsf{STRP}\in\bfsgm^0_3$, contrary to the hypothesis.
	
	If every group with \STRP is finitely generated, (2) therefore gives
	uncountably many such isomorphism types. There are only countably many
	isomorphism types of finitely generated recursively presentable groups,
	so one of them admits no recursive presentation. Finally, a finitely
	presented group is finitely generated and recursively presentable.
\end{proof}

	\section{Recursive consequences}
	Carrasco-Vargas proved that a recursively presented finitely generated group
	admitting an SFT of nonzero Medvedev degree does not have \STRP
	\cite[Theorem~1.1]{carrasco2026}. A key ingredient is that every
	projectively isolated subshift over such a group has decidable language
	\cite[Proposition~3.1]{carrasco2026}. We extend this effective argument to
	quotients of $F_\omega$ by recursively enumerable normal subgroups, allowing
	countably many named generators. The effective compactness argument below is
	the countable-generator analogue of the finite-obstruction argument in
	\cite[Lemma~3.2]{carrasco2026}, while
	Proposition~\ref{prop:pi-decidable-countable} extends
	\cite[Proposition~3.1]{carrasco2026}.
	
	We use the standard terminology of recursively presented Polish spaces. Fix a recursive enumeration $q_n$ of the positive rational numbers. A \textbf{recursive presentation} of a Polish metric space $(X,d)$ is a dense sequence $(x_n)_{n\in\omega}$ such that the relations $d(x_i,x_j)<q_k$ and $d(x_i,x_j)\leq q_k$ on $\omega^3$ are recursive. A Polish metric space equipped with such a presentation is called \textbf{recursively presented}. The presentation determines an effective enumeration $(U_n^X)_{n\in\omega}$ of a basis of open balls. A subset of $X$ is \textbf{semirecursive} if it is an effectively enumerable union of members of this basis. For the discrete spaces used below, equipped with their fixed
	discrete-metric presentations, semirecursive subsets are exactly
	recursively enumerable subsets. If $X$ and $Y$ are recursively presented Polish spaces, a function $f:X\to Y$ is \textbf{recursive} if the relation $\{(n,x)\in\omega\times X:f(x)\in U_n^Y\}$ is semirecursive in $\omega\times X$.
	
	Give $F_\omega=\langle a_0,a_1,\ldots\rangle$ the discrete metric, and fix a recursive enumeration without repetitions of its finite reduced words as a recursive presentation of $F_\omega$. With this presentation, multiplication $F_\omega\times F_\omega\to F_\omega$, inversion $F_\omega\to F_\omega$, and substitution of finitely many elements of $F_\omega$ into a word are recursive. We use the term \textbf{recursively enumerable} for semirecursive subsets of $F_\omega$. Every finite alphabet is regarded as a recursively presented finite discrete space, and finite powers of these spaces and of $F_\omega$ carry their product recursive presentations.
	
	\begin{defn}\label{def:recursive-countable-presentation}
		A \textbf{recursive countable presentation} is a quotient $G_N=F_\omega/N$, where $N\triangleleft F_\omega$ is recursively enumerable. Let $\pi_N:F_\omega\to G_N$ be the quotient map. For $x\in A^{G_N}$, define $\widehat x=x\circ\pi_N:F_\omega\to A$, so $\widehat x(w)=x(wN)$. The configuration $x$ is \textbf{$N$-recursive} if $\widehat x$ is recursive.
	\end{defn}
	
	The definition does not require the word problem of $G_N$ to be recursive. The relation $\{(u,v)\in F_\omega^2:uN=vN\}$ is recursively enumerable, since $uN=vN$ if and only if $u^{-1}v\in N$, and the map $(u,v)\mapsto u^{-1}v$ is recursive.
	
	A \textbf{mass problem} is a subset of Baire space $\omega^\omega$; we use the same terminology for subsets of Cantor space. For mass problems $P,Q$, we say that $P$ is \textbf{Medvedev reducible} to $Q$, and write $P\leq_M Q$, if there is a partial recursive function $\Phi:\omega^\omega\rightharpoonup\omega^\omega$ such that $Q\subseteq\operatorname{dom}(\Phi)$ and $\Phi(y)\in P$ for every $y\in Q$. We write $P\equiv_M Q$ if $P\leq_M Q$ and $Q\leq_M P$; the equivalence classes under $\equiv_M$ are the \textbf{Medvedev degrees}. The least Medvedev degree consists exactly of the mass problems containing a recursive element. In particular, a nonempty subshift has Medvedev degree zero exactly when it contains a recursive configuration in the pullback; see \cite[Section~3]{barbiericarrasco2024} and \cite[Section~2.4]{carrasco2026}.
	
	For a finite symbolic code $c=(A,\bar f,\mathcal F)$, where $\bar f=(f_0,\ldots,f_{m-1})\in F_\omega^m$, a finite tuple $\bar k=(k_0,\ldots,k_{\ell-1})\in F_\omega^\ell$, and $q\in A^\ell$, write
	\begin{equation}\label{eq:s4-01}
		Y_c(N;\bar k,q)=\{x\in Y_c(N):(x(k_i))_{i<\ell}=q\}.
	\end{equation}
	
	Let $\mathscr P$ consist of all triples $(c,\bar k,q)$ such that
	$c=(A,\bar f,\mathcal F)$ is a finite symbolic code,
	$\bar k\in F_\omega^\ell$ for some $\ell\in\omega$, and
	$q\in A^\ell$. We equip $\mathscr P$ with the discrete metric and fix
	a recursive presentation induced by the fixed recursive presentation
	of $F_\omega$. With this presentation, the maps extracting
	$A,\bar f,\mathcal F,\bar k$ and $q$ from $(c,\bar k,q)$ are
	recursive, as are the length and coordinate maps on finite tuples and
	membership in finite families of patterns.
	
	\subsection{Effective consequences of projective isolation}
	
	The next lemma adapts the effective compactness argument of
	\cite[Lemma~3.2]{carrasco2026} to recursive countable presentations.
	
	\begin{lem}\label{lem:effective-compactness}
		Let $N\triangleleft F_\omega$ be recursively enumerable. The set
		$\{(c,\bar k,q)\in\mathscr P:Y_c(N;\bar k,q)=\emptyset\}$
		is recursively enumerable.
	\end{lem}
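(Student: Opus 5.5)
The approach is an effective compactness (König's lemma) argument, enumerating finite certificates of emptiness. Fix an enumeration of $F_\omega$ as $w_0,w_1,\ldots$ and, for $s\in\omega$, let $W_s=\{w_0,\ldots,w_{s-1}\}$. Let $N_s$ be the finite set of elements of $N$ enumerated by stage $s$. Given $(c,\bar k,q)$ with $c=(A,\bar f,\mathcal F)$, consider the finite set $T_s$ of coordinates consisting of all $w\bar k$-entries, the $k_i$, and all $w^{-1}f_i$ with $w\in W_s$. Define $\mathrm{Loc}_s(c,\bar k,q)$ to be the set of maps $x:T_s\to A$ such that:
\begin{enumerate}[label=(\roman*),font=\upshape]
\item $(x(k_i))_{i<\ell}=q$;
\item for every $w\in W_s$, $(x(w^{-1}f_i))_{i<m}\in\mathcal F$;
\item $x(u)=x(v)$ whenever $u,v\in T_s$ and $u^{-1}v\in N_s$.
\end{enumerate}
For each $s$ this is a finite, uniformly computable set of patterns, since the finitely many group operations are recursive and $N_s$ is finite. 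We enumerate $(c,\bar k,q)$ as soon as some $s$ has $\mathrm{Loc}_s(c,\bar k,q)=\emptyset$. This is a $\Sigma^0_1$ search, so the resulting set is recursively enumerable.

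Soundness is immediate. If $x\in Y_c(N;\bar k,q)$, then its restriction to $T_s$ satisfies (i)--(iii) for every $s$, because $N_s\subseteq N$ and the rules of $Y_c(N)$ hold at every translate. Hence if some $\mathrm{Loc}_s$ is empty, then $Y_c(N;\bar k,q)=\emptyset$.

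Completeness is the main step. Suppose $\mathrm{Loc}_s\neq\emptyset$ for all $s$. The sets $T_s$ increase and exhaust the relevant coordinates, and the conditions at stage $s+1$ restrict to those at stage $s$ (the sets $W_s$ and $N_s$ grow). So restriction maps $\mathrm{Loc}_{s+1}$ into $\mathrm{Loc}_s$. By König's lemma, or by compactness of $A^{F_\omega}$ applied to the closed sets $C_s=\{x\in A^{F_\omega}:x\uphar T_s\in\mathrm{Loc}_s\}$, which are nonempty and decreasing, there is $x$ in all $C_s$. To ensure every coordinate of $F_\omega$ eventually lies in some $T_s$, enlarge $T_s$ to contain $W_s$ as well; this adds no constraints, so the argument is unchanged.

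It remains to check that $x$ lies in $Y_c(N;\bar k,q)$. Condition (i) holds outright. Condition (ii) holds at every $w$, because $w\in W_s$ for large $s$. For condition (iii), given $u,v$ with $u^{-1}v\in N$, both $u$ and $v$ lie in $T_s$ and $u^{-1}v\in N_s$ for all large $s$, so $x(u)=x(v)$. Thus $x\in Y_c(N;\bar k,q)$, a contradiction. The only delicate point is this monotonicity and exhaustion bookkeeping, in particular that the $N$-identifications appear only at finite stages. Because we only need nonemptiness of a limit, and an enumerated element of $N$ is never later revoked, no non-r.e. information is required.
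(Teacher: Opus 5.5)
Your proof is correct and is essentially the paper's argument. Both use compactness of $A^{F_\omega}$ to turn emptiness of $Y_c(N;\bar k,q)$ into a finite certificate, consisting of finitely many translated local rules together with finitely many identifications $x(u)=x(ut)$ with $t\in N$. Both then enumerate such certificates, using that $N$ is recursively enumerable and that emptiness of a finite local pattern system is decidable.

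The only difference is bookkeeping. You run through a canonical increasing sequence of windows $T_s$ and enumeration stages $N_s$ and argue with nested nonempty closed sets, whereas the paper quantifies over arbitrary tuples $\bar u,\bar t,\bar w$ with $t_j\in N$ and uses a finite subcover. One small correction: enlarging $T_s$ by $W_s$ does add identifications through (iii), contrary to your remark. This is harmless, because those identifications hold at every point of $Y_c(N)$ and preserve monotonicity, so your argument is unaffected.
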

	
	\begin{proof}
		Fix $(c,\bar k,q)\in\mathscr P$, and write
		$c=(A,\bar f,\mathcal F)$,
		$\bar f=(f_0,\ldots,f_{m-1})$, and
		$\bar k=(k_0,\ldots,k_{\ell-1})$.
		For $u,t,w\in F_\omega$, define the clopen subsets of
		$A^{F_\omega}$ by
		$C_{u,t}=\{x:x(u)=x(ut)\}$,
		$L_{c,w}=\{x:(x(w^{-1}f_i))_{i<m}\in\mathcal F\}$, and
		$P_{\bar k,q}=\{x:(x(k_i))_{i<\ell}=q\}$.
		Then
		\begin{equation}\label{eq:s4-effective-intersection}
			Y_c(N;\bar k,q)
			=
			P_{\bar k,q}
			\cap
			\bigcap_{t\in N}\bigcap_{u\in F_\omega}C_{u,t}
			\cap
			\bigcap_{w\in F_\omega}L_{c,w}.
		\end{equation}
		
		For $r,s\in\omega$, $\bar u=(u_0,\ldots,u_{r-1})\in F_\omega^r$,
		$\bar t=(t_0,\ldots,t_{r-1})\in F_\omega^r$, and
		$\bar w=(w_0,\ldots,w_{s-1})\in F_\omega^s$, let
		$B(c,\bar k,q;\bar u,\bar t,\bar w)$ be the finite intersection
		\[
		P_{\bar k,q}\cap\bigcap_{j<r}C_{u_j,t_j}
		\cap\bigcap_{j<s}L_{c,w_j}.
		\]
		Each set occurring in \eqref{eq:s4-effective-intersection} is closed in the compact space $A^{F_\omega}$. Suppose that $Y_c(N;\bar k,q)=\emptyset$. Then the complements of $P_{\bar k,q}$, $C_{u,t}$ for $u\in F_\omega$ and $t\in N$, and $L_{c,w}$ for $w\in F_\omega$ form an open cover of $A^{F_\omega}$. By compactness, a finite subfamily already covers $A^{F_\omega}$. The converse is immediate from \eqref{eq:s4-effective-intersection}. Thus
		\begin{equation}\label{eq:s4-effective-certificate}
			\begin{aligned}
				Y_c(N;\bar k,q)=\emptyset
				\longleftrightarrow\
				&\exists r,s\in\omega\
				\exists\bar u\in F_\omega^r\
				\exists\bar t\in N^r\
				\exists\bar w\in F_\omega^s\\
				&\bigl[
				B(c,\bar k,q;\bar u,\bar t,\bar w)=\emptyset
				\bigr].
			\end{aligned}
		\end{equation}
		
		Let $\operatorname{Fn}(F_\omega,\omega)$ denote the recursively
		presented discrete space of finite partial functions from $F_\omega$
		to $\omega$. For $c=(A,\bar f,\mathcal F), r,s\in\omega$, $\bar u$,
		$\bar t$, and $\bar w$, define
		\[
		D=
		\{k_i:i<\ell\}
		\cup
		\{u_j,u_jt_j:j<r\}
		\cup
		\{w_j^{-1}f_i:j<s,\ i<m\}.
		\]
		By the compactness argument above, the set $ \{(c,\bar k,q)\in\mathscr P:Y_c(N;\bar k,q)=\emptyset\} $ equals
		\begin{equation}\label{eq:s4-effective-re}
			\begin{aligned}
				\bigcup_{r,s\in\omega}
				\bigcup_{\bar u\in F_\omega^r}
				\bigcup_{\bar t\in F_\omega^r}
				\bigcup_{\bar w\in F_\omega^s}
				\Biggl(
				\bigcap_{j<r}
				\Bigl\{
				&(c,\bar k,q)\in\mathscr P:t_j\in N
				\Bigr\}\\
				\cap
				\Biggl[
				\mathscr P\setminus
				\bigcup_{p\in\operatorname{Fn}(F_\omega,\omega)}
				\Biggl\{
				&(c,\bar k,q)\in\mathscr P:
				\operatorname{dom}(p)=D \
				\wedge \ 
				\operatorname{ran}(p)\subseteq A\\
				&\wedge \ 
				p\upharpoonright\bar k=q \ 
				\wedge \ 
				p\upharpoonright\bar u
				=
				p\upharpoonright(\bar u\bar t)\\
				&\wedge \ 
				\forall j<s\
				\Bigl[
				p\upharpoonright(w_j^{-1}\bar f)\in\mathcal F
				\Bigr]
				\Biggr\}
				\Biggr]
				\Biggr).
			\end{aligned}
		\end{equation}
		
		For fixed $r,s,\bar u,\bar t,\bar w$, the set in square brackets in
		\eqref{eq:s4-effective-re} is recursive, uniformly in these parameters.
		Indeed, the finite set $D$ is obtained recursively from
		$(c,\bar k,q,\bar u,\bar t,\bar w)$. For
		$p\in\operatorname{Fn}(F_\omega,\omega)$, each condition
		$\operatorname{dom}(p)=D$, $\operatorname{ran}(p)\subseteq A$,
		$p\upharpoonright\bar k=q$,
		$p\upharpoonright\bar u=p\upharpoonright(\bar u\bar t)$, and
		$\forall j<s\,[p\upharpoonright(w_j^{-1}\bar f)\in\mathcal F]$ is
		recursive in the displayed parameters. Moreover, the first two
		conditions restrict $p$ to the finite set $A^D$, which is obtained
		recursively from $A$ and $D$. The inner union over
		$p\in\operatorname{Fn}(F_\omega,\omega)$ is accordingly recursive, and
		so is its complement in $\mathscr P$.
		
		For each $r<\omega$ and $j<r$, the relation
		$\{(\bar t,c,\bar k,q)\in F_\omega^r\times\mathscr P:t_j\in N\}$ is
		recursively enumerable, so is
		its finite intersection. Finally, the outer unions in
		\eqref{eq:s4-effective-re} preserve recursive enumerability. So
		$\{(c,\bar k,q)\in\mathscr P:Y_c(N;\bar k,q)=\emptyset\}$ is
		recursively enumerable.
	\end{proof}

The next proposition is the recursive-countable analogue of
\cite[Proposition~3.1]{carrasco2026}. Its final assertion, producing an
$N$-recursive configuration from the decidable finite tuple-language, is
analogous to the computable-point consequence of
\cite[Proposition~3.3]{carrasco2026}.

\begin{prop}\label{prop:pi-decidable-countable}
	Let $G_N$ have a recursive countable presentation, and let
	$X\subseteq A^{G_N}$ be projectively isolated. Then
	\[
	\mathcal L_X = \left\{
	(\bar k,q):
	\exists \ell\in\omega, \bar k\in F_\omega^\ell,\
	q\in A^\ell,\
	\exists x\in X\
	[(x(k_iN))_{i<\ell}=q]
	\right\}
	\]
	is a recursive subset of
	$\bigcup_{\ell<\omega}F_\omega^\ell\times A^\ell$ with its natural recursive discrete presentation. Equivalently, the finite tuple-language of $X$ is decidable. Moreover, $X$ contains an $N$-recursive configuration.
\end{prop}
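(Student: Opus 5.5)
By Proposition~\ref{prop:projectively-isolated}, fix a globally realizable triple over $G_N$ and represent it by a finite symbolic code $d=(B,\bar f,\mathcal F)$ with $N\in\operatorname{GR}(d)$, together with a surjection $p:B\to A$. Write $Y=Y_d(N)$. The point of this choice is that every subshift $Z\subseteq Y$ with $Z_{\bar f}=\mathcal F$ satisfies $p(Z)=\pi_N^*(X)$. This is a fixed finite datum, so we may use it non-uniformly throughout the algorithm. Membership $(\bar k,q)\in\mathcal L_X$ is equivalent to $q\in p(Y)_{\bar k}$, which says that some $b\in p^{-1}(q)$ (coordinatewise) has $Y_d(N;\bar k,b)\neq\emptyset$.

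\emph{The complement is recursively enumerable.} If $(\bar k,q)\notin\mathcal L_X$, then $Y_d(N;\bar k,b)=\emptyset$ for each of the finitely many preimages $b$. By Lemma~\ref{lem:effective-compactness} this emptiness is r.e., so the complement of $\mathcal L_X$ is r.e.

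\emph{$\mathcal L_X$ itself is recursively enumerable.} This is where projective isolation, via Lemma~\ref{lem:finite-obstruction}, is used. Suppose $q\in p(Y)_{\bar k}$. Consider $Y^{\neg(\bar k,q)}$. Since $p(Y)=\pi_N^*(X)$ contains a point with pattern $q$ on $\bar k$ and condition (1) of Lemma~\ref{lem:finite-obstruction} holds, we get $(Y^{\neg(\bar k,q)})_{\bar f}\neq\mathcal F$. So some $a\in\mathcal F$ satisfies $Y_{d^{\neg(\bar k,q)}}(N;\bar f,a)=\emptyset$.

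Conversely, suppose $q\notin p(Y)_{\bar k}$. Then $Y^{\neg(\bar k,q)}=Y$, so its $\bar f$-language is $\mathcal F$ and no such $a$ exists.

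Thus $(\bar k,q)\in\mathcal L_X$ if and only if
\[
\exists a\in\mathcal F\ \bigl[Y_{d^{\neg(\bar k,q)}}(N;\bar f,a)=\emptyset\bigr].
\]
The code $d^{\neg(\bar k,q)}$ is computed recursively from $(\bar k,q)$, so another application of Lemma~\ref{lem:effective-compactness} shows this condition is r.e. An r.e.\ set with r.e.\ complement is recursive. This step is the crux: the finite-obstruction lemma converts "pattern occurs" into an emptiness certificate. I expect the main care to go into the degenerate cases, namely empty tuples and tuples with coordinates that coincide modulo $N$. In those cases $\mathcal F^{\neg q}$ still behaves correctly, because both directions above only use the definition \eqref{eq:s3-coding-04}.

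\emph{An $N$-recursive configuration.} Fix a recursive enumeration $w_0,w_1,\dots$ of $F_\omega$. Define $\widehat x(w_n)$ inductively: having values $q_n$ on $(w_0,\dots,w_{n-1})$ with $((w_0,\dots,w_{n-1}),q_n)\in\mathcal L_X$, choose the least $a\in A$ such that $((w_0,\dots,w_n),q_n{}^\frown a)\in\mathcal L_X$. Such an $a$ exists because the current pattern extends to a point of $X$. This recursion is recursive by decidability of $\mathcal L_X$.

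By compactness, $\widehat x$ lies in $\pi_N^*(X)$: each finite restriction occurs in $\pi_N^*(X)$, which is closed. In particular $\widehat x$ is constant on $N$-cosets, so it equals $x\circ\pi_N$ for some $x\in X$. That $x$ is $N$-recursive.
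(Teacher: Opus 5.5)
Your proposal is correct and follows essentially the same route as the paper. Both arguments use the projective-isolation witness code $d$, and both use Lemma~\ref{lem:effective-compactness} to make the complement of $\mathcal L_X$ recursively enumerable. Both certify membership through the codes $d^{\neg(\bar k,q)}$ via $\exists a\in\mathcal F\,[Y_{d^{\neg(\bar k,q)}}(N;\bar f,a)=\emptyset]$, and both build the $N$-recursive configuration by greedy least-symbol extension. The only cosmetic differences are that you cite Lemma~\ref{lem:finite-obstruction} where the paper re-derives that implication inline, and you use closedness of $\pi_N^*(X)$ where the paper uses the nested sets $C_n$.
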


\begin{proof}
	Choose a witness from Proposition~\ref{prop:projectively-isolated}. Thus there are a finite alphabet $B$, a finite set $F\subseteq G_N$, a globally realizable triple $(B,F,\mathcal F)$, and a surjection $p:B\to A$ such that, writing $W=[F,\mathcal F]_{G_N}$, one has $W_F=\mathcal F$, $p^{G_N}(W)=X$, and
	$U_F=\mathcal F\longrightarrow p^{G_N}(U)=X$
	for every $U\in\Sub_{G_N}(B)$.
	
	Enumerate $F=\{g_0,\ldots,g_{m-1}\}$ and choose $f_i\in F_\omega$ with $f_iN=g_i$ for $i<m$. Write $\bar f=(f_0,\ldots,f_{m-1})$ and identify $B^F$ with $B^m$ by $r\mapsto(r(g_i))_{i<m}$; under this identification, retain the notation $\mathcal F\subseteq B^m$. Let $d=(B,\bar f,\mathcal F)$ and $V=Y_d(N)$. Then $V=\pi_N^*(W)$. Write $p$ also for the coordinatewise map $B^{F_\omega}\to A^{F_\omega}$, and let $\widehat X=\pi_N^*(X)$. Since $p\circ\pi_N^*=\pi_N^*\circ p^{G_N}$, the witness gives
	\begin{equation}\label{eq:s4-02}
		V_{\bar f}=\mathcal F,\
		p(V)=\widehat X,\
		Z_{\bar f}=\mathcal F\longrightarrow p(Z)=\widehat X
	\end{equation}
	for every subshift $Z\subseteq V$.

	For the finite tuple-language $\mathcal L_X$ in the statement, fix $\ell<\omega$, $\bar k\in F_\omega^\ell$, and $q\in A^\ell$. Let
	$\mathcal B_q=\{b\in B^\ell:(p(b_i))_{i<\ell}=q\}$ and
	$V_q=\{y\in V:(p(y(k_i)))_{i<\ell}=q\}$.
	Then $V_q=\bigcup_{b\in\mathcal B_q}Y_d(N;\bar k,b)$, and
	\begin{equation}\label{eq:s4-03}
		(\bar k,q)\notin\mathcal L_X \
		\longleftrightarrow \
		V_q=\emptyset \
		\longleftrightarrow \
		\forall b\in\mathcal B_q\
		[Y_d(N;\bar k,b)=\emptyset].
	\end{equation}
	The finite set $\mathcal B_q$ is obtained recursively from $q$. By Lemma~\ref{lem:effective-compactness}, the relation $Y_d(N;\bar k,b)=\emptyset$ is recursively enumerable in $(\bar k,b)$. Finite intersections of recursively enumerable relations are recursively enumerable, so \eqref{eq:s4-03} shows that the complement of $\mathcal L_X$ is recursively enumerable.
	
	Let $d^{\neg(\bar k,q)}$ be the code defined in \eqref{eq:s3-coding-04}, and write
	$V^{\neg(\bar k,q)}=Y_{d^{\neg(\bar k,q)}}(N)$. Finite tuple concatenation and membership in the finite family $\mathcal F^{\neg q}$ are recursive, so the map $(\bar k,q)\mapsto d^{\neg(\bar k,q)}$ is recursive. We claim that
	\begin{equation}\label{eq:s4-04}
		(\bar k,q)\in\mathcal L_X
		\longleftrightarrow
		(V^{\neg(\bar k,q)})_{\bar f}\neq\mathcal F.
	\end{equation}
	Suppose first that $(\bar k,q)\notin\mathcal L_X$. Then
	$\forall x\in X\ [(x(k_iN))_{i<\ell}\neq q]$, and hence
	$\forall \widehat x\in\widehat X\ [(\widehat x(k_i))_{i<\ell}\neq q]$.
	Since $\widehat X=p(V)$ is $F_\omega$-invariant, for every $y\in V$ and
	$w\in F_\omega$ one has $\sigma_{F_\omega}(w)p(y)\in\widehat X$, so
	\[
	(p(y(w^{-1}k_i)))_{i<\ell}
	=
	((\sigma_{F_\omega}(w)p(y))(k_i))_{i<\ell}
	\neq q.
	\]
	Thus every $y\in V$ satisfies the additional rules defining
	$V^{\neg(\bar k,q)}$. Therefore $V\subseteq V^{\neg(\bar k,q)}$; the reverse
	inclusion follows from the definition of $d^{\neg(\bar k,q)}$. Hence
	$V^{\neg(\bar k,q)}=V$, and \eqref{eq:s4-02} gives
	$(V^{\neg(\bar k,q)})_{\bar f}=V_{\bar f}=\mathcal F$.
	
	Conversely, suppose $(\bar k,q)\in\mathcal L_X$ and $(V^{\neg(\bar k,q)})_{\bar f}=\mathcal F$. Since $\mathcal F=V_{\bar f}\neq\emptyset$, the subshift $V^{\neg(\bar k,q)}$ is nonempty. Moreover, $V^{\neg(\bar k,q)}\subseteq V$, so \eqref{eq:s4-02} gives $p(V^{\neg(\bar k,q)})=\widehat X$. By the defining rules of $d^{\neg(\bar k,q)}$, no point of $p(V^{\neg(\bar k,q)})$ realizes $q$ on $\bar k$, whereas $(\bar k,q)\in\mathcal L_X$ says that $\widehat X$ contains such a point. This contradiction proves \eqref{eq:s4-04}.
	
	Since $V^{\neg(\bar k,q)}\subseteq V$, one has $(V^{\neg(\bar k,q)})_{\bar f}\subseteq\mathcal F$. Hence \eqref{eq:s4-04} is equivalent to
	\begin{equation}\label{eq:s4-05}
		(\bar k,q)\in\mathcal L_X
		\longleftrightarrow
		\exists a\in\mathcal F\
		[Y_{d^{\neg(\bar k,q)}}(N;\bar f,a)=\emptyset].
	\end{equation}
	For each $a\in\mathcal F$, Lemma~\ref{lem:effective-compactness} and the recursiveness of $(\bar k,q)\mapsto d^{\neg(\bar k,q)}$ show that the relation on the right of \eqref{eq:s4-05} is recursively enumerable. Since $\mathcal F$ is finite, $\mathcal L_X$ is recursively enumerable. By \eqref{eq:s4-03}, its complement is recursively enumerable as well. Thus $\mathcal L_X$ is recursive.
	
Let $(w_n)_{n<\omega}$ be the recursive enumeration of $F_\omega$ underlying its fixed recursive presentation, and fix the natural ordering of the finite alphabet $A$. Recursively define
\[
a_n=\min\{a\in A:
((w_0,\ldots,w_n),(a_0,\ldots,a_{n-1},a))\in\mathcal L_X\}.
\]
This is well defined. Indeed, if $(a_0,\ldots,a_{n-1})$ is realized by some $x\in X$ on $(w_0N,\ldots,w_{n-1}N)$, then $a=x(w_nN)$ gives an admissible extension. Since $\mathcal L_X$ is recursive and $A$ is finite, the sequence $(a_n)_{n<\omega}$ is recursive.

For $n<\omega$, let
$C_n=\{x\in X:\forall i\leq n\ [x(w_iN)=a_i]\}$.
Each $C_n$ is nonempty and closed, and $C_{n+1}\subseteq C_n$. Compactness of $X$ gives
$\bigcap_{n<\omega}C_n\neq\emptyset$.
Choose $x$ in this intersection. Then $\widehat x(w_n)=a_n$ for every $n<\omega$. Since $(w_n)$ is the recursive presentation of the discrete space $F_\omega$ and $(a_n)$ is recursive, the map $\widehat x:F_\omega\to A$ is recursive. Thus $x$ is $N$-recursive.
\end{proof}

	\begin{thm}\label{thm:recursive-countable-obstruction}
		Let $G_N$ have a recursive countable presentation and \STRP. Then every nonempty $G_N$-SFT contains an $N$-recursive configuration.
	\end{thm}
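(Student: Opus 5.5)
The plan is to combine Doucha's density characterization with Proposition~\ref{prop:pi-decidable-countable}, using the SFT property to pass a configuration from a nearby projectively isolated subshift back into the given SFT.

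\textbf{Setup.} Let $X=[E,\mathcal E]_{G_N}\subseteq A^{G_N}$ be a nonempty SFT. First reduce to $|A|\geq2$: if $|A|=1$, then $X$ is the single constant configuration, which is trivially $N$-recursive. Otherwise we may also enlarge $A$ by a dummy symbol, since $X$ is unchanged as a subset. By Fact~\ref{fact:normalization}, we may assume $X=[E,X_E]_{G_N}$, so $\mathcal E=X_E$.

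\textbf{Passing to a projectively isolated subshift.} Since $G_N$ has \STRP, \cite[Theorem~3.1]{doucha2024} makes projectively isolated subshifts dense in $\Sub_{G_N}(A)$. Choose a projectively isolated $Y\in\mathcal N_X^E$, so $Y_E=X_E=\mathcal E$. The key observation is that $Y\subseteq X$. Indeed, for $y\in Y$ and $g\in G_N$, we have $\sigma(g)y\in Y$, hence $(\sigma(g)y)\uphar E\in Y_E=\mathcal E$, which gives $y\in[E,\mathcal E]_{G_N}=X$. (Equivalently, one may apply Theorem~\ref{thm:finite-characterization} to the globally realizable triple $(A,E,\mathcal E)$ and take $Y=p^{G_N}([F,\mathcal F]_{G_N})$.)

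\textbf{Extracting the configuration.} Proposition~\ref{prop:pi-decidable-countable} applies to $Y$, because $G_N$ has a recursive countable presentation and $Y$ is projectively isolated. It yields an $N$-recursive configuration $y\in Y\subseteq X$, which proves the theorem.

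\textbf{Main obstacle.} The only delicate point is making sure the approximating projectively isolated subshift lies inside the SFT, rather than merely agreeing with it on a window. This is exactly where the finite-type hypothesis enters: agreement of the $E$-language with the defining window $E$ forces containment. The rest is a direct citation of the earlier results, together with the trivial bookkeeping for the degenerate alphabet $|A|=1$.
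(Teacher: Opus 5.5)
Your proof is correct and follows the paper's argument: Doucha's density theorem gives a projectively isolated subshift $Y$ in a basic neighborhood of the SFT $X$ that forces $Y\subseteq X$, and Proposition~\ref{prop:pi-decidable-countable} then produces an $N$-recursive configuration in $Y$. The only difference is that you prove the containment yourself, from $X=[E,X_E]_{G_N}$ and $Y_E=X_E$, whereas the paper cites \cite[Lemma~2.14]{doucha2024} for it.
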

	
	\begin{proof}
		Let $X\subseteq A^{G_N}$ be a nonempty SFT. If $|A|=1$, then $X=A^{G_N}$ and its unique configuration is $N$-recursive. Assume $|A|\geq2$.
		
		By \cite[Lemma~2.14]{doucha2024}, there is a finite $F\subseteq G_N$ such that every subshift in $\mathcal N_X^F$ is contained in $X$. Since $G_N$ has \STRP, projectively isolated subshifts are dense in $\Sub_{G_N}(A)$ by \cite[Theorem~3.1]{doucha2024}. Choose a projectively isolated $Y\in\mathcal N_X^F$. Then $Y\subseteq X$, and Proposition~\ref{prop:pi-decidable-countable} gives an $N$-recursive configuration in $Y$, hence in $X$.
	\end{proof}

	\subsection{Subgroup obstructions}
	
	We now turn the preceding effective obstruction into obstructions coming
	from finitely generated subgroups. Let
	$H=\langle u_0N,\ldots,$ $u_{m-1}N\rangle$   $\leq G_N$, where
	$\bar u=(u_0,\ldots,u_{m-1})\in F_\omega^m$. Write
	$F_m=\langle s_0,\ldots,s_{m-1}\rangle$, and let
	$\rho_{\bar u}:F_m\to H$ be the epimorphism defined by
	$\rho_{\bar u}(s_i)=u_iN$. If
	$\tau_{\bar u}:F_m\to F_\omega$ is defined by
	$\tau_{\bar u}(s_i)=u_i$, then
	$K_{N,\bar u}:=\ker(\rho_{\bar u})=\tau_{\bar u}^{-1}(N)$.
	Since $\tau_{\bar u}$ is recursive, $K_{N,\bar u}$ is recursively enumerable whenever $N$ is recursively enumerable, uniformly in $\bar u$. A configuration $x\in A^H$ is called \emph{$K_{N,\bar u}$-recursive} if the pullback
	$x\circ\rho_{\bar u}:F_m\to A$ is recursive.
	
	\begin{cor}\label{cor:recursive-subgroup-obstruction}
		Let $G_N$ have a recursive countable presentation, and let
		$H=\langle u_0N,\ldots,u_{m-1}N\rangle\leq G_N$. If $H$ admits a nonempty SFT containing no $K_{N,\bar u}$-recursive configuration, then $G_N$ does not have \STRP.
	\end{cor}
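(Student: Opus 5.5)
We argue by contraposition through Theorem~\ref{thm:recursive-countable-obstruction}. Suppose $G_N$ has \STRP, and let $X=[E,\mathcal E]_H\subseteq A^H$ be a nonempty $H$-SFT, with $E\subseteq H$ finite. We will show that $X$ contains a $K_{N,\bar u}$-recursive configuration. The plan is to pass to the free extension $X^{\uparrow}=[E,\mathcal E]_{G_N}$, which is a nonempty $G_N$-SFT by Lemma~\ref{lem:free-extension}. Theorem~\ref{thm:recursive-countable-obstruction} then supplies an $N$-recursive configuration $y\in X^{\uparrow}$, and the goal becomes transporting this recursiveness back to $H$.

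Lemma~\ref{lem:free-extension} uses a transversal $L$ for $G_N/H$. We choose $1_{G_N}$ as the representative of the coset $H$ itself. With this choice, $y\in X^{\uparrow}$ gives $v:=y\uphar H\in X$ via the slice at $\ell=1$. (If $|A|=1$ the result is trivial, matching the first case of Theorem~\ref{thm:recursive-countable-obstruction}.)

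It remains to check that $v$ is $K_{N,\bar u}$-recursive, i.e.\ that $v\circ\rho_{\bar u}:F_m\to A$ is recursive. For $w\in F_m$ we have $\rho_{\bar u}(w)=\tau_{\bar u}(w)N$, so
\[
v(\rho_{\bar u}(w))=y(\tau_{\bar u}(w)N)=\widehat y(\tau_{\bar u}(w)).
\]
Thus $v\circ\rho_{\bar u}=\widehat y\circ\tau_{\bar u}$. This is a composition of the recursive map $\tau_{\bar u}$, which substitutes the fixed words $u_i$ into a word, with the recursive map $\widehat y$, and is therefore recursive. This contradicts the hypothesis on $H$, so $G_N$ does not have \STRP.

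The main obstacle is modest: one must check that the free-extension lemma applies with $H$ presented as a subgroup of $G_N$ rather than intrinsically. The window $E$ lives in $H$ and is represented in $F_\omega$ by words in $u_0,\dots,u_{m-1}$, so $X^{\uparrow}$ is an honest $G_N$-SFT. One must also check that restriction to the identity coset lands in $X$ exactly. Both follow from \eqref{eq:s2-01}. No uniformity in $\bar u$ is needed, since $\bar u$ is fixed.
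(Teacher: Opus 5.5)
Your proof is correct and follows essentially the same route as the paper. You pass to the free extension $X^{\uparrow}=[E,\mathcal E]_{G_N}$, use Lemma~\ref{lem:free-extension} to get $y\uphar H\in X$, note that $(y\uphar H)\circ\rho_{\bar u}=\widehat y\circ\tau_{\bar u}$ is recursive, and close with Theorem~\ref{thm:recursive-countable-obstruction}. The only difference is cosmetic: you argue by contraposition (\STRP{} yields an $N$-recursive point of $X^{\uparrow}$), whereas the paper first shows that $X^{\uparrow}$ has no $N$-recursive configuration and then applies the theorem directly.
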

	
	\begin{proof}
		Let $X=[F,\mathcal F]_H\subseteq A^H$ be such an SFT, and let
		$X^\uparrow=[F,\mathcal F]_{G_N}$ be its free extension. By
		Lemma~\ref{lem:free-extension}, $X^\uparrow$ is nonempty and
		$y\uphar H\in X$ for every $y\in X^\uparrow$.
		
		Suppose that $y\in X^\uparrow$ is $N$-recursive, and let
		$x=y\uphar H$. For every $v\in F_m$,
		$(x\circ\rho_{\bar u})(v)
		=y(\tau_{\bar u}(v)N)
		=\widehat y(\tau_{\bar u}(v))$.
		Since $\widehat y$ and $\tau_{\bar u}$ are recursive,
		$x\circ\rho_{\bar u}$ is recursive. Thus $x$ is
		$K_{N,\bar u}$-recursive, contrary to the choice of $X$.
		Therefore $X^\uparrow$ contains no $N$-recursive configuration.
		Theorem~\ref{thm:recursive-countable-obstruction} now implies that
		$G_N$ does not have \STRP.
	\end{proof}
	
	\begin{cor}\label{cor:recursive-product-obstruction}
		Let $G_N$ have a recursive countable presentation. If $G_N$ contains a subgroup isomorphic to $H_1\times H_2$, where $H_1$ and $H_2$ are infinite finitely generated groups, then $G_N$ does not have \STRP.
	\end{cor}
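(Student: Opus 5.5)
We would reduce to Corollary~\ref{cor:recursive-subgroup-obstruction}. Let $H_1,H_2\leq G_N$ be infinite finitely generated subgroups, commuting and intersecting trivially, with $\langle H_1,H_2\rangle\cong H_1\times H_2$. Choose words $u_0,\dots,u_{a-1}$ whose cosets generate $H_1$ and $u_a,\dots,u_{m-1}$ whose cosets generate $H_2$. Put $\bar u=(u_0,\dots,u_{m-1})$ and $H=\langle u_iN\rangle\cong H_1\times H_2$. It then suffices to produce a nonempty $H$-SFT $X$ containing no $K_{N,\bar u}$-recursive configuration. Here $K_{N,\bar u}$ is recursively enumerable, but the word problem of $H$ need not be decidable.

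The input we would use is the simulation theorem of Barbieri and Carrasco-Vargas for direct products of two infinite finitely generated groups. We would use it in the form that gives, for finitely generated $H_1\times H_2$ with arbitrary word problem, a nonempty SFT whose Medvedev degree is nonzero relative to the presentation. This is the same source \cite{barbiericarrasco2024} already invoked in Remark~\ref{rem:sl34z}. Concretely, we would take an effectively closed $\bbz$-subshift (or a $\Pi^0_1$ class in $2^\omega$) with no recursive member, for instance a separating class for two recursively inseparable r.e.\ sets. We would then simulate it by an $H_1\times H_2$-SFT. The $H_1$-direction carries the ``time'' and the $H_2$-direction the ``tape''. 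Infinitude of both factors supplies bi-infinite geodesic-like rays along which the simulated sequences are written.

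The key step is verifying non-recursivity relative to $K_{N,\bar u}$. Suppose $x\in X$ has $x\circ\rho_{\bar u}$ recursive. From $x\circ\rho_{\bar u}$ we would recursively decode a point of the simulated $\Pi^0_1$ class. The decoding reads symbols along words in $F_m$ (for example powers along a fixed ray) and does not require deciding equality in $H$. This pulled-back configuration on $F_m$ is exactly what the Medvedev-degree statements of \cite{barbiericarrasco2024} are phrased in terms of, so we would get a recursive member of a class with none, a contradiction. Then Corollary~\ref{cor:recursive-subgroup-obstruction} gives that $G_N$ lacks \STRP.

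The main obstacle is matching hypotheses. The simulation result must apply to $H_1\times H_2$ with no assumption on the word problem, and the ``nonzero Medvedev degree'' there must be measured against pullbacks to the free group on the chosen generators, not against configurations on $H$ itself. If the cited theorem only covers recursively presented groups, we would fall back on constructing the SFT directly. We would choose an infinite ray $(g_n)$ in $H_1$, given by a recursive sequence of words. The SFT would force the $H_2$-rows to be constant along $H_1$-cosets except on a marked ray, with the simulated sequence enforced by local rules. Handling this uniformly without a decidable word problem is the delicate part.
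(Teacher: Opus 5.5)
Your proposal is correct and follows the paper's proof: reduce to Corollary~\ref{cor:recursive-subgroup-obstruction}, take from \cite[Theorem~5.8]{barbiericarrasco2024} a nonempty SFT $X$ on the subgroup $J\cong H_1\times H_2$ with nonzero Medvedev degree, and note that this degree is by definition the degree of the pullback $\{x\circ\rho_{\bar u}:x\in X\}\subseteq A^{F_m}$, so nonzero degree already means $X$ has no $K_{N,\bar u}$-recursive configuration. The paper applies the cited theorem directly, with no word-problem hypothesis, so your simulation sketch, explicit decoding step, and fallback construction are not needed.
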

	
	\begin{proof}
		Let $J\leq G_N$ be a subgroup isomorphic to $H_1\times H_2$, and identify
		$J$ with this direct product. Choose a finite generating tuple
		$(j_0,\ldots,j_{m-1})$ for $J$, and choose
		$u_0,\ldots,u_{m-1}\in F_\omega$ such that $u_iN=j_i$ for every $i<m$.
		Write $\bar u=(u_0,\ldots,u_{m-1})$, and let
		$\rho_{\bar u}:F_m\to J$ be the epimorphism defined by
		$\rho_{\bar u}(s_i)=j_i=u_iN$. Then
		$K_{N,\bar u}=\ker(\rho_{\bar u})$.
		
		By \cite[Theorem~5.8]{barbiericarrasco2024}, there is a nonempty
		$J$-SFT $X\subseteq A^J$ with nonzero Medvedev degree. By definition,
		the Medvedev degree of $X$ is the degree of its pullback
		\[
		\widehat X
		=
		\{x\circ\rho_{\bar u}:x\in X\}
		\subseteq A^{F_m}.
		\]
		Since a mass problem has degree zero exactly when it contains a recursive
		element, the nonzero degree of $X$ implies
		$\forall x\in X\ [x\circ\rho_{\bar u}\text{ is not recursive}]$.
		Thus $X$ contains no $K_{N,\bar u}$-recursive configuration.
		Corollary~\ref{cor:recursive-subgroup-obstruction} applies, so $G_N$
		does not have \STRP.
	\end{proof}
	
	\begin{rem}\label{rem:recursive-literature-status}
		For comparison with the finitely generated setting, Carrasco-Vargas works
		with finitely generated groups throughout \cite[Section~2.1]{carrasco2026}. In that setting,
		\cite[Proposition~3.1]{carrasco2026} is the finitely generated analogue of
		Proposition~\ref{prop:pi-decidable-countable}, while
		\cite[Theorem~1.1]{carrasco2026} gives the obstruction from a nonempty SFT of nonzero Medvedev degree. For a finitely generated ambient group, Corollary~\ref{cor:recursive-subgroup-obstruction} also follows from
		\cite[Corollary~4.4]{barbiericarrasco2024} together with that theorem.
		The direct-product input in Corollary~\ref{cor:recursive-product-obstruction} is
		\cite[Theorem~5.8]{barbiericarrasco2024}. The arguments above allow the ambient group to have countably many named generators.
	\end{rem}
	
	\begin{cor}\label{cor:slnq}
		For every $n\geq2$, the group $\mathrm{SL}_n(\bbq)$ does not have \STRP.
	\end{cor}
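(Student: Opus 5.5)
The plan is to apply Corollary~\ref{cor:recursive-product-obstruction} to $\mathrm{SL}_n(\bbq)$. Two things must be checked: that $\mathrm{SL}_n(\bbq)$ has a recursive countable presentation $F_\omega/N$ with $N$ recursively enumerable, and that it contains a subgroup isomorphic to $H_1\times H_2$ with $H_1,H_2$ infinite and finitely generated.

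For the presentation, I would fix a recursive enumeration $(g_k)_{k\in\omega}$ of $\mathrm{SL}_n(\bbq)$. This is possible because matrices with rational entries and determinant $1$ form a decidable subset of $\bbq^{n\times n}$. Then I would define the epimorphism $F_\omega\to\mathrm{SL}_n(\bbq)$ by $a_k\mapsto g_k$. Evaluating a reduced word is an exact computation in rational arithmetic, so membership of a word in the kernel $N$ is decidable. Hence $N$ is recursive, in particular recursively enumerable, and $\mathrm{SL}_n(\bbq)\cong G_N$ has a recursive countable presentation in the sense of Definition~\ref{def:recursive-countable-presentation}. Isomorphism invariance of \STRP lets us work with $G_N$.

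For the direct product, the case $n\geq3$ can use the unipotent elements $u=I+E_{12}$ and $v=I+E_{13}$ from Remark~\ref{rem:sl34z}. They generate $\bbz^2=\bbz\times\bbz$, and $\mathrm{SL}_n(\bbq)$ contains this copy through the block-diagonal embedding. The case $n=2$ is the main point, since $\mathrm{SL}_2(\bbz)$ is virtually free and contains no $\bbz^2$. Here I would use the fact that $\mathrm{SL}_2(\bbq)$ is not finitely generated but contains diagonal elements. Take primes $2,3$ and
\[
d_2=\begin{pmatrix}2&0\\0&1/2\end{pmatrix},\qquad d_3=\begin{pmatrix}3&0\\0&1/3\end{pmatrix}.
\]
These commute, and $d_2^ad_3^b=\mathrm{diag}(2^a3^b,2^{-a}3^{-b})$ is the identity only when $a=b=0$, by unique factorization. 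So $\langle d_2,d_3\rangle\cong\bbz\times\bbz$.

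The same diagonal construction also works for every $n\geq2$ via block embedding, so a single uniform argument covers all $n$. The only step needing care is confirming that $N$ is recursively enumerable. This is routine, and I expect no real obstacle beyond noting that rational matrix multiplication is computable.
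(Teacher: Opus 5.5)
Your proposal is correct and essentially matches the paper's proof: you give $\mathrm{SL}_n(\bbq)$ a recursive countable presentation via a recursive enumeration of its elements and exact rational arithmetic, which makes the kernel recursive. You then exhibit $\langle d_2,d_3\rangle\cong\bbz\times\bbz$ with $d_2=\operatorname{diag}(2,2^{-1},1,\ldots,1)$ and $d_3=\operatorname{diag}(3,3^{-1},1,\ldots,1)$ using unique factorization, and invoke Corollary~\ref{cor:recursive-product-obstruction}; the unipotent alternative for $n\geq3$ is unnecessary but harmless.
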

	
	\begin{proof}
		We first give $\mathrm{SL}_n(\bbq)$ a recursive countable presentation, which is routine.
		Exact arithmetic and equality in $\bbq$ are recursive, so the matrices in
		$\mathrm{SL}_n(\bbq)$ admit a recursive enumeration
		$(g_i)_{i<\omega}$. Let
		$\pi:F_\omega\to\mathrm{SL}_n(\bbq)$ be the epimorphism defined by
		$\pi(a_i)=g_i$. For a reduced word $w\in F_\omega$, exact rational matrix multiplication computes $\pi(w)$ and decides whether $\pi(w)=I_n$. Thus
		$\ker(\pi)$ is recursive, and in particular recursively enumerable. Hence
		$\mathrm{SL}_n(\bbq)$ has a recursive countable presentation.
		
		Also, it's well known that $ \bbz^2 $ is a subgroup. Let
		$d_2=\operatorname{diag}(2,2^{-1},1,\ldots,1)$ and
		$d_3=\operatorname{diag}(3,3^{-1},1,\ldots,1)$.
		These matrices commute, and for $r,s\in\bbz$,
		\[
		d_2^rd_3^s
		=
		\operatorname{diag}
		(2^r3^s,2^{-r}3^{-s},1,\ldots,1).
		\]
		If $d_2^rd_3^s=I_n$, then $2^r3^s=1$, which implies $r=s=0$ by unique factorization in $\bbq^\times$. Thus
		$\langle d_2,d_3\rangle\cong\bbz\times\bbz$.
		Corollary~\ref{cor:recursive-product-obstruction} gives the result.
	\end{proof}

	\section*{Acknowledgments}
	
	The finite-index ascent theorem, Theorem~\ref{thm:finite-index}, was obtained by the author in a one-shot interaction with ChatGPT 5.6 Sol Pro on 16 August 2026 at 13:10 UTC, while investigating the strong topological Rokhlin property for $\mathrm{SL}_2(\bbz)$. The initial prompt and response are reproduced in Appendix~\ref{app:chatgpt-transcript}. The shared conversation, with timestamps, is available at
	\[
	\text{\url{https://chatgpt.com/share/6a84a0c6-d440-83e8-bc75-8e668a1948b2}}.
	\]
	The proof was later reorganized in the globally realizable formalism. The higher-power and free-extension constructions have antecedents in \cite{bitar2024,carrollpenland2015}, and the forms used here are proved above. Other results in the paper were obtained through interactive prompting of ChatGPT 5.6 Sol Pro, and were subsequently checked and rewritten by the author.  All AI-assisted material was reviewed and revised by the author, who takes full responsibility for the mathematical content and the final version of the paper.
	
	Hui Xu subsequently posted \cite{xu2026}; arXiv version~1 was submitted on 17 August 2026 at 12:23:50 UTC. Xu's Theorem~7.3 proves the same finite-index-overgroup statement. Before this, Doucha asked whether \STRP is closed under commensurability and virtual isomorphism, and noted that even the virtually cyclic case was unknown \cite[Question~7.2]{doucha2024}. The interaction of 16 August was conducted without access to, citation of, or reliance on Xu's preprint, which had not yet been publicly posted. Accordingly, the independence claimed here means only that the author obtained the argument through this AI-assisted interaction independently of Xu's preprint; no broader claim about the provenance of the model's output is intended. We did not locate an earlier public source containing the same finite-index theorem.

	\appendix
	\section{Initial ChatGPT prompt and response}\label{app:chatgpt-transcript}
	
	For disclosure and chronology, we reproduce below the initial prompt submitted
	to ChatGPT 5.6 Sol Pro on 16 August 2026 and the corresponding one-shot response.
	The transcript is reproduced from the saved chat record without mathematical
	editing; the original Markdown notation is retained directly in the present source file.
	
	\subsection{Initial prompt}
	
	\phantom{a}
	
	\begin{lstlisting}[style=chattranscript]
		You are to solve the following research-level mathematics problem.
		
		[
		\boxed{\text{Prove that }\mathrm{SL}\_2(\mathbb Z)\text{ has the strong topological Rokhlin property.}}
		]
		
		Use the standard meaning of the strong topological Rokhlin property for countable group actions on the Cantor space: the relevant conjugation action on the Polish space of Cantor actions has a comeager orbit.
		
		Your objective is a complete, rigorous proof.
		
		This prompt deliberately gives you **no proposed proof roadmap**. Do not infer that any particular representation of the group, permanence principle, symbolic-dynamical characterization, generic-action framework, or other technique is intended. Discover the useful structure yourself.
		
		Do not begin your response by proposing a menu of possible approaches or by committing to a proof strategy. Start investigating the mathematics.
		
		Use web search whenever it is useful for checking definitions, locating theorems, determining the current literature status, reading proofs, following citations, or verifying that an apparent lemma is already known. Prefer primary mathematical sources. Verify exact hypotheses rather than relying on search snippets or remembered theorem statements.
		
		Do not ask me to choose a direction, approve a plan, or decide which approach you should pursue. Conduct the proof search autonomously.
		
		## Fundamental standard
		
		The desired output is not:
		
		- a literature survey;
		- a plausible sketch;
		- a reduction to an unproved statement;
		- a collection of possible approaches;
		- evidence that the theorem ought to hold;
		- a proof for a related group;
		- a proof conditional on a permanence property;
		- or a statement that one approach “should adapt.”
		
		The desired output is a proof in which every nontrivial dependency is either proved or cited from a source whose precise hypotheses have been checked.
		
		You may formulate new lemmas and prove them.
		
		You may prove a substantially more general theorem if doing so genuinely simplifies the argument.
		
		You may abandon any approach at any time.
		
		You may return to an abandoned approach if later work provides a genuinely new mechanism.
		
		Do not force the requested conclusion by inventing missing results.
		
		---
		
		## Do not pre-anchor the search
		
		Do not create an initial roadmap such as:
		
		> Approach 1: ...
		>
		> Approach 2: ...
		>
		> Approach 3: ...
		
		and then spend the rest of the investigation inside those categories.
		
		Do not begin by deciding which existing theorem “must” be generalized.
		
		Do not assume that the most obvious algebraic description of the group reveals the correct proof.
		
		Do not assume that a theorem for a related group should extend to this group.
		
		Do not assume that the formulation in which the problem was asked is the formulation in which it is easiest to solve.
		
		The mathematical directions of attack must arise from the investigation itself.
		
		When new information changes the apparent structure of the problem, allow it to generate entirely new approaches that were not contemplated earlier.
		
		---
		
		## Search broadly without becoming shallow
		
		Maintain genuine diversity in the proof search, but do not expose that diversity merely as a list of speculative ideas.
		
		An approach becomes worth retaining only after you have pushed it to concrete mathematics.
		
		For every serious line of investigation, rapidly seek something falsifiable and exact, for example:
		
		- a precisely stated lemma;
		- an explicit construction;
		- an equivalence;
		- a finite or infinitary combinatorial object;
		- an exact extension or lifting problem;
		- a diagram that must commute;
		- an explicit topology or neighborhood calculation;
		- a concrete obstruction;
		- a counterexample to an intermediate conjecture;
		- a verified theorem with exactly matching hypotheses.
		
		Do not count phrases such as
		
		> “one should be able to adapt the argument,”
		
		> “genericity should be preserved,”
		
		> “the finite part should not matter,”
		
		> “this seems to follow from Bass–Serre theory,”
		
		> “one can encode the compatibility condition,”
		
		or
		
		> “a Fraïssé argument may work”
		
		as mathematical progress.
		
		Whenever such a statement appears, immediately turn it into an exact assertion and either prove it, find a reference proving it, disprove it, or classify it as an unresolved bottleneck.
		
		---
		
		## Avoid local minima
		
		Continuously distinguish between two kinds of difficulty.
		
		A **technical bottleneck** is one for which the current approach provides a mechanism and the remaining work consists of carrying that mechanism through carefully.
		
		A **structural bottleneck** is one where the next required statement is essentially a new theorem and the current approach supplies no mechanism for proving it.
		
		Invest deeply in technical bottlenecks.
		
		Do not repeatedly attack a structural bottleneck using cosmetic reformulations of the same idea.
		
		If an approach reaches a structural bottleneck:
		
		1. record exactly what has been proved;
		2. isolate the exact missing implication;
		3. stop spending disproportionate effort on that formulation;
		4. preserve useful lemmas already obtained;
		5. reopen the approach only when another part of the investigation supplies genuinely new information.
		
		Sunk effort is not evidence that an approach deserves further effort.
		
		If several leading approaches converge to the same unresolved assertion, do not automatically interpret this as evidence that the assertion must be proved. It may instead indicate that all those approaches belong to the same local minimum.
		
		At that point deliberately search for a formulation in which that assertion never appears.
		
		---
		
		## Breadth reset
		
		Whenever the investigation has spent substantial effort without reducing the genuine logical distance to the theorem, perform a breadth reset.
		
		A breadth reset does **not** mean writing another list of vague approaches.
		
		It means temporarily discarding the currently dominant conceptual framing and re-examining the problem from its definitions and established facts.
		
		Ask questions such as:
		
		- What exactly would constitute a witness to the desired property?
		- What is the weakest intermediate statement that would already imply the theorem?
		- What parts of the current formulation are accidental?
		- Is there an equivalent formulation in another mathematical language?
		- Can the quantifiers be reorganized?
		- Can one construct the desired generic object directly?
		- Can one characterize failure of the property and rule failure out?
		- Is there a finite approximation principle hidden in the topology?
		- Is there a universal object whose existence would automatically give the result?
		- Is the difficult compatibility condition an artifact of the chosen coordinates?
		- Can the problem be transferred to an object where the group relations are easier to enforce?
		
		These are questions for generating new mathematics, not a list of prescribed approaches.
		
		After a breadth reset, push whichever new ideas arise to concrete lemmas before evaluating them.
		
		Repeat breadth resets whenever necessary.
		
		---
		
		## Preserve independent thinking
		
		Do not let one attractive partial solution determine how every other idea is interpreted.
		
		When two approaches are genuinely different, develop them independently far enough that each reveals its own natural obstruction.
		
		Only afterward compare them.
		
		If two independent investigations unexpectedly produce the same intermediate structure, that is useful evidence that the structure may be intrinsic.
		
		If a difficult intermediate lemma appears only because of one particular formulation and disappears in another, treat that as evidence that the original lemma may be avoidable.
		
		Do not turn an accidental feature of the first promising approach into a mandatory subproblem for the entire proof search.
		
		---
		
		## Literature use
		
		Search the current literature aggressively enough to avoid reproving known results or relying on false folklore.
		
		But do not allow the literature search to become the roadmap.
		
		In particular:
		
		- search for the exact target statement;
		- search alternative terminology;
		- follow references backward;
		- search papers citing the most relevant results;
		- check recent work;
		- inspect proofs, not only abstracts;
		- verify theorem hypotheses and definitions;
		- distinguish published results from remarks, questions, conjectures, and folklore;
		- check whether a purported permanence principle is actually established.
		
		If a known theorem solves the problem immediately, give the proof by correctly applying that theorem.
		
		If the literature stops just short of the target, do not automatically decide that extending the nearest theorem is the correct route. Treat the boundary of the literature as information, not as a prescribed research direction.
		
		---
		
		## Proof-distance discipline
		
		At regular intervals evaluate actual logical distance to the target theorem.
		
		Suppose the current argument has established
		
		[
		A\Longrightarrow B\Longrightarrow C
		]
		
		and the desired result is (T).
		
		Do not describe this as major progress unless the remaining implication
		
		[
		C\Longrightarrow T
		]
		
		is genuinely better understood than the original problem.
		
		A reduction is useful only if the reduced problem has additional exploitable structure.
		
		Be especially suspicious of reductions of the form
		
		[
		T \quad\Longleftarrow\quad L,
		]
		
		where (L) is a newly invented lemma whose proof appears to require essentially everything needed to prove (T).
		
		Such a lemma is a relabeling of the problem, not progress.
		
		---
		
		## Depth discipline
		
		Once an approach contains a genuinely promising mechanism, pursue it deeply.
		
		Do not abandon a route merely because some technical work is lengthy.
		
		Carry constructions through definitions.
		
		Check well-definedness.
		
		Check all group relations.
		
		Check all compatibility conditions.
		
		Check surjectivity when a factor map is used.
		
		Check nonemptiness when a subshift or auxiliary structure is introduced.
		
		Check that locally specified data globalize.
		
		Check all topology: openness, density, (G\_\delta) conditions, continuity, convergence, and conjugacy statements as applicable.
		
		Check the exact quantifiers required by the definition of STRP.
		
		Distinguish carefully between statements that look similar but are logically different—for example, a dense conjugacy class and a comeager conjugacy class.
		
		Whenever a proof invokes a general theorem, explicitly verify every hypothesis in the present setting.
		
		Do not hide the central difficulty inside phrases such as “standard,” “routine,” or “similarly.”
		
		---
		
		## Aggressive falsification
		
		Act as both prover and hostile referee throughout the investigation.
		
		Try to destroy every important intermediate claim before building further on it.
		
		For a proposed lemma:
		
		1. test trivial and degenerate cases;
		2. test finite analogues when meaningful;
		3. examine whether the quantifiers are in the correct order;
		4. check whether the conclusion is actually strong enough for the next step;
		5. search for known counterexamples to similar statements;
		6. examine whether an unstated finiteness, effectiveness, freeness, normality, centrality, compactness, or surjectivity assumption has entered;
		7. verify that the claimed implication concerns the actual group and actual topology in the problem.
		
		If a lemma fails, diagnose the structural reason rather than repairing examples one at a time.
		
		A failed lemma can be valuable if it reveals the invariant or obstruction that a successful proof must control.
		
		---
		
		## Use computation when it increases mathematical information
		
		If finite computation can test a proposed combinatorial construction, finite presentation calculation, automaton, pattern-extension rule, or other finite consequence, use it.
		
		The purpose of computation is primarily to:
		
		- falsify false intermediate statements quickly;
		- reveal missing constraints;
		- discover invariants;
		- test exact conjectures;
		- identify the correct formulation of a lemma.
		
		Do not treat finite experiments as proof of an infinitary assertion.
		
		---
		
		## Cross-pollination
		
		When one investigation produces a useful construction or invariant, ask whether it removes a bottleneck in another investigation.
		
		Transfer concrete mechanisms, not vague optimism.
		
		Examples of meaningful cross-pollination include:
		
		- a finite encoding discovered in one formulation resolving a compatibility problem in another;
		- a topological characterization converting an algebraic construction into genericity;
		- an obstruction theorem identifying precisely what a constructive argument must rule out;
		- an alternative presentation eliminating a relation that complicated a previous construction.
		
		Do not merge approaches merely because they use similar terminology.
		
		---
		
		## Convergence rule
		
		Do not converge onto one proof strategy simply because it currently looks best.
		
		Converge only when one route has acquired a chain of concrete, verified implications whose remaining gaps are technical rather than structural, or when independent lines of investigation provide strong mathematical reasons to focus on the same mechanism.
		
		Even after convergence, periodically ask:
		
		> Is the current hardest lemma genuinely necessary, or only necessary because of the route we chose?
		
		If the answer is unclear, briefly reopen the search space.
		
		---
		
		## No premature stopping
		
		Do not stop because:
		
		- the first natural approach fails;
		- several natural approaches fail;
		- a theorem you hoped to use does not exist;
		- the literature calls a related question open;
		- a plausible permanence property is unknown;
		- the required construction initially appears complicated;
		- the argument encounters a new lemma;
		- or the problem appears harder than expected.
		
		Those events should trigger redirection, not termination.
		
		Continue generating and testing genuinely new mathematical mechanisms.
		
		Do not ask me whether you should continue.
		
		---
		
		## Reliability requirements
		
		Never:
		
		- fabricate a citation;
		- fabricate a theorem number;
		- state an unverified permanence property as known;
		- silently strengthen a theorem;
		- infer a result merely from a finite-index or quotient relationship without justification;
		- confuse evidence with proof;
		- conceal a circular dependency;
		- claim that a construction works before checking the defining relations;
		- call a statement “standard” as a substitute for proving or citing it;
		- present a conjectural lemma as established.
		
		Whenever web sources disagree, inspect the primary sources.
		
		Whenever your memory and a source disagree, verify the source carefully and use the mathematically correct statement.
		
		---
		
		## Final-proof threshold
		
		Do not write the final proof merely because the argument has become plausible.
		
		Before declaring success, construct a dependency graph of the proof at the level of lemmas and external theorems and audit every leaf.
		
		Every leaf must be one of:
		
		1. a definition;
		2. an elementary fact proved in the solution;
		3. a theorem proved in the solution;
		4. an external theorem whose exact statement and hypotheses have been verified.
		
		There must be no leaf labeled:
		
		- “should follow”;
		- “presumably”;
		- “standard adaptation”;
		- “finite extensions should preserve this”;
		- “one expects”;
		- “the same argument works”;
		- “it is natural to believe”;
		- or an equivalent placeholder.
		
		Then perform two audits.
		
		### Forward audit
		
		Read the proposed proof from assumptions to conclusion and challenge every nontrivial implication.
		
		### Backward audit
		
		Begin with
		
		[
		\mathrm{SL}\_2(\mathbb Z)\text{ has STRP}
		]
		
		and recursively trace everything required for that conclusion backward until every dependency terminates in an established fact.
		
		The backward audit is specifically intended to detect a hidden theorem-strength gap buried inside a long argument.
		
		If either audit finds a gap, return to research mode.
		
		---
		
		## What to return if successful
		
		If you obtain a proof that survives both audits, give a clean mathematical exposition rather than a diary of the search process.
		
		Include:
		
		- the theorem;
		- necessary definitions;
		- precisely stated external results with citations;
		- all new lemmas;
		- complete proofs of the new lemmas;
		- the deduction of the theorem;
		- a concise final proof audit identifying the most delicate points.
		
		Do not include discarded approaches unless understanding one of them is necessary for checking the final proof.
		
		---
		
		## What to do if the response limit intervenes before success
		
		Your primary objective remains a complete proof.
		
		If you cannot finish before the current response limit, do not fill the remaining space with general discussion.
		
		Instead give a compact research checkpoint containing only:
		
		1. rigorously established results obtained so far;
		2. exact citations for external results actually used;
		3. the smallest currently unresolved dependencies;
		4. which dependencies have been shown unnecessary or false;
		5. enough exact mathematical state that the investigation can continue in the next message without reconstructing the whole search.
		
		Do not present a speculative roadmap for the continuation.
		
		In the next message, resume active proof search from that state rather than summarizing it again.
		
		---
		
		Begin now.
		
		Do not first give me a plan.
		
		Do not ask me questions.
		
		Do not enumerate possible proof strategies.
		
		Investigate, search, calculate, prove, falsify, redirect, and continue until you have a proof that survives adversarial verification or until the response limit forces a precise mathematical checkpoint.
	\end{lstlisting}
	
	\subsection{Initial response}
	
	\phantom{a}
	
	\begin{lstlisting}[style=chattranscript]
		# Theorem
		
		[
		\boxed{\mathrm{SL}\_{2}(\mathbb Z)\text{ has the strong topological Rokhlin property.}}
		]
		
		Let (C=2^{\mathbb N}) be the Cantor space and let
		
		[
		\operatorname{Act}(G,C)=\operatorname{Hom}\bigl(G,\operatorname{Homeo}(C)\bigr)
		]
		
		carry the subspace topology inherited from (\operatorname{Homeo}(C)^G). The group (\operatorname{Homeo}(C)) acts by conjugation:
		
		[
		(u\cdot \alpha)(g)=u\alpha(g)u^{-1}.
		]
		
		We prove that this action has a comeager orbit when (G=\mathrm{SL}\_{2}(\mathbb Z)).
		
		The sole external input is the following theorem of Kwiatkowska:
		
		> For every (m\geq 1), the diagonal conjugation action of
		> (\operatorname{Homeo}(C)) on (\operatorname{Homeo}(C)^m) has a comeager orbit.
		
		Equivalently, the free group (F\_m) has STRP, since evaluation on a free generating set is an equivariant homeomorphism
		
		[
		\operatorname{Act}(F\_m,C)\cong \operatorname{Homeo}(C)^m.
		]
		
		This equivalence is also explicitly noted in the paper. ([arXiv](https://arxiv.org/pdf/1104.3340 "https://arxiv.org/pdf/1104.3340"))
		
		Thus (F\_2) has STRP. The main point is to prove that STRP ascends from a finite-index subgroup.
		
		## 1. Finite-index ascent
		
		### Proposition 1
		
		Let (G) be a finitely generated countable group and let (H\leq G) have finite index. If (H) has STRP, then (G) has STRP.
		
		### Proof
		
		Put
		
		[
		L=G/H,
		]
		
		the finite set of left cosets.
		
		We first record the neighborhood basis that will be used. For a finite symmetric set (E\subseteq G), a finite clopen partition (\mathcal P) of (C), and (\rho\in\operatorname{Act}(G,C)), let
		
		# [ \mathcal N(\rho;E,\mathcal P)
		
		\left{
			\sigma\in\operatorname{Act}(G,C):
			\begin{array}{l}
				\text{for every }g\in E,\ x\in C,\\
				\sigma(g)x\text{ and }\rho(g)x
				\text{ lie in the same atom of }\mathcal P
			\end{array}
			\right}.
		]
		
		These sets form a neighborhood basis. Indeed, for fixed (g), the condition is equivalent to
		
		[
		\sigma(g)^{-1}(P)=\rho(g)^{-1}(P)
		\qquad(P\in\mathcal P),
		]
		
		and exact image conditions between fixed clopen sets are clopen in
		(\operatorname{Homeo}(C)). Conversely, using the usual uniform metric on homeomorphisms, a partition of sufficiently small mesh controls the forward maps; because (E) is symmetric, it also controls their inverses.
		
		The Boolean algebra of clopen subsets of (C) is countable: every clopen set is a finite union of basic cylinders. Hence there are only countably many finite labeled clopen partitions of (C).
		
		For every clopen partition
		
		[
		\mathcal Q=(Q\_\ell)\_{\ell\in L}
		]
		
		of (C), indexed by (L), with every (Q\_\ell\neq\varnothing), define
		
		# [ U\_{\mathcal Q}
		
		\left{
			\beta\in\operatorname{Act}(G,C):
			\beta(g)Q\_\ell=Q\_{g\ell}
			\text{ for every }g\in G,\ \ell\in L
			\right}.
		]
		
		We establish three facts.
		
		### 1.1. Each (U\_{\mathcal Q}) is clopen
		
		Choose a finite symmetric generating set (S) of (G). It is enough in the definition of (U\_{\mathcal Q}) to require
		
		[
		\beta(s)Q\_\ell=Q\_{s\ell}
		\qquad(s\in S,\ \ell\in L).
		]
		
		For fixed clopen (A,B\subseteq C), the set
		
		[
		{u\in\operatorname{Homeo}(C)\:u(A)=B}
		]
		
		is clopen. Therefore (U\_{\mathcal Q}), being a finite intersection of such conditions, is clopen in (\operatorname{Act}(G,C)).
		
		### 1.2. The union of the sets (U\_{\mathcal Q}) is dense
		
		Let
		
		[
		\rho\in\operatorname{Act}(G,C)
		]
		
		and consider a basic neighborhood
		
		[
		\mathcal N(\rho;E,\mathcal P).
		]
		
		Let (\mathcal R) be the common refinement
		
		# [ \mathcal R
		
		\mathcal P\vee
		\bigvee\_{g\in E}\rho(g)^{-1}\mathcal P.
		]
		
		Thus, whenever (x,y) lie in the same atom of (\mathcal R), they lie in the same atom of (\mathcal P), and for every (g\in E), the points (\rho(g)x,\rho(g)y) lie in the same atom of (\mathcal P).
		
		Consider the action (D) of (G) on (C\times L) given by
		
		[
		D(g)(x,\ell)=(\rho(g)x,g\ell).
		]
		
		For each atom (R\in\mathcal R), choose a homeomorphism
		
		[
		\theta\_R\:R\times L\longrightarrow R.
		]
		
		Such a homeomorphism exists explicitly. Every nonempty clopen subset of (2^{\mathbb N}) is a finite disjoint union of cylinder sets, each homeomorphic to (2^{\mathbb N}); and a finite disjoint union of copies of (2^{\mathbb N}) is again homeomorphic to (2^{\mathbb N}). For example, (C) has a clopen partition into any prescribed finite number of cylinders.
		
		Piece together the maps (\theta\_R) to obtain a homeomorphism
		
		[
		\theta\:C\times L\longrightarrow C
		]
		
		such that
		
		[
		\theta(x,\ell)\in R
		\qquad\text{whenever }x\in R\in\mathcal R.
		]
		
		Transport (D) to an action (\sigma) on (C):
		
		[
		\sigma(g)=\theta D(g)\theta^{-1}.
		]
		
		Set
		
		[
		Q\_\ell=\theta(C\times{\ell}).
		]
		
		Then (\mathcal Q=(Q\_\ell)\_{\ell\in L}) is a clopen partition and
		
		[
		\sigma(g)Q\_\ell=Q\_{g\ell},
		]
		
		so (\sigma\in U\_{\mathcal Q}).
		
		We check that (\sigma\in\mathcal N(\rho;E,\mathcal P)). Write
		
		[
		y=\theta(x,\ell).
		]
		
		By construction, (x) and (y) lie in the same atom of (\mathcal R). Hence, for (g\in E),
		
		[
		\rho(g)x\quad\text{and}\quad \rho(g)y
		]
		
		lie in the same atom of (\mathcal P). Moreover,
		
		# [ \sigma(g)y
		
		\theta(\rho(g)x,g\ell),
		]
		
		and (\theta(\rho(g)x,g\ell)) lies in the same atom of (\mathcal R), hence of (\mathcal P), as (\rho(g)x). Therefore (\sigma(g)y) and (\rho(g)y) lie in the same atom of (\mathcal P). This proves
		
		[
		\sigma\in\mathcal N(\rho;E,\mathcal P).
		]
		
		Consequently,
		
		[
		U:=\bigcup\_{\mathcal Q}U\_{\mathcal Q}
		]
		
		is dense. Since each (U\_{\mathcal Q}) is open, (U) is dense open.
		
		### 1.3. Inside every (U\_{\mathcal Q}), a comeager set belongs to one fixed (G)-orbit
		
		Fix representatives
		
		[
		t\_\ell\in G,\qquad t\_\ell H=\ell,
		]
		
		with (t\_H=e). Define the corresponding cocycle
		
		[
		c(g,\ell)=t\_{g\ell}^{-1}gt\_\ell\in H.
		]
		
		It satisfies
		
		# [ c(g\_1g\_2,\ell)
		
		c(g\_1,g\_2\ell)c(g\_2,\ell).
		\tag{1}
		]
		
		Fix (\mathcal Q). For every (\ell\in L), choose a homeomorphism
		
		[
		\eta\_\ell\:Q\_H\longrightarrow Q\_\ell,
		\qquad \eta\_H=\operatorname{id}\_{Q\_H}.
		]
		
		For (\beta\in U\_{\mathcal Q}), the set (Q\_H) is (H)-invariant. Define
		
		[
		\alpha\_\beta(h)=\beta(h)|\_{Q\_H},
		\qquad h\in H,
		]
		
		and, for (\ell\neq H),
		
		# [ u\_{\beta,\ell}
		
		\eta\_\ell^{-1}\circ
		\beta(t\_\ell)|\_{Q\_H}
		\in\operatorname{Homeo}(Q\_H).
		]
		
		We claim that
		
		[
		\Phi\_{\mathcal Q}\:U\_{\mathcal Q}\longrightarrow
		\operatorname{Act}(H,Q\_H)
		\times
		\operatorname{Homeo}(Q\_H)^{L\setminus{H}},
		]
		
		given by
		
		# [ \Phi\_{\mathcal Q}(\beta)
		
		\left(\alpha\_\beta,(u\_{\beta,\ell})\_{\ell\neq H}\right),
		\tag{2}
		]
		
		is a homeomorphism.
		
		To construct the inverse, let
		
		[
		\alpha\in\operatorname{Act}(H,Q\_H)
		]
		
		and choose (u\_\ell\in\operatorname{Homeo}(Q\_H)) for
		(\ell\neq H). Put
		
		[
		u\_H=\operatorname{id}*{Q\_H},*
		*\qquad*
		*j*\ell=\eta\_\ell u\_\ell\:Q\_H\longrightarrow Q\_\ell.
		]
		
		Define, for (x\in Q\_H),
		
		# [ \beta(g)(j\_\ell x)
		
		j\_{g\ell},\alpha(c(g,\ell))x.
		\tag{3}
		]
		
		Since the (Q\_\ell)'s form a finite clopen partition, this defines a homeomorphism of (C). By (1),
		
		[
		\begin{aligned}
			\beta(g\_1)\beta(g\_2)(j\_\ell x)
			&=
			j\_{g\_1g\_2\ell}
			\alpha(c(g\_1,g\_2\ell))
			\alpha(c(g\_2,\ell))x\\
			&=
			j\_{g\_1g\_2\ell}
			\alpha(c(g\_1g\_2,\ell))x\\
			&=
			\beta(g\_1g\_2)(j\_\ell x).
		\end{aligned}
		]
		
		Thus (\beta) is a (G)-action and belongs to (U\_{\mathcal Q}).
		
		For (h\in H),
		
		[
		c(h,H)=h,
		]
		
		so (\beta(h)|\_{Q\_H}=\alpha(h)). Furthermore,
		
		[
		c(t\_\ell,H)=e,
		]
		
		so
		
		[
		\beta(t\_\ell)|*{Q\_H}=j*\ell=\eta\_\ell u\_\ell.
		]
		
		Hence (3) is indeed inverse to (2). Conversely, starting with
		(\beta\in U\_{\mathcal Q}), we have
		
		[
		j\_\ell=\beta(t\_\ell)|\_{Q\_H},
		]
		
		and therefore, for (x\in Q\_H),
		
		[
		\begin{aligned}
			\beta(g)(j\_\ell x)
			&=\beta(gt\_\ell)x\\
			&=\beta(t\_{g\ell}c(g,\ell))x\\
			&=j\_{g\ell}\alpha\_\beta(c(g,\ell))x.
		\end{aligned}
		]
		
		The displayed formulas also prove continuity in both directions: for fixed (g), the restriction of (\beta(g)) to each clopen (Q\_\ell) is obtained by finitely many compositions and inversions in the relevant homeomorphism groups. This proves that (\Phi\_{\mathcal Q}) is a homeomorphism.
		
		Now let
		
		[
		\alpha\_\ast\in\operatorname{Act}(H,C)
		]
		
		have a comeager (H)-conjugacy orbit. Since (Q\_H) is a nonempty clopen Cantor space, transport this orbit to a comeager orbit
		
		[
		\mathcal O\_{\mathcal Q}
		\subseteq
		\operatorname{Act}(H,Q\_H).
		]
		
		Set
		
		# [ V\_{\mathcal Q}
		
		\Phi\_{\mathcal Q}^{-1}
		\left(
		\mathcal O\_{\mathcal Q}
		\times
		\operatorname{Homeo}(Q\_H)^{L\setminus{H}}
		\right).
		]
		
		Then (V\_{\mathcal Q}) is comeager in (U\_{\mathcal Q}).
		
		We next show that all the sets (V\_{\mathcal Q}), as (\mathcal Q) varies, are contained in one (G)-conjugacy orbit.
		
		Define the induced action (I\_\ast) of (G) on (L\times C) by
		
		# [ I\_\ast(g)(\ell,x)
		
		\bigl(g\ell,\alpha\_\ast(c(g,\ell))x\bigr).
		\tag{4}
		]
		
		The cocycle identity (1) shows that this is an action. Since (L) is finite and nonempty, (L\times C) is homeomorphic to (C); thus we may regard (I\_\ast) as a Cantor action of (G).
		
		Take (\beta\in V\_{\mathcal Q}). Write
		
		# [ \Phi\_{\mathcal Q}(\beta)
		
		\left(\alpha\_\beta,(u\_{\beta,\ell})\_{\ell\neq H}\right)
		]
		
		and put
		
		[
		j\_\ell=\eta\_\ell u\_{\beta,\ell}.
		]
		
		Since (\alpha\_\beta\in\mathcal O\_{\mathcal Q}), there is a homeomorphism
		
		[
		\varphi\:C\longrightarrow Q\_H
		]
		
		such that
		
		# [ \alpha\_\beta(h)\varphi
		
		\varphi\alpha\_\ast(h)
		\qquad(h\in H).
		]
		
		Define
		
		[
		K\:L\times C\longrightarrow C,
		\qquad
		K(\ell,x)=j\_\ell\varphi(x).
		]
		
		Then
		
		[
		\begin{aligned}
			\beta(g)K(\ell,x)
			&=
			\beta(g)j\_\ell\varphi(x)\\
			&=
			j\_{g\ell}
			\alpha\_\beta(c(g,\ell))\varphi(x)\\
			&=
			j\_{g\ell}
			\varphi\alpha\_\ast(c(g,\ell))x\\
			&=
			K I\_\ast(g)(\ell,x).
		\end{aligned}
		]
		
		Thus (\beta) is conjugate to (I\_\ast). Hence every (V\_{\mathcal Q}) is contained in the same (G)-conjugacy orbit, say (\mathcal O\_G).
		
		Finally,
		
		[
		\operatorname{Act}(G,C)\setminus\mathcal O\_G
		\subseteq
		\left(\operatorname{Act}(G,C)\setminus U\right)
		\cup
		\bigcup\_{\mathcal Q}
		\left(U\_{\mathcal Q}\setminus V\_{\mathcal Q}\right).
		\tag{5}
		]
		
		The first set on the right is nowhere dense because (U) is dense open. For each (\mathcal Q), the set
		(U\_{\mathcal Q}\setminus V\_{\mathcal Q}) is meager in the clopen subspace (U\_{\mathcal Q}), and hence is meager in the entire action space. There are only countably many (\mathcal Q). Therefore the right-hand side of (5) is meager.
		
		Consequently (\mathcal O\_G) is comeager, and (G) has STRP. ∎
		
		## 2. A finite-index free subgroup of (\mathrm{SL}\_2(\mathbb Z))
		
		Set
		
		[
		A=
		\begin{pmatrix}
			1&2\\
			0&1
		\end{pmatrix},
		\qquad
		B=
		\begin{pmatrix}
			1&0\\
			2&1
		\end{pmatrix},
		\qquad
		H=\langle A,B\rangle.
		]
		
		We prove that (H\cong F\_2) and that
		
		[
		[\mathrm{SL}\_2(\mathbb Z)\:H]=12.
		]
		
		### Proposition 2
		
		The matrices (A,B) freely generate a free group of rank (2).
		
		### Proof
		
		Let the matrices act on the real projective line
		
		[
		\mathbb P^1(\mathbb R)=\mathbb R\cup{\infty}
		]
		
		by Möbius transformations. Write (a,b) for the projective transformations induced by (A,B). Then
		
		[
		a(x)=x+2,
		\qquad
		b(x)=\frac{x}{2x+1}.
		]
		
		More generally, for every (n\in\mathbb Z),
		
		[
		A^n=
		\begin{pmatrix}
			1&2n\\
			0&1
		\end{pmatrix},
		\qquad
		B^n=
		\begin{pmatrix}
			1&0\\
			2n&1
		\end{pmatrix},
		]
		
		so
		
		[
		a^n(x)=x+2n,
		\qquad
		b^n(x)=\frac{x}{2nx+1},
		\qquad
		b^n(\infty)=\frac1{2n}
		\quad(n\neq 0).
		]
		
		Let
		
		[
		X=(-1,1),
		\qquad
		Y={x\in\mathbb R:|x|>1}\cup{\infty}.
		]
		
		These sets are disjoint. For (n\neq0),
		
		[
		a^n(X)\subseteq Y.
		\tag{6}
		]
		
		Also,
		
		[
		b^n(Y)\subseteq X.
		\tag{7}
		]
		
		Indeed, if (|x|>1), then
		
		[
		|2nx+1|
		\geq 2|n||x|-1
		\geq 2|x|-1
		
		> |x|,
		> ]
		
		and therefore
		
		[
		\left|\frac{x}{2nx+1}\right|<1.
		]
		
		Moreover (|1/(2n)|<1).
		
		We now verify directly that no nontrivial reduced word in (A,B) acts trivially projectively. Suppose first that the first and last syllables of a reduced word (w) belong to the same cyclic factor.
		
		If both are powers of (A), then, reading the word from right to left and using (6)–(7), one obtains
		
		[
		w(X)\subseteq Y,
		]
		
		so (w) is not the identity. If both are powers of (B), then similarly
		
		[
		w(Y)\subseteq X.
		]
		
		If the first and last syllables belong to different factors, suppose for example that the last syllable is (A^r). Choose (m\neq0,-r). Then the reduced form of
		
		[
		A^{-m}wA^m
		]
		
		begins and ends with a nonzero power of (A), so this conjugate is not the identity by the preceding case. Hence (w) itself is not the identity. The case in which the last syllable is a power of (B) is identical.
		
		Thus the projective transformations induced by (A,B) freely generate a free group. In particular, the matrices (A,B) themselves freely generate a free group:
		
		[
		H\cong F\_2.
		\tag{8}
		]
		
		The same argument also gives
		
		[
		H\cap{\pm I}={I},
		\tag{9}
		]
		
		because both (I) and (-I) act trivially on the projective line, while no nontrivial word in (A,B) does. ∎
		
		### Proposition 3
		
		Let
		
		# [ \Gamma(2)
		
		\ker\left(
		\mathrm{SL}\_2(\mathbb Z)
		\longrightarrow
		\mathrm{SL}\_2(\mathbb F\_2)
		\right).
		]
		
		Then
		
		[
		\Gamma(2)=H\langle -I\rangle
		]
		
		and
		
		[
		[\Gamma(2)\:H]=2.
		]
		
		### Proof
		
		Certainly (A,B\in\Gamma(2)), so (H\leq\Gamma(2)).
		
		Let
		
		[
		M=
		\begin{pmatrix}
			a&b\\
			c&d
		\end{pmatrix}
		\in\Gamma(2).
		]
		
		Then
		
		[
		a,d\text{ are odd},
		\qquad
		b,c\text{ are even},
		]
		
		and
		
		[
		\gcd(a,c)=1,
		\tag{10}
		]
		
		because every common divisor of (a,c) divides (ad-bc=1).
		
		Left multiplication by (A^n) or (B^n) changes the first column as follows:
		
		# [ A^n \binom ac
		
		# \binom{a+2nc}{c}, \qquad B^n \binom ac
		
		\binom a{c+2na}.
		\tag{11}
		]
		
		Suppose (c\neq0). Since (a) is odd and (c) is even,
		
		[
		|a|\neq |c|.
		]
		
		If (|c|>|a|), choose (n\in\mathbb Z) nearest to (-c/(2a)). Then
		
		[
		|c+2na|\leq |a|.
		]
		
		The left side is even and (|a|) is odd, so equality is impossible. Thus
		
		[
		|c+2na|<|a|.
		]
		
		After multiplication by (B^n), the maximum of the absolute values of the two first-column entries has strictly decreased.
		
		If (|a|>|c|), choose (n) nearest to (-a/(2c)). Then
		
		[
		|a+2nc|\leq |c|.
		]
		
		Now the left side is odd and (|c|) is even, so again equality is impossible:
		
		[
		|a+2nc|<|c|.
		]
		
		After multiplication by (A^n), the same maximum has strictly decreased.
		
		Repeating this process must terminate. The first entry remains odd, so it can never become (0). Therefore we eventually obtain some (W\in H) such that the first column of (WM) is
		
		[
		\binom{\varepsilon}{0},
		\qquad \varepsilon\in{\pm1};
		]
		
		here (10) gives (|\varepsilon|=1).
		
		Because (WM\in\Gamma(2)) and has determinant (1), it has the form
		
		[
		WM=
		\begin{pmatrix}
			\varepsilon&q\\
			0&\varepsilon
		\end{pmatrix},
		\qquad q\in2\mathbb Z.
		]
		
		If (\varepsilon=1), then
		
		[
		WM=A^{q/2}.
		]
		
		If (\varepsilon=-1), then
		
		[
		WM=-A^{-q/2}.
		]
		
		It follows that
		
		[
		M\in H\langle -I\rangle.
		]
		
		Thus
		
		[
		\Gamma(2)=H\langle -I\rangle.
		]
		
		Since (-I) is central and (9) gives
		(H\cap\langle -I\rangle={I}), the two right cosets are (H) and (H(-I)). Therefore
		
		[
		[\Gamma(2)\:H]=2.
		\tag{12}
		]
		
		∎
		
		### Proposition 4
		
		[
		[\mathrm{SL}\_2(\mathbb Z)\:H]=12.
		]
		
		### Proof
		
		Over (\mathbb F\_2), every nonzero determinant equals (1), so
		
		[
		\mathrm{SL}\_2(\mathbb F\_2)=\mathrm{GL}\_2(\mathbb F\_2).
		]
		
		The latter group has order
		
		[
		(2^2-1)(2^2-2)=3\cdot2=6.
		]
		
		Reduction modulo (2) is surjective. Indeed, the reductions of
		
		[
		S=
		\begin{pmatrix}
			0&-1\\
			1&0
		\end{pmatrix},
		\qquad
		T=
		\begin{pmatrix}
			1&1\\
			0&1
		\end{pmatrix}
		]
		
		act on the three nonzero vectors of (\mathbb F\_2^2) as two distinct transpositions:
		
		[
		\overline S:(1,0)\leftrightarrow(0,1),
		]
		
		fixing ((1,1)), while
		
		[
		\overline T:(0,1)\leftrightarrow(1,1),
		]
		
		fixing ((1,0)). These transpositions generate the full symmetric group on the three nonzero vectors. The action of
		(\mathrm{GL}\_2(\mathbb F\_2)) on those vectors is faithful, so the reductions of (S,T) generate all of
		(\mathrm{SL}\_2(\mathbb F\_2)).
		
		Consequently,
		
		[
		[\mathrm{SL}\_2(\mathbb Z):\Gamma(2)]=6.
		\tag{13}
		]
		
		Combining (12) and (13),
		
		# [ [\mathrm{SL}\_2(\mathbb Z)\:H]
		
		# [\mathrm{SL}\_2(\mathbb Z):\Gamma(2)] [\Gamma(2)\:H]
		
		# 6\cdot2
		
		12.
		
		]
		
		∎
		
		## 3. Conclusion
		
		By Proposition 2,
		
		[
		H=\langle A,B\rangle\cong F\_2.
		]
		
		Kwiatkowska’s theorem therefore implies that (H) has STRP.
		
		By Proposition 4, (H) has finite index (12) in
		
		[
		G=\mathrm{SL}\_2(\mathbb Z).
		]
		
		Moreover (G) is finitely generated: a finite generating set for (H), together with a finite set of coset representatives for (G/H), generates (G).
		
		Proposition 1 now applies and gives that (G) has STRP. Hence the conjugation action
		
		[
		\operatorname{Homeo}(C)\curvearrowright
		\operatorname{Act}\bigl(\mathrm{SL}\_2(\mathbb Z),C\bigr)
		]
		
		has a comeager orbit.
		
		[
		\boxed{\mathrm{SL}\_{2}(\mathbb Z)\text{ has the strong topological Rokhlin property.}}
		]
		
		## Proof audit
		
		The dependency graph is
		
		[
		\text{Kwiatkowska's ample-generics theorem}
		\Longrightarrow
		F\_2\text{ has STRP}
		]
		
		together with
		
		[
		H=\langle A,B\rangle\cong F\_2,
		\qquad
		[\mathrm{SL}\_2(\mathbb Z)\:H]=12,
		]
		
		and
		
		[
		\text{finite-index ascent}
		\Longrightarrow
		\mathrm{SL}\_2(\mathbb Z)\text{ has STRP}.
		]
		
		The delicate points are all accounted for:
		
		1. **Openness in the ascent argument:** finite generation is used exactly to make each (U\_{\mathcal Q}) a finite intersection of clopen conditions.
		2. **Density:** the product action (\rho\times(G\curvearrowright G/H)) is conjugated back to (C) by a homeomorphism that stays inside the atoms of a sufficiently fine clopen refinement, giving the required neighborhood calculation.
		3. **Single conjugacy class:** the chart (2) and inverse formula (3) show that all transition maps between coset pieces are inessential conjugacy coordinates; once the (H)-restriction is generic, the whole (G)-action is conjugate to the fixed induced action (4).
		4. **The subgroup calculation:** ping-pong handles every reduced word, including words whose first and last syllables lie in different factors; the descent inside (\Gamma(2)) is strictly decreasing because one coordinate is odd and the other even; and reduction modulo (2) is explicitly shown to be surjective.
		
		Thus every leaf of the argument is either proved above or is the precisely cited ample-generics theorem.
	\end{lstlisting}

\begin{thebibliography}{99}
		
		
		\bibitem{andre2021}
		Simon Andr\'e,
		\emph{Elementary subgroups of virtually free groups},
		Groups Geom. Dyn. \textbf{15} (2021), no.~4, 1523--1552,
		doi:10.4171/GGD/638.
		
		\bibitem{barbiericarrasco2024}
		Sebasti\'an Barbieri and Nicanor Carrasco-Vargas,
		\emph{Medvedev degrees of subshifts on groups},
		arXiv:2406.12777v2.
		
		
		\bibitem{bitar2024}
		Nicol\'as Bitar,
		\emph{Realizability of subgroups by subshifts of finite type},
		Groups Geom. Dyn. (2025), published online first,
		doi:10.4171/GGD/900,
		arXiv:2406.04132.
		
		\bibitem{carrollpenland2015}
		David Carroll and Andrew Penland,
		\emph{Periodic points on shifts of finite type and commensurability invariants of groups},
		New York J. Math. \textbf{21} (2015), 811--822.
		
		\bibitem{carson2012}
		J.~Carson, V.~Harizanov, J.~Knight, K.~Lange, C.~McCoy, A.~Morozov,
		S.~Quinn, C.~Safranski, and J.~Wallbaum,
		\emph{Describing free groups},
		Trans. Amer. Math. Soc. \textbf{364} (2012), no.~11, 5715--5728,
		doi:10.1090/S0002-9947-2012-05456-0.
		
		\bibitem{carrasco2026}
		Nicanor Carrasco-Vargas,
		\emph{The strong topological Rokhlin property and Medvedev degrees of SFTs},
		arXiv:2601.03501v1.
		
		\bibitem{dahmaniyaman2008}
		Fran\c{c}ois Dahmani and Asli Yaman,
		\emph{Symbolic dynamics and relatively hyperbolic groups},
		Groups Geom. Dyn. \textbf{2} (2008), no.~2, 165--184,
		doi:10.4171/GGD/35.
		
		\bibitem{doucha2024}
		Michal Doucha,
		\emph{Strong topological Rokhlin property, shadowing, and symbolic dynamics of countable groups},
		J. Eur. Math. Soc., published online 18 December 2024,
		doi:10.4171/JEMS/1584,
		arXiv:2211.08145.
		
		\bibitem{douchamelleraytsankov2026}
		Michal Doucha, Julien Melleray, and Todor Tsankov,
		\emph{Dense and comeager conjugacy classes in zero-dimensional dynamics},
		arXiv:2507.05474v2.
		
		\bibitem{kechris1995}
		Alexander S. Kechris,
		\emph{Classical descriptive set theory},
		Graduate Texts in Mathematics, vol.~156,
		Springer-Verlag, New York, 1995,
		doi:10.1007/978-1-4612-4190-4.
		
		\bibitem{kechrisrosendal2007}
		Alexander S. Kechris and Christian Rosendal,
		\emph{Turbulence, amalgamation, and generic automorphisms of homogeneous structures},
		Proc. Lond. Math. Soc. (3) \textbf{94} (2007), no.~2, 302--350,
		doi:10.1112/plms/pdl007.
		
		\bibitem{kwiatkowska2012}
		Aleksandra Kwiatkowska,
		\emph{The group of homeomorphisms of the Cantor set has ample generics},
		Bull. Lond. Math. Soc. \textbf{44} (2012), no.~6, 1132--1146,
		doi:10.1112/blms/bds039.
		
		\bibitem{lyndonschupp1977}
		Roger C. Lyndon and Paul E. Schupp,
		\emph{Combinatorial group theory},
		Ergebnisse der Mathematik und ihrer Grenzgebiete, vol.~89,
		Springer-Verlag, Berlin--New York, 1977,
		doi:10.1007/978-3-642-61896-3.
		
		\bibitem{minasyan2026}
		Ashot Minasyan,
		\emph{Property (LR) and an embedding theorem for virtually free groups},
		arXiv:2603.17596v2.
		
		
		
		
		\bibitem{xu2026}
		Hui Xu,
		\emph{Cayley-tree pseudo-orbit tracing: period subgroups and relative geometry},
		arXiv:2608.16483v1.
		
		
		
		
	\end{thebibliography}
\end{document}